\newtheorem{lemma}{Lemma}
\newtheorem{prop}[lemma]{Proposition}
\newtheorem{theorem}[lemma]{Theorem}
\newtheorem{example}[lemma]{Example}
\newcommand{\bc}{\mathcal{B}} 
\newcommand{\nv}{\mathbf{\nu}} 
\newcommand{\id}{\mathbf{I}_d} 
\newcommand{\au}{\tilde{u}} 
\newcommand{\gl}{\lambda_n}
\newcommand{\rgl}{\mathring{\lambda}_n}
\newcommand{\tgl}{\tilde{\lambda}_n}
\newcommand{\katgl}{k\tilde{\lambda}_n}
\newcommand{\rtgl}{\mathring{\tilde{\lambda}}_n}
\newcommand{\NN}{\mathbb{N}_0}
\newcommand{\N}{\mathbb{N}}
\newcommand{\Z}{\mathbb{Z}}
\newcommand{\R}{\mathbb{R}}
\newcommand{\wh}{\widehat}
\newcommand{\be}{\begin{equation}}
\newcommand{\ee}{\end{equation}}
\newcommand{\bchi}{\mathbbm{1}}
\renewcommand{\ge}{\geqslant}
\renewcommand{\le}{\leqslant}
\DeclareMathOperator*{\arginf}{arg\,inf}
\numberwithin{equation}{section}
\numberwithin{lemma}{section}
\begin{document}
	
	\title{Sharp Wavenumber-explicit Stability Bounds for 2D Helmholtz Equations}
	
	\author{Bin Han and Michelle Michelle}
	
	\thanks{Research supported in part by
		Natural Sciences and Engineering Research Council (NSERC) of Canada under Grant RGPIN-2019-04276, and Alberta Innovates and Alberta Advanced Education}
	
	\address{Department of Mathematical and Statistical Sciences,
		University of Alberta, Edmonton,\quad Alberta, Canada T6G 2G1.
		\quad {\tt bhan@ualberta.ca}
		\quad {\tt mmichell@ualberta.ca}
	}
	
	\makeatletter \@addtoreset{equation}{section} \makeatother
	
	\begin{abstract}
	Numerically solving	the 2D Helmholtz equation is widely known to be very difficult largely due to its highly oscillatory solution, which brings about the pollution effect. A very fine mesh size is necessary to deal with a large wavenumber leading to a severely ill-conditioned huge coefficient matrix. To understand and tackle such challenges, it is crucial to analyze how the solution of the 2D Helmholtz equation depends on (perturbed) boundary and source data for large wavenumbers. In fact, this stability analysis is critical in the error analysis and development of effective numerical schemes. Therefore, in this paper, we analyze and derive several new sharp wavenumber-explicit stability bounds for the 2D Helmholtz equation with inhomogeneous mixed boundary conditions: Dirichlet, Neumann, and impedance. We use Fourier techniques, the Rellich's identity, and a lifting strategy to establish these stability bounds. Some examples are given to show the optimality of our derived wavenumber-explicit stability bounds.
	\end{abstract}
	
	\keywords{Helmholtz equation, wavenumbers, stability estimate, Fourier series}
	
	\subjclass[2010]{35C09, 35J05, 35B35}
	\maketitle
	
	\pagenumbering{arabic}
	
	\section{Introduction and Motivations}
	In this paper, we shall consider the following 2D Helmholtz equation:
	\be \label{helmholtz}
	\mathcal{L}{u}:=\Delta  u + k^2 u = -f \quad \mbox{in} \quad \Omega:=(0,1)^2
	\ee
	with the following boundary conditions
	\be \label{helmholtz:bc}
	\begin{split}
		&\bc_1u  = g_{1} \quad \mbox{on} \quad \Gamma_{1}:=(0,1) \times \{0\},
		\qquad \bc_3u = g_{3} \quad \mbox{on} \quad \Gamma_{3}:=(0,1) \times \{1\},\\
		&\bc_2u  = g_{2} \quad \mbox{on} \quad \Gamma_{2}:=\{1\} \times (0,1),
		\qquad
		\bc_4u = g_{4} \quad \mbox{on} \quad \Gamma_{4}:=\{0\} \times (0,1),
	\end{split}
	\ee
where $k>0$ is a constant wavenumber,
$f \in L^{2}(\Omega)$ is the source term,
and $g_{j} \in L^{2}(\Gamma_{j})$ for $j=1,\dots,4$ are boundary data. The different boundary conditions are prescribed by boundary operators
	$\bc_1,\ldots,\bc_4$, which belong to one of
	the three boundary operators: $\id$ (i.e., $\id u=u$) for a Dirichlet boundary condition; $\frac{\partial }{\partial \nv}$ for a Neumann boundary condition; or $\frac{\partial }{\partial \nv}-ik\id$ for the impedance boundary condition, where $\nu$ is the outward normal vector. We shall assume that at least one impedance boundary condition is present. Without loss of generality, we assume that the impedance boundary condition is always imposed on $\Gamma_4$, i.e., $\bc_4=\frac{\partial }{\partial \nv}-i k \id$.
	More specifically, we are interested in the following boundary configurations
	\be \label{bc}
	\bc_1, \bc_3 \in \{\id, \tfrac{\partial }{\partial \nv}\},
	\quad
	\bc_2  \in \{\id, \tfrac{\partial }{\partial \nv}, \tfrac{\partial }{\partial \nv}-ik \id \},
	\quad \mbox{and}\quad
	\bc_4=\tfrac{\partial }{\partial \nv}-ik \id.
	\ee
	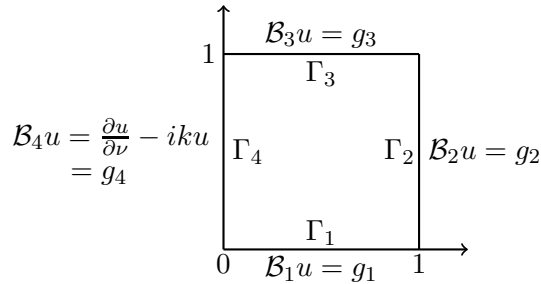
\begin{figure}[htbp]
		 \renewcommand{\figurename}{Figure}
		 \begin{tikzpicture}[yscale=0.65,xscale=0.65]
			\draw[thick] (0,0)--(4,0);
			\draw[thick] (4,0)--(4,4);
			\draw[thick] (4,4)--(0,4);
			\draw[thick] (0,4)--(0,0);
			\draw[thick,->] (0,4)--(0,5);
			\draw[thick,->] (4,0)--(5,0);
			\node (A) at (2,0.35) {$\Gamma_{1}$};
			\node (B) at (3.6,2) {$\Gamma_{2}$};
			\node (C) at (2,3.6) {$\Gamma_{3}$};
			\node (D) at (0.5,2) {$\Gamma_{4}$};
			\node (E) at (0,-0.3) {$0$};
			\node (F) at (2,-0.4) {$\mathcal{B}_{1}u=g_1$};
			\node (G) at (4,-0.3) {$1$};
			\node (H) at (2,4.4) {$\mathcal{B}_{3}u=g_3$};
			\node (I) at (5.35,2) {$\mathcal{B}_{2}u=g_2$};
			\node[align=left] (J) at (-2.3,2) {$\mathcal{B}_{4}u=\tfrac{\partial u}{\partial \nu}-iku$\\\qquad $=g_4$};
			\node (K) at (-0.3,4) {$1$};
		\end{tikzpicture}
		\caption{Boundary configuration in \eqref{helmholtz:bc} and \eqref{bc} for the 2D Helmholtz equation \eqref{helmholtz}.}\label{fig:config}
	\end{figure}

	See \cref{fig:config} for the domain and boundary configurations of the 2D Helmholtz equation  \eqref{helmholtz}--\eqref{bc}. The importance of understanding how the solutions of the Helmholtz equations behave with respect to the boundary data and the source term cannot be overstated. To the best of our knowledge, such configurations have not been studied in details and thus their stability bounds are not available except for one case, where three sides have the homogeneous Dirichlet conditions while one side has an inhomogeneous impedance boundary condition  \cite{DLS15}. Hence, our aim is to study the stability of the above 2D Helmholtz equation with inhomogeneous boundary conditions and derive several sharp wavenumber-explicit stability bounds that hold for all positive wavenumber $k$. By sharp, we mean that our stability bounds capture the leading $k$-dependent term in front of the norm of a given datum, which is accurate up to a constant multiple (independent of $k$ and the given datum). To achieve this goal, we shall also devise and rigorously justify a lifting strategy, which assists us in dealing with inhomogeneous horizontal boundary conditions. Furthermore, we shall give several examples to illustrate the sharpness of our stability bounds.
	
	Some of the above boundary configurations can be thought of as a simplification of electromagnetic scattering from a large cavity (e.g, see \cite{ABW02,BS05,BY16,BYZ12,LMS13}) or waveguide (e.g., see \cite{BG99, HK19, MMP12}) problems, where we approximate the nonlocal inhomogeneous boundary condition with an impedance boundary condition. In \cite{LMS13}, three sides have the homogeneous Dirichlet boundary conditions and one side has a nonlocal inhomogeneous boundary condition. Meanwhile, the authors in \cite{BY16} considered two different 2D Helmholtz models, where all three  sides have either the homogeneous Dirichlet or Neumann boundary conditions, and one side has a nonlocal inhomogeneous boundary condition. The model considered in \cite{HK19} has one inhomogeneous Dirichlet boundary condition, two homogeneous Neumann boundary conditions, and one high-order complete radiation boundary condition. Approximating a nonlocal boundary condition with an impedance boundary condition may change the stability behaviour \cite[Section 6]{DLS15} and it may be interesting to observe how it changes.
	
	Stability estimates for the 2D Helmholtz equations are fundamental in the error analysis and the development of various numerical schemes. There is a considerable amount of work in deriving such estimates. Many of these studies considered the interior impedance Helmholtz equation \cite{CF06,EM12,FS94,M95,S14} (i.e., only the impedance boundary condition is present in the problem). In general, obtaining a sharp wavenumber-explicit stability estimate is very challenging, since they are highly dependent on the domain and boundary configurations (e.g, \cite{EM12}). For example, we may compare the studies done by \cite{EM12,M95,S14}, which deal with a bounded Lipschitz domain. With an extra assumption that the domain is star-shaped with respect to a ball, \cite{M95} proved a stability estimate that is independent of the wavenumber $k$. In the absence of this star-shaped assumption (i.e., a general Lipschitz domain is considered), the stability estimate in \cite[Theorem 1.6]{S14} involves an extra factor $k^{\frac{1}{2}}$ in front of the boundary datum's norm and an extra factor $k$ in front of the source term's norm, and improves the stability estimate stated in \cite[Theorem 2.4]{EM12}. One of the most notable stability estimates for the interior heterogeneous Helmholtz problem with mixed boundary conditions can be found in \cite{GS20} (also see references therein for stability estimates of heterogeneous Helmholtz equations). They proved that the stability estimates may depend on the wavenumber (among other factors like the geometry of the domain and the coefficient bounds). They did not provide any explicit form of the dependence on the wavenumber $k$ except for the interior impedance problem. Another highly relevant study to our present work is \cite{H07}; in this paper, the author proved some stability estimates for the interior 2D Helmholtz equation on a bounded connected Lipschitz domain with mixed boundary conditions. Certain geometric assumptions were introduced yielding stability estimates that are independent of $k$. These same geometric assumptions were also used to prove a stability estimate for the heterogeneous Helmholtz equation in a subsequent work \cite{BGP17}. In the context of a rectangular domain, the geometric assumptions in \cite{H07} simplify into three cases: (1) all sides have impedance boundary conditions, (2) three sides have impedance boundary conditions and one side has a Dirichlet/Neumann boundary condition, or (3) two adjacent sides have Dirichlet/Neumann boundary conditions and the other two have impedance boundary conditions. By two adjacent sides, we mean two sides that are connected to each other; e.g., $\Gamma_{1}$ and $\Gamma_{2}$, or $\Gamma_{4}$ and $\Gamma_{1}$ in \eqref{helmholtz:bc}. Whenever a Dirichlet/Neumann boundary condition is imposed, \cite{H07} assumes that it is homogeneous. Hence, the boundary configurations in this paper are completely different from the assumptions used in \cite{H07}. This serves as a motivation for our present study. Moreover, as we shall see later on, our stability estimates necessarily depend on the wavenumber unlike \cite{H07}. The previous comparisons highlight the sensitivity of stability bounds with respect to boundary placements and conditions. Our results complement those in \cite{H07} for a rectangular domain, thereby offering a much more complete picture of the stability behaviour of the Helmholtz equation on a rectangular domain with mixed boundary conditions. The authors in \cite{DLS15} analyzed and numerically tested the optimal stability estimate for one particular boundary configuration we consider. While the above cited studies including our present work are concerned with interior Helmholtz problems, stability estimates for exterior Helmholtz problems (with an obstacle in the form of a bounded Lipschitz domain) can be found in for example \cite{GPS19,S14} and references therein. In the context of 1D Helmholtz equations, some stability estimates have also been derived in \cite{AKS88,GS20,IB95}, and a new numerical scheme is recently reported in \cite{HMW21} for handling arbitrarily large wavenumbers.
	
	The outline of the paper is as follows. In \cref{sec:wellposed}, we prove the existence and uniqueness of the solution to the 2D Helmholtz equation \eqref{helmholtz}--\eqref{bc}, state several sharp wavenumber-explicit stability bounds, propose a lifting strategy, and provide several examples to illustrate the optimality of our stability bounds. Finally, we present the technical proofs of several theorems in \cref{sec:proofthm}.
	
	\section{Main Results on Sharp Wavenumber-explicit Stability Bounds}
	\label{sec:wellposed}
	
	In this section, we shall study the existence and uniqueness of the solution to the Helmholtz equation \eqref{helmholtz}--\eqref{bc} and then derive several relevant sharp wavenumber-explicit stability bounds.
	
	Let $\Gamma_{D}$ be the union of all boundaries on which the Dirichlet condition is imposed (i.e., $u=g_{D}$ on $\Gamma_{D}$), $\Gamma_{N}$ be the union of all boundaries on which the Neumann condition is imposed (i.e., $\frac{\partial u}{\partial \nu} =g_{N}$ on $\Gamma_{N}$), and $\Gamma_{R}$ be the union of all boundaries on which the impedance boundary condition is imposed (i.e., $\frac{\partial u}{\partial \nu} -iku=g_{R}$ on $\Gamma_{R}$).
	
	Let $H^{s}(\Omega)$, where $s \ge 0$, be the classical Sobolev spaces of order $s$, whose norm is denoted by $\| \cdot \|_{s,\Omega}$. If $s=0$, then $H^{0}(\Omega):=L^{2}(\Omega)$, the space of square integrable complex functions over $\Omega$. The standard inner product and norm in $L^{2}(\Omega)$ are denoted by $\langle u, v \rangle_{\Omega}:= \int_{\Omega} u \overline{v}$ and $\| \cdot \|_{0,\Omega}:= \langle u, u \rangle_{\Omega}^{1/2}$.
	On the boundary, the standard inner product and norm in $L^{2}(\Gamma)$ are denoted by $\langle u, v \rangle_{\Gamma}:= \int_{\Gamma} u \overline{v}$ and $\| \cdot \|_{0,\Gamma}:= \langle u, u \rangle_{\Gamma}^{1/2}$. Define
	\[
	\mathcal{H}:=\{u \in H^{1}(\Omega)\; :\; u=0 \quad \text{on} \quad \Gamma_{D}\},
	\]
	where $\Gamma_{D}$ is allowed to be an empty set. If $\Gamma_{D}=\emptyset$, then $\mathcal{H}=H^{1}(\Omega)$. For the homogeneous Dirichlet boundary condition $u=g_D=0$ on $\Gamma_D$,
the weak formulation of the 2D Helmholtz equation \eqref{helmholtz}--\eqref{bc} is to find $u \in \mathcal{H}$ such that
	 \begin{equation}\label{weakform}
		a(u,v):=\int_{\Omega} (\nabla u \cdot \nabla \overline{v} - k^{2} u \overline{v}) - ik \int_{\Gamma_{R}}u \overline{v} =\int_{\Omega} f \overline{v} + \int_{\Gamma_{R}}g_{R}\overline{v} + \int_{\Gamma_{N}} g_{N}\overline{v} \qquad \forall\; v \in \mathcal{H}.
	\end{equation}
	The existence and uniqueness of the solution to problem \eqref{weakform} can be proved by using the Fredholm alternative and the unique continuation principle \cite[Theorem 2.1]{GS20}. For the convenience of the reader, we shall include an explicit proof for our problem to make the presentation self-contained.

	\begin{prop}
		There is a unique solution $u \in \mathcal{H}$ that satisfies the problem \eqref{weakform}.
	\end{prop}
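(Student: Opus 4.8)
The plan is to recast the weak formulation \eqref{weakform} as a second-kind operator equation on $\mathcal{H}$ and then apply the Fredholm alternative, so that existence follows once uniqueness is in hand. First I would split the sesquilinear form as $a(u,v)=a_0(u,v)-c(u,v)$, where
\[
a_0(u,v) := \int_\Omega \bigl(\nabla u\cdot\nabla\overline v + u\overline v\bigr), \qquad c(u,v) := (k^2+1)\int_\Omega u\overline v + ik\int_{\Gamma_{R}} u\overline v .
\]
The form $a_0$ is exactly the $H^1(\Omega)$ inner product, hence bounded and coercive on the closed subspace $\mathcal{H}$. By the Riesz representation theorem there is then a unique $F\in\mathcal{H}$ representing the conjugate-linear right-hand side functional $v\mapsto \int_\Omega f\overline v + \int_{\Gamma_{R}} g_{R}\overline v + \int_{\Gamma_{N}} g_{N}\overline v$, and a unique bounded operator $K:\mathcal{H}\to\mathcal{H}$ with $a_0(Ku,v)=c(u,v)$ for all $v$. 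With these identifications, \eqref{weakform} is equivalent to the equation $(I-K)u=F$ in $\mathcal{H}$.

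The structural point making this work is that $K$ is \emph{compact}. The volume term in $c$ is controlled by the compact Rellich embedding $H^1(\Omega)\hookrightarrow L^2(\Omega)$, while the boundary term is controlled by compactness of the trace map $H^1(\Omega)\to L^2(\partial\Omega)$ on the Lipschitz (indeed square) domain $\Omega$; composing these with the bounded Riesz isomorphism shows $K$ is compact. The Fredholm alternative for $I-K$ then guarantees that $I-K$ is boundedly invertible precisely when it is injective. Consequently, both existence and uniqueness reduce to the single claim that the homogeneous problem, $f=g_{R}=g_{N}=0$, admits only $u=0$.

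It remains to establish this uniqueness, and here I would exploit the impedance condition together with unique continuation. Testing the homogeneous equation with $v=u$ gives
\[
\int_\Omega\bigl(|\nabla u|^2-k^2|u|^2\bigr) - ik\int_{\Gamma_{R}}|u|^2 = 0 ,
\]
and taking imaginary parts (using $k>0$) forces $u=0$ on $\Gamma_{R}$; the impedance relation $\frac{\partial u}{\partial\nu}-iku=0$ then yields $\frac{\partial u}{\partial\nu}=0$ on $\Gamma_{R}$ as well. Since $\Gamma_{R}\supseteq\Gamma_4$ is nonempty and relatively open, and $u$ solves $\Delta u+k^2u=0$ with constant (hence real-analytic) coefficients, the vanishing of both Cauchy data on $\Gamma_{R}$ lets me invoke the unique continuation principle to conclude $u\equiv 0$ throughout the connected domain $\Omega$. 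I expect this final step to be the main obstacle: one must first justify enough regularity of the weak solution near $\Gamma_{R}$ for its Cauchy data to be meaningful, and then apply the unique continuation (Holmgren-type) result carefully, since this is exactly the ingredient that the abstract Fredholm machinery cannot supply on its own.
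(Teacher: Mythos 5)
Your overall strategy coincides with the paper's: recast \eqref{weakform} so that the Fredholm alternative (via G\r{a}rding's inequality / a compact perturbation of a coercive form) reduces existence to uniqueness, then prove uniqueness by taking the imaginary part of $a(u,u)=0$ to force $u=0$ on $\Gamma_{R}$ and finishing with unique continuation. Your explicit Riesz-plus-compact-operator setup is a perfectly sound (and slightly more self-contained) version of the paper's appeal to \cite[Theorems 2.34 and 2.27]{M00}; the compactness of the trace map $H^1(\Omega)\to L^2(\partial\Omega)$ that you use to absorb the boundary term is standard on this square domain.

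The one place where your argument does not close --- and which you yourself flag as the main obstacle --- is the final unique continuation step. You propose to read off vanishing Cauchy data ($u=0$ and $\partial u/\partial\nu = iku = 0$) on $\Gamma_{R}$ and invoke a Holmgren-type theorem, but for a weak solution $u\in H^1(\Omega)$ the Neumann trace is only defined distributionally through the variational identity, and a boundary unique continuation theorem from Cauchy data on a portion of a Lipschitz boundary is not something you can cite off the shelf without addressing this regularity issue. The paper closes exactly this gap with a zero-extension device: since $u=0$ on $\Gamma_{R}$ in the trace sense, the extension $\au$ of $u$ by zero across $\Gamma_{R}$ into an enlarged rectangle $\tilde{\Omega}$ (e.g.\ $(-\varepsilon,1)\times(0,1)$ when $\Gamma_R=\Gamma_4$) lies in $H^{1}(\tilde{\Omega})$, and because the impedance boundary term in \eqref{weakform} vanishes, $\au$ satisfies $\langle \nabla\au,\nabla v\rangle_{\tilde{\Omega}}-k^{2}\langle\au,v\rangle_{\tilde{\Omega}}=0$ for all $v\in H^{1}(\tilde{\Omega})$, i.e.\ it is a weak Helmholtz solution on $\tilde{\Omega}$ vanishing on the open set $\tilde{\Omega}\setminus\overline{\Omega}$. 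The \emph{interior} unique continuation principle for second-order elliptic equations in 2D (\cite[Theorem 1.1]{A12} or \cite[Theorem 2.1]{GS20}) then gives $u\equiv 0$, with no need to make sense of the Neumann trace at all. If you replace your Holmgren step with this extension argument, your proof is complete and essentially identical to the paper's.
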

	
	\begin{proof}
		The sesquilinear form $a(\cdot,\cdot)$ is bounded, since $|a(u,u)| \le \max(1,k^2) \|u\|_{1,\Omega}^{2}$. Also, the G\r{a}rding's inequality \cite[(2.7)]{M00} is satisfied, since $\Re(a(u,u)) = \|u\|^{2}_{1,\Omega} - (k^{2}+1) \|u\|^{2}_{0,\Omega}$. 
		We also know that $\mathcal{H}$ is compactly embedded in $L^{2}(\Omega)$. Hence, by the Fredholm alternative \cite[Theorems 2.34 and 2.27]{M00}, the solution to the variational problem \eqref{weakform} exists as long as we can show its uniqueness.
		
We now prove the uniqueness. Suppose that $f=g_{R}=g_{N}=0$ in \eqref{weakform}. We have to prove that the solution $u$ must be $0$.
Then, recalling that $\Gamma_{R}\neq \emptyset$, we have $\Im(a(u,u))=k\|u\|_{0,\Gamma_{R}}$. Since $k$ is positive, we have $u=0$ on $\Gamma_R$ almost everywhere. Let $\tilde{\Omega}$ be an extended domain $\Omega$ such that $\tilde{\Omega}=(-\varepsilon,1)\times(0,1)$ if $\Gamma_{R}=\Gamma_{4}$, $\tilde{\Omega}=(0,1+\varepsilon)\times(0,1)$ if $\Gamma_{R}=\Gamma_{2}$, or $\tilde{\Omega}=(-\varepsilon,1+\varepsilon)\times(0,1)$ if $\Gamma_{R}=\Gamma_{2} \cup \Gamma_{4}$ for some $\varepsilon>0$. Let $\au$ be the function $u$ with zero extension in $\tilde{\Omega}$. Note that $\au \in H^{1}(\tilde{\Omega})$. Since $\langle \nabla u, \nabla v\rangle_{\Omega} - k^{2}\langle u, v\rangle_{\Omega}=0$ for all $v \in  \mathcal{H}$, we have $\langle \nabla \au, \nabla v\rangle_{\tilde{\Omega}} - k^{2}\langle \au, v\rangle_{\tilde{\Omega}}=0$ for all $v \in  H^{1}(\tilde{\Omega})$. Also noting that $\au=0$ in $\tilde{\Omega} \setminus \Omega$, by \cite[Theorem 2.1]{GS20} or \cite[Theorem 1.1]{A12}, we conclude that $u=0$. The existence and uniqueness of the solution to the problem \eqref{weakform} for the Helmholtz equations have been proved.
	\end{proof}

Furthermore, the existence of a unique solution still holds true even in the presence of inhomogeneous Dirichlet boundary conditions on $\Gamma_D$ due to lifting.

\subsection{Stability bounds for inhomogeneous vertical boundary conditions}
\label{subsec:vertical}

To establish stability bounds for inhomogeneous boundary conditions only on the vertical sides, we assume the horizontal sides take homogenous boundary conditions such that
\begin{equation} \label{bc:horiz0}
	\mathcal{B}_1, \mathcal{B}_3 \in \{\id, \tfrac{\partial }{\partial \nu}\}
	\quad \mbox{and}\quad
	\mathcal{B}_{1}u =g_{1}=0\; \mbox{ on }\; \Gamma_1, \quad
	\mathcal{B}_{3}u=g_{3}=0\; \mbox{ on }\; \Gamma_3.
\end{equation}

Define $\NN:=\N\cup\{0\}$. We shall use one of the following four orthonormal bases $\{Z_{j,n}\}_{n\in \NN}$, $j=1,\ldots,4$ in $L^2(\mathcal{I})$ with $\mathcal{I}:=[0,1]$:
\be \label{Zn}
\begin{split}
&Z_{1,n}:=\sqrt{2}\sin (n\pi \cdot) \qquad \quad\;\; \mbox{and}\qquad
Z_{2,0}:=1,
\quad
Z_{2,n}:=\sqrt{2}\cos(n\pi \cdot),\qquad n\in \N,\\
&Z_{3,n}:=\sqrt{2}\sin((n+\tfrac{1}{2})\pi \cdot) \quad \mbox{and}\qquad
Z_{4,n}:=\sqrt{2}\cos((n+\tfrac{1}{2})\pi \cdot),\qquad n\in \NN.
\end{split}
\ee
To maintain a unified presentation, we often use $Z_{1,0}:=0$ instead of dropping $Z_{1,0}$. For $g\in L^2(\mathcal{I})$, we let $\tilde{g}$ be the function $g$ with the zero extension outside the interval $\mathcal{I}$, and define $2$-periodic functions $G_1,\ldots,G_4$ whose values on $(-1,1]$ are defined by
\be \label{g:G}
\begin{split}
&G_1(x):=\tilde{g}(x)-\tilde{g}(-x),
\qquad\qquad\;\;
G_2(x):=\tilde{g}(x)+\tilde{g}(-x),\quad\\
&G_3(x):=(\tilde{g}(x)-\tilde{g}(-x)) e^{ix\pi/2},\quad\;\,
G_4(x):=(\tilde{g}(x)+\tilde{g}(-x))e^{ix\pi/2}.
\end{split}
\ee
For $g\in L^2(\mathcal{I})$ and $j=1,\ldots,4$, we have
\be \label{g:fourier}
g=\sum_{n\in \NN} \wh{g}(n) Z_{j,n}
\quad \mbox{with}\quad
\wh{g}(n):=\int_{0}^1 g(x) Z_{j,n}(x) dx, \quad \forall n\in \NN,
\ee
where we used the convention $Z_{1,0}:=0$. Let 
$Z_{j,n}'$ stand for the derivative of $Z_{j,n}$.
It is also easy to observe that $\{Z_{j,n}'\}_{n\in \NN}$ is an orthogonal system in $L^2(\mathcal{I})$ satisfying $\int_0^1 Z_{j,m}'(x) Z_{j,n}'(x) dx=0$ as long as $m\ne n$. For $\{Z_{j,n}\}_{n \in \NN}$ with $j=1,2$, we let $\{\sigma_{n}=n\pi\}_{n\in \NN}$. For $\{Z_{j,n}\}_{n \in \NN}$ with $j=3,4$, we let $\{\sigma_{n}=(n+\frac{1}{2})\pi\}_{n\in \NN}$. We refer to $\{\sigma_{n}\}_{n \in \N_0}$ as eigenvalues, $\{Z_{j,n}\}_{n \in \N_0}$ as eigenfunctions, and $\{(\sigma^2_{n},Z_{j,n})\}_{n \in \N_0}$ as eigenpairs. Due to the identities in \eqref{g:fourier}, given eigenpairs $\{(\sigma^2_{n},Z_{j,n})\}_{n \in \N_0}$ for some $j \in \{1,2,3,4\}$ and $s \ge 0$, we say that $g \in \mathcal{Z}^{s}(\Gamma_\ell)$, where $\ell \in \{1,2,3,4\}$, if $g \in L^{2}(\Gamma_\ell)$ and
\begin{equation}\label{spaceYs}
	 \|g\|^2_{\mathcal{Z}^{s}(\Gamma_\ell)}:=\sum_{n=0}^{\infty}|\wh{g}(n)|^{2} \sigma_{n}^{2s} < \infty \quad \mbox{with}\quad \wh{g}(n):=\int_{\Gamma_\ell} g(x)Z_{j,n}(x) dx=\int_{0}^{1} g(x)Z_{j,n}(x) dx, \quad \forall n\in \NN.
\end{equation}
Such a Hilbert space has been commonly used in stability estimates of the Helmholtz equation; e.g, see \cite[Section 2.2]{BG99} and \cite[Section 3]{MMP12}.

In this subsection, let our eigenvalues be $\{\mu_{n}=n\pi\}_{n\in \NN}$ or $\{\mu_{n}=(n+\frac{1}{2})\pi\}_{n\in \NN}$, and our eigenfunctions be
\begin{equation} \label{1dY:sol}
	Y_{n}(y) =
	\begin{cases}
		Z_{1,n}(y), & \text{if} \quad \mu_{n}=n\pi \ne 0 \text{ and } \mathcal{B}_1=\mathcal{B}_3=\id,\\
		Z_{2,n}(y), & \text{if} \quad \mu_{n}=n\pi\ne 0 \text{ and } \mathcal{B}_1=\mathcal{B}_3=\frac{\partial}{\partial \nu},\\
		Z_{3,n}(y), & \text{if} \quad \mu_{n}=(n+\frac{1}{2})\pi \text{ and } \mathcal{B}_{1}=\id, \mathcal{B}_{3}=\frac{\partial}{\partial \nu},\\
		Z_{4,n}(y), & \text{if} \quad \mu_{n}= (n+\frac{1}{2})\pi \text{ and } \mathcal{B}_{1}=\frac{\partial}{\partial \nu}, \mathcal{B}_{3}=\id,
	\end{cases}
	\quad \forall n \in \mathbb{N}_0,
\end{equation}
which together give us $\{(\mu^{2}_n,Y_n)\}_{n\in \N_0}$ as our eigenpairs. We are now ready to state our first set of stability bounds. The oscillating part of the solution predominantly contributes to the stability bound. Hence, the main idea of the proof is to find a delicate upper bound for its norm. We achieve this by establishing several technical norm estimates. The proof of the following theorem is deferred to \cref{sec:proofthm}.
	
	 \begin{theorem}\label{thm:stability1}
		Consider the Helmholtz equation in \eqref{helmholtz}--\eqref{helmholtz:bc}. Assume that \eqref{bc:horiz0} holds, $\mathcal{B}_2 \in \{ \id, \frac{\partial }{\partial \nu}, \frac{\partial}{\partial \nu}-i k \id\}$ with $\mathcal{B}_{2} u =g_2=0$ on $\Gamma_2$, and $\mathcal{B}_4=\frac{\partial}{\partial \nu}-i k \id$ with $\mathcal{B}_4 u=g_{4} \in L^{2}(\Gamma_4)$ on $\Gamma_4$. Then the unique solution $u$ to the Helmholtz equation in \eqref{helmholtz}--\eqref{helmholtz:bc} with the source term $f$ vanishing satisfies
		\begin{equation} \label{estim:stability1}
			\|\nabla u \|_{0,\Omega} + k \|u\|_{0,\Omega} \le \sqrt{12} \max\{k,1\} \|g_{4}\|_{0,\Gamma_{4}},\qquad \forall\; k>0.
		\end{equation}
	\end{theorem}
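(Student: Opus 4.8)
The plan is to use the homogeneous horizontal conditions to separate variables in $y$. Since $\bc_1,\bc_3\in\{\id,\frac{\partial}{\partial\nu}\}$ with $g_1=g_3=0$, the eigenpairs $\{(\mu_n^2,Y_n)\}_{n\in\NN}$ from \eqref{1dY:sol} furnish an orthonormal basis of $L^2(\mathcal{I})$ adapted to those conditions, so I would expand the (unique, by the Proposition) solution as $u(x,y)=\sum_{n\in\NN}u_n(x)Y_n(y)$ with $u_n(x):=\int_0^1 u(x,y)Y_n(y)\,dy$. Testing the weak form \eqref{weakform} against $\phi(x)Y_n(y)$ and using $Y_n''=-\mu_n^2 Y_n$ decouples the equation $\Delta u+k^2u=0$ into the family of ODEs $u_n''+(k^2-\mu_n^2)u_n=0$ on $\mathcal{I}$. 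The boundary data enter only through the endpoints: the homogeneous condition at $x=1$ reads $u_n(1)=0$, $u_n'(1)=0$, or $u_n'(1)-iku_n(1)=0$ according to $\bc_2$, while the impedance condition on $\Gamma_4$ (outward normal $-x$) becomes $-u_n'(0)-iku_n(0)=\wh{g_4}(n)$.

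Next I would reduce the two norms by Parseval. Orthonormality of $\{Y_n\}$, orthogonality of $\{Y_n'\}$, and $\int_0^1|Y_n'|^2=\mu_n^2$ (the integration-by-parts boundary terms vanish because at each endpoint either $Y_n$ or $Y_n'$ is zero) give
\[
\|\nabla u\|_{0,\Omega}^2+k^2\|u\|_{0,\Omega}^2=\sum_{n\in\NN}\Big(\|u_n'\|_{0,\mathcal{I}}^2+(\mu_n^2+k^2)\|u_n\|_{0,\mathcal{I}}^2\Big).
\]
Because $(a+b)^2\le 2(a^2+b^2)$ and $\sum_n|\wh{g_4}(n)|^2=\|g_4\|_{0,\Gamma_4}^2$, the theorem reduces to the per-mode estimate $\|u_n'\|_{0,\mathcal{I}}^2+(\mu_n^2+k^2)\|u_n\|_{0,\mathcal{I}}^2\le 6\max\{k^2,1\}|\wh{g_4}(n)|^2$; summing then yields the constant $\sqrt{12}=\sqrt{2\cdot 6}$ and the factor $\max\{k,1\}=\sqrt{\max\{k^2,1\}}$.

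For each mode I would solve the ODE explicitly, splitting into the propagating ($\mu_n<k$, trigonometric, $\lambda_n:=\sqrt{k^2-\mu_n^2}$), evanescent ($\mu_n>k$, hyperbolic, $\beta_n:=\sqrt{\mu_n^2-k^2}$), and critical ($\mu_n=k$, affine) regimes, each crossed with the three choices of $\bc_2$. In the representative Dirichlet case $\bc_2=\id$ one gets $u_n(x)=A\sin(\lambda_n(x-1))$ with $|A|^2=|\wh{g_4}(n)|^2/(\lambda_n^2+\mu_n^2\sin^2\lambda_n)$, and evaluating the two integrals collapses the left-hand side to
\[
\|u_n'\|_{0,\mathcal{I}}^2+(\mu_n^2+k^2)\|u_n\|_{0,\mathcal{I}}^2=|\wh{g_4}(n)|^2\,\frac{\lambda_n^3+\mu_n^2(\lambda_n-\sin\lambda_n\cos\lambda_n)}{\lambda_n^3+\mu_n^2\lambda_n\sin^2\lambda_n},
\]
so everything reduces to bounding this ratio (and its hyperbolic and affine analogues) by $6\max\{k^2,1\}$.

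The main obstacle, and the only genuinely delicate point, is bounding this ratio uniformly near resonance, where $\mu_n\to k$, $\lambda_n\to 0$, and numerator and denominator both vanish to second order. Writing the ratio as $N_n/D_n$ with $D_n:=\lambda_n^2+\mu_n^2\sin^2\lambda_n$ and $N_n:=k^2-\mu_n^2\sin\lambda_n\cos\lambda_n/\lambda_n$, I would dichotomize on $\lambda_n$. If $\lambda_n\ge 1$, then $D_n\ge\lambda_n^2\ge 1$ while $N_n\le k^2+\mu_n^2\le 2k^2$ (from $|\sin\lambda_n\cos\lambda_n|\le\lambda_n$ and $\mu_n\le k$), so $N_n/D_n\le 2k^2\le 6\max\{k^2,1\}$. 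If $\lambda_n\le 1$, the cancellation is tracked by the elementary inequalities $\frac{\sin 2\lambda_n}{2\lambda_n}\ge 1-\frac{2\lambda_n^2}{3}$ and $\sin\lambda_n\ge\frac{2}{\pi}\lambda_n$, which give $N_n\le\lambda_n^2(1+\frac{2\mu_n^2}{3})$ and $D_n\ge\lambda_n^2(1+\frac{4\mu_n^2}{\pi^2})$, whence $N_n/D_n\le\frac{\pi^2}{6}<6$. The evanescent modes are strongly damped (the hyperbolic analogue is $\le\coth 1$ for $\beta_n\ge 1$ and $\le 1$ for $\beta_n\le 1$, using $\sinh\beta_n\ge\beta_n$ and $\frac{\sinh 2\beta_n}{2\beta_n}\le 1+\beta_n^2$ on $(0,1]$), and the critical mode is an explicit rational function of $k$ bounded by $1$. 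The remaining effort is bookkeeping: carrying this template through the Neumann and impedance choices of $\bc_2$, where only the explicit form of the denominator changes, and confirming that the generous constant $6$ dominates in every case.
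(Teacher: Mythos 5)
Your proposal is correct and follows essentially the same route as the paper's proof: separate variables using the eigenbasis $\{Y_n\}$ adapted to the homogeneous horizontal conditions, reduce via Parseval to the per-mode quantity $\|u_n'\|_{0,\mathcal{I}}^2+(\mu_n^2+k^2)\|u_n\|_{0,\mathcal{I}}^2$, solve the resulting one-dimensional boundary value problems explicitly in the propagating, evanescent, and critical regimes (the paper's Lemma~\ref{1dX:sol} and the quantities $\phi_n,\psi_n,\theta_{N_c}$ in \eqref{phinpsin}), and bound the resulting ratios by $6\max\{k^2,1\}$ via a dichotomy on the size of $k\lambda_n$, yielding $\sqrt{12}=\sqrt{2\cdot 6}$. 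The only differences are cosmetic (your $\lambda_n$ is the paper's $k\lambda_n$, your dichotomy threshold is $1$ rather than $\pi/4$, and your Taylor-type bounds are slightly different from \eqref{sincos}--\eqref{kless1}), and the binding Neumann case you defer to bookkeeping does indeed close under your template with constant at most $6$.
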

	
	The following example demonstrates how the stability bound in \cref{thm:stability1} is sharp in the sense that the right-hand side of \eqref{estim:stability1} holds up to a constant multiple (independent of $k$ and $g_4$).
	
	\begin{example}\label{ex:stability1}
		\normalfont
		In what follows, suppose that the conditions of \cref{thm:stability1} hold, that is, $f=0$, $\mathcal{B}_1, \mathcal{B}_3 \in \{\id,\frac{\partial}{\partial \nu}\}$, and $\mathcal{B}_4=\frac{\partial}{\partial \nu}-ik\id$ by \eqref{bc} with the boundary data $g_1 = g_2= g_3=0$ in \eqref{helmholtz:bc}. Note that the boundary data $g_4=\mathcal{B}_4 u$ on $\Gamma_4$.
		We have three choices for the boundary operator $\mathcal{B}_2$. Let us consider the first case $\mathcal{B}_{2}=\frac{\partial }{\partial \nu} - ik\id$ with
		$k^{2} := \mu_{n}^2+\pi^2$, where $\mu_n=n \pi$ or $\mu_n = (n+\frac{1}{2})\pi$ for a temporarily fixed integer $n \in \mathbb{N}$.
		Let $u=\tfrac{1}{2k\pi}\left(-\sin(\pi x)k + \cos(\pi x) \pi i\right)Y_{n}(y)$ be the exact solution, where $Y_{n}(y)$ takes one of the forms in \eqref{1dY:sol}. Because $\mathcal{B}_4 =\frac{\partial}{\partial \nu}-ik\id$, we have
		$g_4=\mathcal{B}_4 u=Y_n$ on $\Gamma_4$ and hence $\|g_{4}\|_{0,\Gamma_{4}}=1$. The exact solution $u=\tfrac{1}{2k\pi}\left(-\sin(\pi x)k + \cos(\pi x) \pi i\right)Y_{n}(y)$ satisfies
		\[
		\|\nabla u \|_{0,\Omega} + k \|u\|_{0,\Omega} =  \frac{\sqrt{2}}{2}k\sqrt{\frac{1}{\pi^2}+\frac{1}{k^2}} \ge \frac{\sqrt{2}}{2\pi}k \|g_{4}\|_{0,\Gamma_{4}}
		\quad \mbox{with}\quad k:= \sqrt{\mu_n^2+\pi^2}, \quad
		\mu_n\in \{n\pi, (n+\tfrac{1}{2})\pi\}
		\]
		for all $n \in \N$.
		
		Next, we consider the second case $\mathcal{B}_{2}=\frac{\partial}{\partial \nu}$ with $k^{2} := \mu_n^2+\frac{1}{4}\pi^2$, where $\mu_n=n \pi$ or $\mu_n = (n+\frac{1}{2})\pi$ for a temporarily fixed integer $n \in \mathbb{N}$. Let $u=-\tfrac{2}{\pi}\sin(\frac{\pi}{2}x)Y_{n}(y)$ be the exact solution, where $Y_{n}(y)$ takes one of the forms in \eqref{1dY:sol}. Because $\mathcal{B}_4 =\frac{\partial}{\partial \nu}-ik\id$, $g_4=\mathcal{B}_4 u=
		Y_n$ on $\Gamma_4$ and hence $\|g_{4}\|_{0,\Gamma_{4}}=1$.
		The exact solution $u=-\tfrac{2}{\pi}\sin(\frac{\pi}{2}x)Y_{n}(y)$ satisfies
		\[
		\|\nabla u \|_{0,\Omega} + k \|u\|_{0,\Omega} =\frac{2\sqrt{2}}{\pi} k \|g_{4}\|_{0,\Gamma_{4}} \quad \mbox{with}\quad k:=\sqrt{\mu_n^2+\tfrac{1}{4}\pi^2},\quad
		\mu_n\in \{n\pi, (n+\tfrac{1}{2})\pi\},\quad \forall\; n\in \N.
		\]
		
		Finally, we consider the third case $\mathcal{B}_{2}=\id$ with $k^{2} := \mu_n^2+\pi^2$, where $\mu_n=n \pi$ or $\mu_n = (n+\frac{1}{2})\pi$ for a temporarily fixed integer $n \in \mathbb{N}$. Let $u=-\tfrac{1}{\pi}\sin(\pi x) Y_{n}(y)$ be the exact solution, where $Y_{n}(y)$ takes one of the forms in \eqref{1dY:sol}. Because $\mathcal{B}_4 =\frac{\partial}{\partial \nu}-ik\id$, $g_4=\mathcal{B}_4 u=Y_n$ on $\Gamma_4$ and hence $\|g_{4}\|_{0,\Gamma_{4}}=1$.
		The exact solution $u=-\tfrac{1}{\pi}\sin(\pi x)Y_{n}(y)$ satisfies
		\[
		\|\nabla u \|_{0,\Omega} + k \|u\|_{0,\Omega} = \frac{\sqrt{2}}{\pi} k \|g_{4}\|_{0,\Gamma_{4}}
		\quad \mbox{with}\quad k:=\sqrt{\mu_n^2+\pi^2},\quad
		\mu_n\in \{n\pi, (n+\tfrac{1}{2})\pi\},\quad \forall\; n\in \N.
		\]
		A similar example demonstrating the sharpness of the stability bound \eqref{estim:stability1} for the third case was also presented in \cite{DLS15}.
	\end{example}
	
	Next, we present our second set of stability bounds, whose proof is deferred to \cref{sec:proofthm}, since it again involves several technical norm estimates. We recall that $\mathcal{Z}^{\frac{1}{2}}(\Gamma_{2})$ can be identified with a subspace of $H^{\frac{1}{2}}(\Gamma_{2})$, which after the zero extension belongs to the space $H^{\frac{1}{2}}(\partial \Omega)$; e.g, see \cite[Section 3]{MMP12}.
	
	 \begin{theorem}\label{thm:stability2}
		Consider the Helmholtz equation in \eqref{helmholtz}--\eqref{helmholtz:bc} with the source term $f$ vanishing. Assume that \eqref{bc:horiz0} holds, $\mathcal{B}_4=\frac{\partial}{\partial \nu}-ik\id$ with $\mathcal{B}_{4} u =g_4= 0$ on $\Gamma_4$, and $\mathcal{B}_2\in \{\id, \frac{\partial}{\partial \nu},
		\frac{\partial}{\partial \nu}-ik\id\}$ with $\mathcal{B}_2 u=g_{2} \in L^{2}(\Gamma_2)$ on $\Gamma_2$. Then,
		\begin{enumerate}
			\item[(1)] For $\bc_2 =\frac{\partial}{\partial \nv}-ik\id$, the unique solution $u$ to the Helmholtz equation in \eqref{helmholtz}--\eqref{helmholtz:bc} satisfies
			\[
			\|\nabla u \|_{0,\Omega} + k \|u\|_{0,\Omega} \le \sqrt{12} \max\{k,1\} \|g_{2}\|_{0,\Gamma_{2}},\qquad \forall\; k>0.
			\]
			\item[(2)] For $\bc_2=\frac{\partial }{\partial \nv}$,
			the unique solution $u$ to the Helmholtz equation in \eqref{helmholtz}--\eqref{helmholtz:bc} satisfies
			 \begin{equation}\label{Ck:B4inneum}
				\|\nabla u \|_{0,\Omega} + k \|u\|_{0,\Omega} \le \sqrt{20} \max\{k^{2},1\}\|g_{2}\|_{0,\Gamma_2},\qquad \forall\; k>0.
			\end{equation}
			\item[(3)] For $\bc_2=\id$ and $g_{2} \in \mathcal{Z}^{\frac{1}{2}}(\Gamma_2)$, the unique solution $u$ to the Helmholtz equation in \eqref{helmholtz}--\eqref{helmholtz:bc} satisfies
			 \begin{equation}\label{Ck:B4indiric}
				\|\nabla u \|_{0,\Omega} + k \|u\|_{0,\Omega} \le \sqrt{14} \left( \max\{k^{2},1\} \|g_{2}\|_{0,\Gamma_2} + \max\{k^{\frac{1}{2}},1\} \|g_{2}\|_{\mathcal{Z}^{\frac{1}{2}}(\Gamma_2)}\right),\qquad \forall\; k>0.
			\end{equation}
		\end{enumerate}
	\end{theorem}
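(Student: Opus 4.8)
The plan is to diagonalize the problem in the $y$-variable. Since the horizontal conditions \eqref{bc:horiz0} are homogeneous and separable, the functions $\{Y_n\}_{n\in\NN}$ from \eqref{1dY:sol} form an orthonormal basis of $L^2(\mathcal{I})$ with $-Y_n''=\mu_n^2 Y_n$, so I would expand $u(x,y)=\sum_n u_n(x)Y_n(y)$ and $g_2=\sum_n \wh{g_2}(n)Y_n$. Testing \eqref{helmholtz}--\eqref{helmholtz:bc} (with $f=0$) against $Y_n$ reduces the problem to the decoupled family of two-point boundary value problems $u_n''+(k^2-\mu_n^2)u_n=0$ on $(0,1)$, carrying the homogeneous $\mathcal{B}_4$-impedance condition $u_n'(0)=-ik\,u_n(0)$ at $x=0$ and the $\mathcal{B}_2$-condition at $x=1$. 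Orthonormality of $\{Y_n\}$ together with the orthogonality of $\{Y_n'\}$ and $\|Y_n'\|_{0,\mathcal{I}}^2=\mu_n^2$ gives the clean identity $\|\nabla u\|_{0,\Omega}^2+k^2\|u\|_{0,\Omega}^2=\sum_n\big(\|u_n'\|_{0,\mathcal{I}}^2+(\mu_n^2+k^2)\|u_n\|_{0,\mathcal{I}}^2\big)$, so the whole theorem reduces to mode-by-mode estimates that I then sum.

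Before computing anything I would dispatch case (1) by symmetry. The reflection $x\mapsto 1-x$ swaps $\Gamma_2\leftrightarrow\Gamma_4$, preserves $\Delta+k^2$ and the impedance operator $\tfrac{\partial}{\partial\nu}-ik\id$, and carries the configuration of \cref{thm:stability1} (impedance datum $g_4$ on $\Gamma_4$, together with $\mathcal{B}_2=\tfrac{\partial}{\partial\nu}-ik\id$ and $g_2=0$) into exactly case (1) (impedance datum $g_2$ on $\Gamma_2$, with $g_4=0$). Hence case (1) inherits the bound $\sqrt{12}\max\{k,1\}$ from \cref{thm:stability1} with no further work, and the real content lies in the Neumann case (2) and the Dirichlet case (3).

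For those, I would build each mode from the $x=0$ condition, writing in the propagating regime $\mu_n<k$ (with $\gl:=\sqrt{k^2-\mu_n^2}$) the explicit solution $u_n(x)=u_n(0)\big(\cos(\gl x)-\tfrac{ik}{\gl}\sin(\gl x)\big)$, together with its hyperbolic analogue in the evanescent regime $\mu_n>k$ (with $\tgl:=\sqrt{\mu_n^2-k^2}$) and the linear solution in the borderline case $\mu_n=k$. Imposing the $x=1$ condition gives $\wh{g_2}(n)=u_n(0)\,D_n$ with an explicit symbol $D_n$ satisfying $|D_n|^2=\gl^2\sin^2\gl+k^2\cos^2\gl$ for Neumann and $|D_n|^2=\cos^2\gl+\tfrac{k^2}{\gl^2}\sin^2\gl$ for Dirichlet (and their hyperbolic analogues). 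After evaluating the elementary integrals for $\|u_n\|_{0,\mathcal{I}}^2$ and $\|u_n'\|_{0,\mathcal{I}}^2$, the problem reduces to bounding, uniformly in $n$, the ratio $R_n:=\big(\|u_n'\|_{0,\mathcal{I}}^2+(\mu_n^2+k^2)\|u_n\|_{0,\mathcal{I}}^2\big)/|\wh{g_2}(n)|^2=E_n/|D_n|^2$ and then summing $\sum_n R_n|\wh{g_2}(n)|^2$.

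The main obstacle is sharp uniform control of $R_n$ across the three regimes, since this is what pins the stated $k$-powers. The delicate points are: (i) apparent singularities as $\gl\to 0^+$, where the factor $\tfrac{k^2}{\gl^2}\big(1-\tfrac{\sin 2\gl}{2\gl}\big)$ in $E_n$ stays bounded by $\tfrac{2k^2}{3}$, so that $E_n=O(k^4)$ rather than blowing up; and (ii) the extremal mode sitting not at the spectral edge but at $\gl=\tfrac{\pi}{2}$ for Neumann, where $\cos\gl=0$ collapses $|D_n|^2$ to $\gl^2$ and forces $R_n\sim k^4$, hence the $\max\{k^2,1\}$ factor, matching the sharpness examples after \cref{thm:stability1}. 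For the Dirichlet case (3) I would additionally use that $\tfrac{k^2}{\gl^2}\ge 1$ forces $|D_n|\ge 1$ on propagating modes, so those together with the borderline modes are governed by $\max\{k^2,1\}\|g_2\|_{0,\Gamma_2}$; the evanescent modes instead satisfy $R_n\sim\mu_n$ (numerator and $|D_n|^2$ both scale like $e^{2\tgl}$, leaving one power of $\tgl$), so summing $\sum_n\mu_n|\wh{g_2}(n)|^2$ produces exactly the $\mathcal{Y}^{\frac12}(\Gamma_2)$-norm with the $\max\{k^{1/2},1\}$ prefactor. Tracking the explicit constants through these worst-case suprema then yields the $\sqrt{20}$ and $\sqrt{14}$ in \eqref{Ck:B4inneum}--\eqref{Ck:B4indiric}.
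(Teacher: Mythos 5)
Your proposal follows essentially the same route as the paper: separate variables in $y$, solve each mode explicitly with the homogeneous impedance condition at $x=0$, use Parseval's identity to reduce the theorem to uniform bounds on the per-mode energy ratios across the propagating, cutoff, and evanescent regimes, and in the Dirichlet case split off the evanescent sum to produce the $\mathcal{Y}^{\frac{1}{2}}(\Gamma_2)$ term (the paper likewise disposes of case (1) by reusing the computation from \cref{thm:stability1}). One small caution on case (3): for the evanescent modes the correct uniform bound is $R_n\lesssim \max\{k,1\}\,\mu_n$ rather than $R_n\sim\mu_n$, because the just-evanescent modes with $\mu_n$ slightly above $k$ (where the exponential cancellation you invoke is not yet active) contribute an extra factor of $k$ --- this is precisely the source of the $\max\{k^{\frac{1}{2}},1\}$ prefactor that you correctly state in your final bound.
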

	
	An example demonstrating the sharpness of the stability bound in item (1) of \cref{thm:stability2} can be recovered from the first case discussed in \cref{ex:stability1}, where both vertical sides have the impedance boundary conditions with only the left hand side being inhomogeneous, by replacing $x$ in the solution $u$ with $1-x$. This way the nonzero vertical boundary condition is on the right-hand side (i.e., $\Gamma_2$). The following example demonstrates how the stability bounds \eqref{Ck:B4inneum} and \eqref{Ck:B4indiric} are sharp in the sense that the right-hand sides of \eqref{Ck:B4inneum} and \eqref{Ck:B4indiric} hold up to a constant multiple (independent of $k$ and $g_2$).
	
	\begin{example}
		\normalfont
		In what follows, suppose that the conditions of \cref{thm:stability2} hold,
		that is, $f=0$, $\mathcal{B}_1, \mathcal{B}_3\in \{\id, \frac{\partial}{\partial \nu}\}$, and
		$\mathcal{B}_4 =\frac{\partial}{\partial \nu}-ik\id$  by \eqref{bc} with the boundary data $g_1=0, g_3=0, g_4=0$ in \eqref{helmholtz:bc}. Note that the boundary data $g_2=\mathcal{B}_2 u$ on $\Gamma_2$. Let us consider the first case $\mathcal{B}_2 =\tfrac{\partial}{\partial \nu}$ with $k^{2} := \mu_n^2+\frac{1}{4}\pi^2$, where $\mu_n=n \pi$ or $\mu_n = (n+\frac{1}{2})\pi$ for a temporarily fixed integer $n \in \mathbb{N}$. Let $u=\tfrac{1}{\pi^2}\left(-2\pi \cos(\tfrac{\pi}{2}x) + 4k\sin(\tfrac{\pi}{2}x)i\right)Y_n(y)$ be the exact solution, where $Y_{n}(y)$ takes one of the forms in \eqref{1dY:sol}. Then $g_2=\mathcal{B}_2 u=Y_n$ on $\Gamma_2$ and hence
		 $\|g_{2}\|_{0,\Gamma_{2}}=1$. The exact solution $u=\tfrac{1}{\pi^2}\left(-2\pi \cos(\tfrac{\pi}{2}x) + 4k\sin(\tfrac{\pi}{2}x)i\right)Y_n(y)$
		satisfies
		\[
			\|\nabla u \|_{0,\Omega} + k \|u\|_{0,\Omega} = \frac{4\sqrt{2}}{\pi}k^{2}\sqrt{\frac{1}{\pi^2}+\frac{1}{4k^2}}
			\ge \frac{4\sqrt{2}}{\pi^2}k^2 \|g_{2}\|_{0,\Gamma_{2}},
		\]
		where $k:=\sqrt{\mu_n^2+\tfrac{1}{4}\pi^2}$ and $\mu_n\in \{n\pi, (n+\tfrac{1}{2})\pi\}$ for all $n \in \N$.
		
		Finally, we consider the case $\mathcal{B}_{2} = \id$ with $k^{2}:=\mu_n^{2} + \pi^2$, where $\mu_n=n\pi$ or $\mu_n=(n+\tfrac{1}{2})\pi$ for a temporarily fixed integer $n \in \mathbb{N}$.  Let $u=\tfrac{1}{\pi}\left(-\pi\cos(\pi x) +\sin(\pi x) k i \right)Y_n(y)$ be the exact solution, where $Y_{n}(y)$ takes the form of \eqref{1dY:sol}. Then $g_2=\mathcal{B}_2 u=Y_n$ on $\Gamma_2$ and hence
		$\|g_{2}\|_{0,\Gamma_{2}}=1$. The exact solution $u=\tfrac{1}{\pi}\left(-\pi\cos(\pi x) +\sin(\pi x) k i \right)Y_n(y)$
		satisfies
		{\small
		\[
		\|\nabla u \|_{0,\Omega} + k \|u\|_{0,\Omega} = \frac{\sqrt{2}}{\pi} \sqrt{k^4 + \pi^2 k^2} \ge \frac{1}{\pi} \left(k^2 + \pi k\right) \ge \frac{1}{\pi} \left(k^2 + \pi k^{\frac{1}{2}} \mu_n^{\frac{1}{2}}\right) \ge \frac{1}{\pi} \left(k^2 \|g_2\|_{0,\Gamma_2} + k^{\frac{1}{2}} \|g_2\|_{\mathcal{Z}^{\frac{1}{2}}(\Gamma_2)} \right),
		\]
		}
		where $k:=\sqrt{\mu_n^2+\pi^2}$ and $\mu_n\in \{n\pi, (n+\tfrac{1}{2})\pi\}$ for all $n \in \N$.
	\end{example}
	
	\subsection{Stability bounds for non-vanishing source terms}
	\label{subsec:source}

	We can derive a stability estimate for $f \in L^{2}(\Omega)$ by using the variational formulation \eqref{weakform} and the Rellich's identity \cite[Proposition 2.1]{CF06}. A part of this problem (i.e., $\Gamma_{D}=\Gamma_{1} \cup \Gamma_{2} \cup \Gamma_{3}$ and $\Gamma_{R}:=\Gamma_{4}$) was addressed in \cite[Appendix]{DLS15}. The proof of the following result is deferred to Section 3.

	\begin{theorem}
		\label{thm:stabilityf}
		Consider the Helmholtz equation \eqref{helmholtz}-\eqref{bc}. Assume that $g_{j}=0$ on $\Gamma_j$ for all $j=1,\dots,4$ and $f \in L^{2}(\Omega)$.
		\begin{itemize}
		\item[(1)] For $\mathcal{B}_{2}=\id$, if the unique solution $u$ to \eqref{weakform} is in $H^{2}(\Omega)$, then
		\be \label{estim:withf:nd}
		\|\nabla u\|_{0,\Omega} + k \| u\|_{0,\Omega} \le \sqrt{30} \max\{k^{2},1\} \|f\|_{0,\Omega}, \quad \forall k>0.
		\ee
		\item[(2)] 
		For $\mathcal{B}_{2} \in \{\frac{\partial}{\partial \nu},\frac{\partial}{\partial \nu}-ik\id\}$, if the unique solution $u$ to \eqref{weakform} is in $H^{2}(\Omega)$, then
		\be \label{estim:withf:rn}
		\|\nabla u\|_{0,\Omega} + k \| u\|_{0,\Omega} \le \sqrt{542} \max\{k^{2},k^{-1/2}\} \|f\|_{0,\Omega}, \quad \forall k >0.
		\ee
		\end{itemize}
	\end{theorem}
	The following example demonstrates how the stability bounds \eqref{estim:withf:nd} and \eqref{estim:withf:rn} are sharp in the sense that the right-hand sides of \eqref{estim:withf:nd} and \eqref{estim:withf:rn} hold up to a constant multiple (independent of $k$ and $f$).

	\begin{example}
		\normalfont In what follows, suppose that the conditions of \cref{thm:stabilityf} hold, that is, $f \in L^{2}(\Omega)$, $\mathcal{B}_1, \mathcal{B}_3 \in \{\id,\frac{\partial}{\partial \nu}\}$, $\mathcal{B}_4=\frac{\partial}{\partial \nu}-i k \id$ by \eqref{bc} with the boundary data $g_{j}=0$ on $\Gamma_j$ for all $j=1,\dots,4$. Let us consider the first case $\mathcal{B}_{2}=\id$ with $k^2:=\mu_n^2 + \pi^2$, where $\mu_n=n \pi$ or $\mu_n = (n+\frac{1}{2})\pi$ for a temporarily fixed integer $n \in \mathbb{N}$. Let $u = \frac{1}{\pi^3}\left(\pi \cos(\pi x)+\pi-2\sin(\pi x)ki\right)Y_n(y)$, where $Y_{n}(y)$ takes one of the forms in \eqref{1dY:sol}. Then $f=Y_n(y)$ and hence $\|f\|_{0,\Omega}=1$. The exact solution $u = \frac{1}{\pi^3}\left(\pi \cos(\pi x)+\pi-2\sin(\pi x)ki\right)Y_n(y)$ satisfies
		\[
		\|\nabla u \|_{0,\Omega} + k \|u\|_{0,\Omega} 
		= \frac{\sqrt{2}}{\pi^3} k^2 \left(\sqrt{1+\frac{3\pi^2}{4k^2}-\frac{\pi^4}{2k^4}} + \sqrt{1+\frac{3\pi^2}{4k^2}}\right) \ge \frac{2\sqrt{2}}{\pi^3} k^2 \|f\|_{0,\Omega},
		\]
		where $k:=\sqrt{\mu_n^2+\pi^2}$ and $\mu_n\in \{n\pi, (n+\tfrac{1}{2})\pi\}$ for all $n \in \N$.\\
		\indent Next, we consider the second case $\mathcal{B}_{2}=\frac{\partial}{\partial \nu}$ with $k^2:=\mu_n^2 + \frac{1}{4}\pi^2$, where $\mu_n=n \pi$ or $\mu_n = (n+\frac{1}{2})\pi$ for a temporarily fixed integer $n \in \mathbb{N}$. Let $u = \frac{1}{\pi^3}\left(4\pi - 8k \sin(\frac{\pi}{2}x) i\right)Y_n(y)$, where $Y_{n}(y)$ takes one of the forms in \eqref{1dY:sol}. Then $f=Y_n(y)$ and hence $\|f\|_{0,\Omega}=1$. The exact solution $u = \frac{1}{\pi^3}\left(4\pi - 8k \sin(\frac{\pi}{2}x) i\right)Y_n(y)$ satisfies
		\[
		\|\nabla u \|_{0,\Omega} + k \|u\|_{0,\Omega} 
		= \frac{4\sqrt{2}}{\pi^3} k^2 \left(\sqrt{1+\frac{\pi^2}{2k^2}-\frac{\pi^4}{8k^4}} + \sqrt{1+\frac{\pi^2}{2k^2}}\right) \ge \frac{8\sqrt{2}}{\pi^3} k^2 \|f\|_{0,\Omega},
		\]
		where $k:=\sqrt{\mu_n^2+\frac{1}{4}\pi^2}$ and $\mu_n\in \{n\pi, (n+\tfrac{1}{2})\pi\}$ for all $n \in \N$.\\
		\indent Finally, we consider the third case $\mathcal{B}_{2}=\frac{\partial}{\partial \nu}-ik\id$ with $k^2:=\mu_n^2 + \pi^2$, where $\mu_n=n \pi$ or $\mu_n = (n+\frac{1}{2})\pi$ for a temporarily fixed integer $n \in \mathbb{N}$. Let $u = \frac{1}{\pi^3}\left(\pi - k \sin(\pi x) i\right)Y_n(y)$, where $Y_{n}(y)$ takes one of the forms in \eqref{1dY:sol}. Then $f=Y_n(y)$ and hence $\|f\|_{0,\Omega}=1$. The exact solution $u = \frac{1}{\pi^3}\left(\pi - k \sin(\pi x) i\right)Y_n(y)$ satisfies
		\[
		\|\nabla u \|_{0,\Omega} + k \|u\|_{0,\Omega} 
		= \frac{1}{\sqrt{2}\pi^3} k^2 \left(\sqrt{1+\frac{2\pi^2}{k^2}-\frac{2\pi^4}{k^4}} + \sqrt{1+\frac{2\pi^2}{k^2}}\right) \ge \frac{\sqrt{2}}{\pi^3} k^2 \|f\|_{0,\Omega},
		\]
		where $k:=\sqrt{\mu_n^2+\pi^2}$ and $\mu_n\in \{n\pi, (n+\tfrac{1}{2})\pi\}$ for all $n \in \N$.
	\end{example}
	
\subsection{Stability bounds for inhomogeneous horizontal boundary conditions using a lifting technique}
\label{subsec:inhomohorizbdry}
	
	In this section, we discuss how under certain assumptions, we can transfer the inhomogeneous horizontal boundary data to the vertical boundary conditions. This procedure is well known as lifting in the literature. As we shall soon see, we are actually considering a particular instance of lifting, where our auxiliary functions do not affect the source term at all. Consider the Helmholtz equation \eqref{helmholtz}--\eqref{bc}. Without loss of generality, let us assume that only one of the horizontal boundary conditions is inhomogeneous and it is on $\Gamma_1$. We can use the same method to handle the case where both horizontal boundary conditions are inhomogeneous. Our goal is thus to explicitly construct an auxiliary function $\au$ satisfying
	 \begin{equation}\label{helmholtz:aux}
		\mathcal{L}\au:=\Delta \au + k^2 \au = 0 \quad \text{in} \quad \Omega:=(0,1)^2
		\quad \mbox{with} \quad
		\bc_1 \au  = g_1  \;\; \mbox{on} \;\; \Gamma_{1},
		\quad \bc_3 \au = 0 \;\; \mbox{on} \;\; \Gamma_{3}.
	\end{equation}
	%
	We shall impose some conditions on $g_1$ to ensure that the traces of $\au$ belong to the appropriate function spaces so that we can go back to the situations discussed in \cref{sec:wellposed}.

Expanding $g_1$ in terms of certain eigenfunctions is a vital step for the construction of the above auxiliary solution. For eigenvalues $\{\tilde{\mu}_n=n\pi \}_{n\in \NN}$, we can use either eigenfunctions $\{\tilde{X}_n=Z_{1,n}\}_{n\in \NN}$ or $\{\tilde{X}_n=Z_{2,n}\}_{n\in \NN}$. For eigenvalues $\{\tilde{\mu}_n=(n+\frac{1}{2})\pi \}_{n\in \NN}$, we can use either eigenfunctions $\{\tilde{X}_n=Z_{3,n}\}_{n\in \NN}$ or $\{\tilde{X}_n=Z_{4,n}\}_{n\in \NN}$. We first discuss how to properly choose $\tilde{\mu}_n, n\in \NN$. Define
\be \label{dbeta}
d_0:=\mbox{dist}(k^2, \pi^2 \Z)=\inf_{n\in \Z}|k^2-n\pi^2|
\quad \mbox{and}\quad
d_1:=\mbox{dist}(k^2, \pi^2 (\tfrac{1}{2}+\Z))=\inf_{n\in \Z}|k^2-(n+\tfrac{1}{2})\pi^2|.
\ee
Note that $d_0,d_1\in [0, \frac{1}{2}\pi^2]$ and $d_0+d_1=\frac{1}{2}\pi^2$.
For $n\in \NN$, we choose $\tilde{\mu}_n$ according to the following four cases:
\be \label{allcases}
\tilde{\mu}_n=
\begin{cases}
	(n+\frac{1}{2})\pi, &\text{if $\bc_1=\bc_3$ and $d_0\in [0,\frac{1}{8}\pi^2]$},\\
	n\pi, &\text{if $\bc_1=\bc_3$ and $d_0\not \in [0,\frac{1}{8}\pi^2]$},\\
	n\pi, &\text{if $\bc_1\ne \bc_3$ and $d_0\in [0, \frac{1}{8}\pi^2]\cup[\frac{3}{8}\pi^2,\frac{1}{2}\pi^2]$},\\
	(n+\frac{1}{2})\pi, &\text{if $\bc_1\ne \bc_3$ and $d_0\not \in [0,\frac{1}{8}\pi^2]\cup [\frac{3}{8}\pi^2,\frac{1}{2}\pi^2]$}.
\end{cases}
\ee

The next result states that the choices in \eqref{allcases} are critical in ensuring that the following auxiliary solution $\au$ satisfying \eqref{helmholtz:aux} is well defined. Furthermore, sufficient conditions under which the Dirichlet trace of an auxiliary function $\au$ belongs to $H^{\frac{1}{2}}(\partial \Omega)$ and the Neumann trace in the $x$-direction of an auxiliary function $\au$ belongs to $L^{2}(\partial \Omega)$ are presented. This allows us to fall back to the cases discussed in \cref{sec:wellposed}; more specifically, with $g_j$ replaced by $g_j - \mathcal{B}_j(\au)$ on $\Gamma_j$ for each $j \in \{2,4\}$. The proof of the following result is deferred to \cref{sec:proofthm}.
	
	\begin{prop}
		\label{traceL2}
		Assume $g_{1} \in \mathcal{Z}^{\frac{1}{2}}(\Gamma_1)$ if $\mathcal{B}_{1}=\frac{\partial}{\partial \nu}$. Otherwise, assume $g_{1} \in \mathcal{Z}^{\frac{3}{2}}(\Gamma_1)$ if $\mathcal{B}_{1}=\id$. Suppose that $\{\tilde{\mu}_n\}_{n\in\NN}$ are chosen according to \eqref{allcases}. Let the auxiliary function $\au$ take the following form
		\begin{equation} \label{auxsol}
			\au=\sum_{n=0}^{\infty} \wh{g}_{1}(n)\tilde{X}_{n}(x)\tilde{Y}_{n}(y) \quad \mbox{with}\quad \wh{g}_{1}(n):=\int_{\Gamma_1}g_{1}(x) \tilde{X}_{n}(x) dx,
		\end{equation}
		where $\{Y_n\}_{n \in \NN}$ solve
		\begin{align}
			\label{1dYtil}
			& \tilde{Y}_{n}''(y)+(k^{2}-\tilde{\mu}_{n}^{2})\tilde{Y}_{n}(y) = 0 \quad \text{in} \quad \mathcal{I}:=(0,1), \quad n\in \N_{0},\\
			\label{1dYtil:bc}
			& \mathcal{B}_{1}\tilde{Y}_{n}(0)=1, \quad \mathcal{B}_{3}\tilde{Y}_{n}(1) = 0,
		\end{align}
		with $\mathcal{B}_1,\mathcal{B}_3 \in \{\id,\tfrac{\partial}{\partial \nu}\}$.
		Then, the auxiliary function $\au$ in \eqref{auxsol} satisfies \eqref{helmholtz:aux} and each term of $\au$ is well defined. Furthermore, we have that $\au \in H^{1}(\Omega)$, the (Dirichlet) trace of $\au$ is in $H^{\frac{1}{2}}(\partial \Omega)$, and the trace of $\au_x$ (i.e., the Neumann trace in the $x$-direction of $\au$) is in $L^{2}(\partial \Omega)$.
	\end{prop}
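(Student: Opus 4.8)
The plan is to build $\au$ mode by mode, convert every assertion into a one-dimensional statement about the profiles $\tilde Y_n$, and then sum the resulting series with the weights supplied by $g_1\in\mathcal{X}^{1/2}$ or $g_1\in\mathcal{X}^{3/2}$. First I would solve \eqref{1dYtil}--\eqref{1dYtil:bc} explicitly: writing $\beta_n^2:=k^2-\tilde\mu_n^2$, the general solution is trigonometric, affine, or hyperbolic according as $\beta_n^2>0$, $=0$, or $<0$, and imposing $\mathcal{B}_1\tilde Y_n(0)=1$, $\mathcal{B}_3\tilde Y_n(1)=0$ yields an explicit quotient whose denominator is one of $\sin\beta_n,\cos\beta_n,\sinh(\cdot),\cosh(\cdot)$, depending on the pair $(\mathcal{B}_1,\mathcal{B}_3)$. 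Since $\tilde X_n''=-\tilde\mu_n^2\tilde X_n$, each product $\tilde X_n\tilde Y_n$ satisfies $\Delta(\tilde X_n\tilde Y_n)+k^2\tilde X_n\tilde Y_n=0$ exactly when $\tilde Y_n$ solves \eqref{1dYtil}; summing against $\wh{g}_1(n)$ and invoking \eqref{g:fourier}, the normalisations $\mathcal{B}_1\tilde Y_n(0)=1$ and $\mathcal{B}_3\tilde Y_n(1)=0$ reproduce $\mathcal{B}_1\au=g_1$ on $\Gamma_1$ and $\mathcal{B}_3\au=0$ on $\Gamma_3$, so \eqref{helmholtz:aux} holds termwise (and for the sum once $H^1$-convergence is established).

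The step I expect to be the main obstacle is showing that every $\tilde Y_n$ is well defined with a denominator bounded away from zero uniformly in $n$; this is precisely what the case split \eqref{allcases} is engineered to deliver. The homogeneous version of \eqref{1dYtil}--\eqref{1dYtil:bc} has eigenvalues $\{m^2\pi^2:m\in\NN\}$ when $\mathcal{B}_1=\mathcal{B}_3$ and $\{(m+\tfrac12)^2\pi^2:m\in\NN\}$ when $\mathcal{B}_1\ne\mathcal{B}_3$, and $\tilde Y_n$ degenerates exactly when $\beta_n^2=k^2-\tilde\mu_n^2$ meets one of these. I would prove $\mathrm{dist}(\beta_n^2,\{\text{eigenvalues}\})\ge\tfrac18\pi^2$ for all $n$ by inserting $\tilde\mu_n\in\{n\pi,(n+\tfrac12)\pi\}$: in each of the four cases of \eqref{allcases} the number $k^2-\tilde\mu_n^2-(\text{eigenvalue})$ reduces to $k^2$ minus a point of $\pi^2\Z$, $\pi^2(\tfrac14+\Z)$, or $\pi^2(\tfrac12+\Z)$, so the gap is at least $d_0$, $\mathrm{dist}(k^2,\pi^2(\tfrac14+\Z))$, or $d_1$; the thresholds $\tfrac18\pi^2,\tfrac38\pi^2$ together with $d_0+d_1=\tfrac12\pi^2$ then force each of these to be no smaller than $\tfrac18\pi^2$. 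This uniform spectral gap guarantees that no denominator vanishes, so each $\tilde Y_n$ is well defined, and it furnishes the quantitative control of the denominators used in the decay estimates below.

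With the gap in hand I would obtain $\au\in H^1(\Omega)$. Because $\{\tilde X_n\}$ and $\{\tilde X_n'\}$ are both orthogonal in $L^2(\mathcal{I})$, the $H^1$ norm splits with no cross terms, $\|\au\|_{1,\Omega}^2=\sum_{n}|\wh{g}_1(n)|^2\big((1+\tilde\mu_n^2)\|\tilde Y_n\|_{0,\mathcal{I}}^2+\|\tilde Y_n'\|_{0,\mathcal{I}}^2\big)$. In the hyperbolic regime the explicit formulas give $\|\tilde Y_n\|_{0,\mathcal{I}}^2\sim\gamma_n^{-1}$ and $\|\tilde Y_n'\|_{0,\mathcal{I}}^2\sim\gamma_n$ with $\gamma_n=\sqrt{\tilde\mu_n^2-k^2}\sim\tilde\mu_n$, so the summand is $O(\tilde\mu_n)$ for $\mathcal{B}_1=\id$ and $O(\tilde\mu_n^{-1})$ for $\mathcal{B}_1=\tfrac{\partial}{\partial\nu}$; the assumed weighted summability then closes the estimate. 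Once $\au\in H^1(\Omega)$, the Dirichlet trace lies in $H^{1/2}(\partial\Omega)$ for free by the standard trace theorem, settling that conclusion.

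It remains to prove $\au_x\in L^2(\partial\Omega)$, which I would split by edge orientation. On the horizontal edges, orthogonality of $\{\tilde X_n'\}$ gives $\|\au_x(\cdot,y_0)\|_{0,\Gamma}^2=\sum_n|\wh{g}_1(n)|^2\tilde\mu_n^2|\tilde Y_n(y_0)|^2$, which is finite under the stated hypotheses. On the vertical edges the restrictions $\tilde Y_n(\cdot)$ are no longer orthogonal, so I would control the Gram matrix $\langle\tilde Y_n,\tilde Y_m\rangle_{\mathcal{I}}$, which is of Hilbert-matrix type $\sim(\gamma_n+\gamma_m)^{-1}$ in the decaying regime, by a Schur or Hilbert-inequality estimate, and again track the powers of $\tilde\mu_n$. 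The only delicate bookkeeping is this power counting; the extra order demanded of $g_1$ when $\mathcal{B}_1=\id$ compensates for the fact that there $g_1$ prescribes the trace value itself rather than a normal derivative, and the regularity indices in the statement are exactly those that keep every sum finite.
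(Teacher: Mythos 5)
Your overall architecture matches the paper's: explicit mode-by-mode solution of \eqref{1dYtil}--\eqref{1dYtil:bc}, a spectral-gap argument driven by \eqref{allcases} to rule out degenerate denominators (this is exactly the content of \cref{lem:tYn}, where the uniform gap $\tfrac18\pi^2$ on $|(k\tilde\lambda_n)^2-j^2\pi^2|$ is converted into the lower bound $\inf_j|k\tilde\lambda_n-j\pi|\ge \tfrac{\pi/8}{1+2\pi^{-1}k\tilde\lambda_n}$), orthogonality of $\{\tilde X_n\}$ and $\{\tilde X_n'\}$ to diagonalize the $H^1$ norm, the asymptotics $\|\tilde Y_n\|^2_{0,\mathcal I}\lesssim \tilde\mu_n^{-1}$ (resp.\ $\tilde\mu_n^{-3}$) and $\|\tilde Y_n'\|^2_{0,\mathcal I}\lesssim\tilde\mu_n$ (resp.\ $\tilde\mu_n^{-1}$) in the Dirichlet (resp.\ Neumann) case, and the standard trace theorem for the $H^{1/2}(\partial\Omega)$ conclusion. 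One small imprecision: the denominators are not bounded away from zero \emph{uniformly} in $n$ and $k$ (the bound degrades like $(1+k\tilde\lambda_n)^{-1}$ over the finitely many oscillatory modes), but non-vanishing plus the hyperbolic growth for $n\ge N_e$ is all this proposition needs, so that does not hurt you.

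Where you genuinely diverge is the last claim, $\au_x\in L^2(\partial\Omega)$, and here your route has a gap. The paper instead proves that the termwise-differentiated series for $\au_{xx}$ and $\au_{xy}$ converge in $L^2(\Omega)$ (this is precisely where the full strength of $g_1\in\mathcal X^{3/2}$ is consumed when $\mathcal B_1=\id$, since $\|\tilde X_n''\|_{0,\mathcal I}=\tilde\mu_n^2$ forces $\sum_n|\wh g_1(n)|^2\tilde\mu_n^3<\infty$), concludes $\au_x\in H^1(\Omega)$, and then invokes the multiplicative trace inequality $\|\au_x\|_{0,\partial\Omega}^2\le C\|\au_x\|_{1,\Omega}\|\au_x\|_{0,\Omega}$ from Grisvard. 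You instead sum the boundary restriction edge by edge; your power counting there is correct (on $\Gamma_1$ with $\mathcal B_1=\id$ the horizontal edge needs $\sum_n|\wh g_1(n)|^2\tilde\mu_n^2<\infty$, and the vertical edges close via Cauchy--Schwarz without even needing the Hilbert-matrix refinement). But convergence of the formal boundary series in $L^2$ of each edge does not by itself prove that \emph{the trace of} $\au_x$ lies in $L^2(\partial\Omega)$: the trace operator is not defined on $L^2(\Omega)$, so you must first establish enough interior regularity of $\au_x$ (e.g.\ $\au_x\in H^s(\Omega)$ for some $s>\tfrac12$, or $H^1$ as in the paper) to know the trace exists and coincides with your boundary series. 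Relatedly, you never verify that the term-by-term $x$-derivative actually equals the weak derivative $\partial_x\au$; the paper handles both points at once through the partial-sum convergence argument for $\au_x,\au_{xx},\au_y,\au_{xy}$. Your approach can be repaired by adding that step, and it would then yield the slightly sharper edgewise information, but as written the identification of the boundary series with the trace is missing.
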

	
	Next, we study upper bounds of an auxiliary function satisfying \eqref{helmholtz:aux} defined in \eqref{auxsol}.
	
	 \begin{theorem}\label{thm:stability3}
		Consider an auxiliary function $\au$ satisfying \eqref{helmholtz:aux} defined in \eqref{auxsol}, which takes into account of \eqref{allcases}. Then,
		\begin{itemize}
			\item[(1)]
			For $\mathcal{B}_{1}=\tfrac{\partial}{\partial \nu}$, $\mathcal{B}_{3}\in \{\id,\tfrac{\partial}{\partial \nu}\}$, and $g_{1} \in L^{2}(\Gamma_1)$, the auxiliary function $\au$ satisfies
			\begin{equation} \label{Ck:auxsol:neum}
				\|\nabla \au\|_{0,\Omega} + k \|\au\|_{0,\Omega} \le 2\sqrt{717} \max\{k,1\} \|g_{1}\|_{0,\Gamma_{1}}, \quad \forall k>0.
			\end{equation}
			\item[(2)]
			For $\mathcal{B}_{1}=\id$, $\mathcal{B}_{3}\in \{\id,\tfrac{\partial}{\partial \nu}\}$, and $g_{1} \in \mathcal{Z}^{\frac{1}{2}}(\Gamma_1)$, the auxiliary function $\au$ satisfies
			 \begin{equation}\label{Ck:auxsol:diric}
				\|\nabla \au \|_{0,\Omega} + k \| \au \|_{0,\Omega} \le 2\sqrt{43} \left( \max\{k^2,1\}\|g_{1}\|_{0,\Gamma_{1}} +\max\{k^{\frac{1}{2}},1\}\|g_{1}\|_{\mathcal{Z}^{\frac{1}{2}}(\Gamma_{1})}\right), \quad \forall k>0.
			\end{equation}
		\end{itemize}
	\end{theorem}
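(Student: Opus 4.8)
The plan is to exploit the separated structure of $\au$ in \eqref{auxsol} to turn the two-dimensional estimate into a family of one-dimensional estimates on the profiles $\tilde{Y}_n$. Since $\{\tilde{X}_n\}_{n\in\NN}$ is orthonormal in $L^2(\mathcal{I})$ and, as noted after \eqref{g:fourier}, $\{\tilde{X}_n'\}_{n\in\NN}$ is orthogonal with $\|\tilde{X}_n'\|_{0,\mathcal{I}}^2=\tilde{\mu}_n^2$, differentiating \eqref{auxsol} termwise and integrating in $x$ first yields
\[
\|\au\|_{0,\Omega}^2=\sum_{n}|\wh{g}_1(n)|^2\|\tilde{Y}_n\|_{0,\mathcal{I}}^2,\qquad \|\au_x\|_{0,\Omega}^2=\sum_{n}|\wh{g}_1(n)|^2\tilde{\mu}_n^2\|\tilde{Y}_n\|_{0,\mathcal{I}}^2,\qquad \|\au_y\|_{0,\Omega}^2=\sum_{n}|\wh{g}_1(n)|^2\|\tilde{Y}_n'\|_{0,\mathcal{I}}^2.
\]
Thus it suffices to bound the three one-dimensional quantities $\|\tilde{Y}_n\|_{0,\mathcal{I}}$, $\tilde{\mu}_n\|\tilde{Y}_n\|_{0,\mathcal{I}}$ and $\|\tilde{Y}_n'\|_{0,\mathcal{I}}$, and then sum against $|\wh{g}_1(n)|^2$, using $\sum_n|\wh{g}_1(n)|^2=\|g_1\|_{0,\Gamma_1}^2$ and $\sum_n|\wh{g}_1(n)|^2\tilde{\mu}_n=\|g_1\|_{\mathcal{X}^{1/2}(\Gamma_1)}^2$.

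Next I would solve \eqref{1dYtil}--\eqref{1dYtil:bc} explicitly. Writing $\beta_n:=\sqrt{k^2-\tilde{\mu}_n^2}$ (interpreted as $i\sqrt{\tilde{\mu}_n^2-k^2}$ when $\tilde{\mu}_n>k$), the boundary conditions give closed forms of the type $\tilde{Y}_n(y)=\sin(\beta_n(1-y))/\sin\beta_n$ when $\mathcal{B}_1=\id$ and $\tilde{Y}_n(y)=\sin(\beta_n(1-y))/(\beta_n\cos\beta_n)$ when $\mathcal{B}_1=\tfrac{\partial}{\partial\nu}$, with the numerator replaced by the corresponding cosine when $\mathcal{B}_3=\tfrac{\partial}{\partial\nu}$; all four sub-cases are handled identically. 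The decisive feature is that the denominators vanish exactly at the $y$-eigenvalues $\{m^2\pi^2\}$ (for $\mathcal{B}_1=\mathcal{B}_3$) or $\{(m+\tfrac12)^2\pi^2\}$ (for $\mathcal{B}_1\ne\mathcal{B}_3$) of $\beta_n^2$. Here is where the choice \eqref{allcases} enters: a short computation with $d_0,d_1$ (using $d_0+d_1=\tfrac12\pi^2$) shows that, for whichever family $\{\tilde{\mu}_n\}$ is selected, one has the uniform gap $|k^2-\tilde{\mu}_n^2-\nu^2|\ge\tfrac18\pi^2$ for every such $y$-eigenvalue $\nu^2$ and every $n$. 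This is exactly the non-resonance guarantee already used in \cref{traceL2} to make each term well defined, and it translates into lower bounds $|\sin\beta_n|\gtrsim 1/k$ and $|\cos\beta_n|\gtrsim 1/k$ on the finitely many oscillatory modes $\tilde{\mu}_n<k$, while for the evanescent modes $\tilde{\mu}_n>k$ the hyperbolic sine/cosine quotients are bounded uniformly in $k$ and $n$.

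With these ingredients I would split the sum over $n$ into the oscillatory modes $\tilde{\mu}_n<k$ (which include the single near-resonant mode) and the evanescent modes $\tilde{\mu}_n\ge k$. For the evanescent modes the profiles decay like $e^{-\gamma_n y}$ with $\gamma_n=\sqrt{\tilde{\mu}_n^2-k^2}$, giving $\|\tilde{Y}_n\|_{0,\mathcal{I}}^2\lesssim \gamma_n^{-1}$ and $\|\tilde{Y}_n'\|_{0,\mathcal{I}}^2\lesssim\gamma_n$. In case (1) the extra factor $1/|\beta_n|=1/\gamma_n$ supplied by the Neumann condition improves this decay enough that the entire evanescent tail sums to a multiple of $\sum_n|\wh{g}_1(n)|^2=\|g_1\|_{0,\Gamma_1}^2$, so an $L^2$ datum suffices and only the weight $\max\{k,1\}$ survives. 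In case (2) this factor is absent: the evanescent tail then sums against the weight $\tilde{\mu}_n$, producing $\|g_1\|_{\mathcal{X}^{1/2}(\Gamma_1)}$, while the generic oscillatory modes contribute $\sum_{\tilde{\mu}_n<k}|\wh{g}_1(n)|^2\tilde{\mu}_n^2\le k\,\|g_1\|_{\mathcal{X}^{1/2}(\Gamma_1)}^2$ (since $\tilde{\mu}_n<k$), which is the source of the $\max\{k^{1/2},1\}$ weight; the near-resonant mode, where $\|\tilde{Y}_n\|_{0,\mathcal{I}}\lesssim k$, yields the $\max\{k^2,1\}\|g_1\|_{0,\Gamma_1}$ term. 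Collecting the pieces and applying $a+b\le\sqrt2\sqrt{a^2+b^2}$ gives \eqref{Ck:auxsol:neum} and \eqref{Ck:auxsol:diric}, the explicit constants $2\sqrt{717}$ and $2\sqrt{43}$ arising from the accumulated numerical factors.

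I expect the main obstacle to be the uniform resonance-avoidance estimate together with the sharp treatment of the near-resonant mode: one must convert the additive gap $|k^2-\tilde{\mu}_n^2-\nu^2|\ge\tfrac18\pi^2$ into multiplicative lower bounds on $|\sin\beta_n|$ and $|\cos\beta_n|$ that are correct in their $k$-dependence (losing a single power of $k$, not more), since any slack here would spoil the claimed $\max\{k^2,1\}$ and $\max\{k^{1/2},1\}$ weights. Handling the case distinctions of \eqref{allcases} uniformly across the four choices of $(\mathcal{B}_1,\mathcal{B}_3)$, and verifying that the evanescent quotients are bounded independently of $k$ and $n$, are the remaining bookkeeping tasks.
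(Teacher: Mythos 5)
Your proposal is correct and follows essentially the same route as the paper: separation of variables plus Parseval reduces everything to the one--dimensional profiles $\tilde{Y}_n$ (computed explicitly in \cref{normsaux}), the additive gap coming from \eqref{allcases} is converted into the lower bounds $|\sin(k\tilde{\lambda}_n)|,|\cos(k\tilde{\lambda}_n)|\gtrsim 1/k$ (the content of \cref{lem:tYn} and \eqref{lbcos}), and the mode sum is split into propagating and evanescent parts, with the extra $1/\beta_n$ in the Neumann case keeping the bound at $\max\{k,1\}\|g_1\|_{0,\Gamma_1}$ and the evanescent tail weighted by $\tilde{\mu}_n$ producing the $\mathcal{X}^{\frac{1}{2}}(\Gamma_1)$ norm. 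The only cosmetic difference is bookkeeping in case (2): in the paper the $\max\{k^{\frac{1}{2}},1\}\,\|g_1\|_{\mathcal{X}^{\frac{1}{2}}(\Gamma_1)}$ term comes entirely from the evanescent modes near cutoff, while all propagating modes (near-resonant or not) are absorbed into the $\max\{k^2,1\}\,\|g_1\|_{0,\Gamma_1}$ term.
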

	
	Note that by symmetry, the same results as above hold when $\mathcal{B}_{1} \tilde{u}=0$ on $\Gamma_1$ and $\mathcal{B}_3 \au=g_3$ on $\Gamma_3$ in \eqref{helmholtz:aux}. Also, the conditions imposed on $g_1$ in \cref{thm:stability3} are weaker compared to those in \cref{traceL2}, because in the former, we are only interested in finding an upper bound of the norm of an auxiliary solution and do not consider whether its traces belong to particular spaces or not. The following example demonstrates how the stability bounds \eqref{Ck:auxsol:neum} and \eqref{Ck:auxsol:diric} are sharp in the sense that the right-hand sides of \eqref{Ck:auxsol:neum} and \eqref{Ck:auxsol:diric} hold up to a constant multiple (independent of $k$ and $g_1$).
	
	\begin{example}
		\normalfont
		In what follows, suppose that the conditions of \cref{thm:stability3} hold.
		Note that the source term in \eqref{helmholtz:aux} vanishes. Suppose that $\mathcal{B}_1=\mathcal{B}_3=\tfrac{\partial}{\partial \nu}$ and $g_1 \in L^{2}(\Gamma_1)$. Consider $k^2:=(n\pi)^2 + \tfrac{1}{4}\pi^2$ for a temporarily fixed integer $n \in \mathbb{N}$. Since $d_0 \notin [0,\tfrac{1}{8}\pi^2]$ in \eqref{allcases}, let $\au = - \tfrac{2}{\pi} \tilde{X}_n(x) \cos(\tfrac{\pi}{2}(y-1))$ be an auxiliary solution, where $\tilde{X}_n(x)=\sqrt{2}\cos(\tilde{\mu}_n x)$ or $\tilde{X}_n(x)=\sqrt{2}\sin(\tilde{\mu}_n x)$ with $\tilde{\mu}_n=n\pi$. Then $g_1 = \mathcal{B}_1 \au =\tilde{X}_n$ on $\Gamma_1$ and hence $\|g_1\|_{0,\Gamma_1}=1$. The auxiliary solution $\au = - \tfrac{2}{\pi} \tilde{X}_n(x) \cos(\tfrac{\pi}{2}(y-1))$ satisfies
		\[
		\| \nabla \au\|_{0,\Omega} + k \|\au\|_{0,\Omega} = \frac{2\sqrt{2}}{\pi} k  \|g_1\|_{0,\Gamma_1} \quad \text{with} \quad k:=\sqrt{(n\pi)^2 +\tfrac{1}{4} \pi^2}, \quad \forall n \in \mathbb{N}.
		\]
		
		Suppose that $\mathcal{B}_1=\tfrac{\partial}{\partial \nu}$, $\mathcal{B}_3=\id$, and $g_1 \in L^{2}(\Gamma_1)$. Consider $k^2:=(n\pi)^2 + \pi^2$ for a temporarily fixed integer $n \in \mathbb{N}$. Since $d_0 \in [0,\tfrac{1}{8}\pi^2]$, let $\au = \tfrac{1}{\pi}\tilde{X}_n(x) \sin(\pi(y-1))$ be an auxiliary solution, where $\tilde{X}_n(x)=\sqrt{2}\cos(\tilde{\mu}_n x)$ or $\tilde{X}_n(x)=\sqrt{2}\sin(\tilde{\mu}_n x)$ with $\tilde{\mu}_n=n\pi$. Then $g_1 = \mathcal{B}_1 \au =\tilde{X}_n$ on $\Gamma_1$ and hence $\|g_1\|_{0,\Gamma_1}=1$. The auxiliary solution $\au = \tfrac{1}{\pi}\tilde{X}_n(x) \sin(\pi(y-1))$ satisfies
		\[
		\| \nabla \au\|_{0,\Omega} + k \|\au\|_{0,\Omega} = \frac{\sqrt{2}}{\pi} k \|g_1\|_{0,\Gamma_1} \quad \text{with} \quad k:= \sqrt{(n\pi)^2+\pi^2}, \quad \forall n \in \mathbb{N}.
		\]
		
		Suppose that $\mathcal{B}_1=\id$, $\mathcal{B}_3=\tfrac{\partial}{\partial \nu}$, and $g_1 \in \mathcal{Z}^{\frac{1}{2}}(\Gamma_1)$. Consider $k^{2}:=(\theta_n + \theta_n^{-1})^{2} + (\theta_n - \tfrac{1}{2}\pi)^2$, where $\theta_n:=(n+\tfrac{1}{2})\pi$ for a temporarily fixed integer $n \in \mathbb{N}$. Since $d_0 \in [\tfrac{3}{8}\pi^2,\tfrac{1}{2}\pi^2]$ in \eqref{allcases}, let $\tilde{u} = \tilde{X}_n (x) \tfrac{\cos((\theta_n + \theta_n^{-1})(y-1))}{\cos((\theta_n + \theta_n^{-1}))}$ be an auxiliary solution, where $\tilde{X}_n(x)=\sqrt{2} \sin(\tilde{\mu}_n x)$ or $\tilde{X}_n(x)=\sqrt{2} \cos(\tilde{\mu}_n x)$ with $\tilde{\mu}_n=n\pi$. Then $g_1 = \mathcal{B}_1 \au =\tilde{X}_n$ on $\Gamma_1$ and hence $\|g_1\|_{0,\Gamma_1}=1$. The auxiliary solution $\tilde{u} = \tilde{X}_n(x) \tfrac{\cos((\theta_n + \theta_n^{-1})(y-1))}{\cos((\theta_n + \theta_n^{-1}))}$ satisfies
		\begin{align*}
		\| \nabla \au\|_{0,\Omega}^2 & + k^2 \| \au \|_{0,\Omega}^{2} = \frac{k^{2} + \tilde{\mu}_n^2 \tfrac{\sin(2(\theta_n+\theta_n^{-1}))}{2(\theta_n+\theta_n^{-1})}}{\cos^{2}(\theta_n + \theta_n^{-1})} = \frac{k^{2} + \tilde{\mu}_n^2 \tfrac{\sin(2(\theta_n+\theta_n^{-1}))}{2(\theta_n+\theta_n^{-1})}}{\sin^{2}(\theta_n^{-1})} \ge \theta_n^{2} k^{2} + \tilde{\mu}_n^2 \theta_n^2 \tfrac{\sin(2(\theta_n+\theta_n^{-1}))}{2(\theta_n+\theta_n^{-1})}\\
		& = \theta_n^{2}(\theta_n + \theta_n^{-1})^2 + \tilde{\mu}_n^2 \theta_n^2 \left(1+\frac{\sin(2(\theta_n+\theta_n^{-1}))}{2(\theta_n+\theta_n^{-1})}\right)\\
		& \ge \min\left\{\frac{\theta_n^{2}(\theta_n + \theta_n^{-1})^2}{((\theta_n + \theta_n^{-1})^{2} + \tilde{\mu}_n^2)^2}, \frac{\tilde{\mu}_n^2 \theta_n^{2}}{2((\theta_n + \theta_n^{-1})^{2} + \tilde{\mu}_n^2)} \right\} (k^4 + k^2)\\
		& \ge \min\left\{\frac{\theta_n^{2}}{4(\theta_n + \theta_n^{-1})^2}, \frac{\pi^2 \theta_n^{2}}{4(\theta_n + \theta_n^{-1})^2} \right\} (k^4 + k \tilde{\mu}_n)\\
		& = \frac{\theta_1^{2}}{4(\theta_1 + \theta_1^{-1})^2} (k^4 + k \tilde{\mu}_n) = \frac{81\pi^4}{4(9\pi^2+4)^2}(k^4 + k \tilde{\mu}_n),
		\end{align*}
		where we used the fact that $|\sin(x)| \le |x|$ for all $x \ge 0$ to arrive at the first inequality. Using the basic inequality $a^2+b^2\ge \tfrac{1}{\sqrt{2}}(a+b)$ for nonnegative numbers $a$ and $b$, we have
		\[
		\| \nabla \au\|_{0,\Omega} + k \| \au \|_{0,\Omega} \ge \frac{9\pi^2}{2\sqrt{2}(9\pi^2 + 4)} \left(k^2 \|g_1\|_{0,\Gamma_1} + k^{\frac{1}{2}} \|g_1\|_{\mathcal{Z}^{\frac{1}{2}}(\Gamma_1) }\right),
		\]
		where $k:=\sqrt{(\theta_n + \theta_n^{-1})^{2} + \tilde{\mu}_n^2}$ with $\theta_n = (n+\tfrac{1}{2})\pi$ and $\tilde{\mu}_n=\theta_n-\tfrac{1}{2}\pi$ for all $n \in \mathbb{N}$.
		
		Suppose that $\mathcal{B}_1=\id$, $\mathcal{B}_3=\id$, $g_1 \in \mathcal{Z}^{\frac{1}{2}}(\Gamma_1)$, and $g_3=0$. Consider $k^{2}:=(\theta_n + \theta_n^{-1})^{2} + \theta_n^2$, where $\theta_n:=n\pi$ for a temporarily fixed integer $n \in \mathbb{N}$. Since $d_0 \notin [0,\tfrac{1}{8} \pi^2]$ in \eqref{allcases}, let $\au = -\tilde{X}_n(x) \tfrac{\sin((\theta_n + \theta_n^{-1})(y-1))}{\sin(\theta_n + \theta_n^{-1})}$ be an auxiliary solution, where $\tilde{X}_n=\sqrt{2} \sin(\tilde{\mu}_n x)$ or $\tilde{X}_n=\sqrt{2} \cos(\tilde{\mu}_n x)$ with $\tilde{\mu}_n=\theta_n=n\pi$. Then $g_1 = \mathcal{B}_1 \au =\tilde{X}_n$ on $\Gamma_1$ and hence $\|g_1\|_{0,\Gamma_1}=1$. The auxiliary solution $\tilde{u} = -\tilde{X}_n(x) \tfrac{\sin((\theta_n + \theta_n^{-1})(y-1))}{\sin(\theta_n + \theta_n^{-1})}$ satisfies
		\begin{align*}
			\| \nabla \au \|_{0,\Omega}^2 & + k^2 \| \au \|_{0,\Omega}^2 = \frac{k^2 - \theta_n^2 \tfrac{\sin(2(\theta_n + \theta_n^{-1}))}{2(\theta_n + \theta_n^{-1})}}{\sin^{2}(\theta_n + \theta_n^{-1})}
			=\frac{(\theta_n + \theta_n^{-1})^2 + \theta_n^2 \left(1 - \tfrac{\sin(2(\theta_n + \theta_n^{-1}))}{2(\theta_n + \theta_n^{-1})}\right)}{\sin^{2}(\theta_n^{-1})}\\
			& \ge \theta_n^2 (\theta_n + \theta_n^{-1})^2 + \theta_n^4 \left(1 - \frac{\sin(2(\theta_n + \theta_n^{-1}))}{2(\theta_n + \theta_n^{-1})}\right)\\
			& \ge \frac{\theta_1^{2}}{4(\theta_1 + \theta_1^{-1})^2} (k^4 + k\tilde{\mu}_n) = \frac{\pi^4}{4(\pi^2+1)^2}(k^4 + k\tilde{\mu}_n),
		\end{align*}
		where we used the same steps as in the previous case, and the fact that $\theta_1 + \theta_1^{-1} > \pi$ and hence $1 - \tfrac{\sin(2(\theta_n + \theta_n^{-1}))}{2(\theta_n + \theta_n^{-1})} > \tfrac{1}{2}$ for all $n \ge 1$ to move from the first inequality to the second inequality. Using the basic inequality $a^2+b^2\ge \tfrac{1}{\sqrt{2}}(a+b)$ for nonnegative numbers $a$ and $b$, we have
		\[
		\| \nabla \au\|_{0,\Omega} + k \| \au \|_{0,\Omega} \ge \frac{\pi^2}{2\sqrt{2}(\pi^2 + 1)} \left(k^2 \|g_1\|_{0,\Gamma_1} + k^{\frac{1}{2}} \|g_1\|_{\mathcal{Z}^{\frac{1}{2}}(\Gamma_1) }\right),
		\]
		where $k:=\sqrt{(\theta_n + \theta_n^{-1})^{2} + \theta_n^2}$ with $\theta_n=\tilde{\mu}_n=n\pi$ for all $n \in \mathbb{N}$.
	\end{example}
	
	We close this section with an important final remark. By the superposition principle, the stability bounds for the case where all boundary conditions are inhomogeneous and the source term vanishes can be recovered by using \cref{thm:stability3}, subtracting the traces of the auxiliary solutions from $g_2$ on $\Gamma_2$ and $g_4$ on $\Gamma_4$, using \cref{thm:stability1,thm:stability2}, and finally adding all these bounds. Additionally, if the source term is nonzero, then we also add the stability bound in \cref{thm:stabilityf}.
	
	\section{Proofs of \cref{thm:stability1,thm:stability2,thm:stabilityf,thm:stability3,lem:tYn,traceL2}}
	\label{sec:proofthm}
	
	To prove \cref{thm:stability1}, we need the following result.
	
\begin{lemma}\label{1dX:sol}
Let $\{X_n\}_{n\in \NN}$ be the solutions of	 
the following problem
\begin{align}
	& \label{1dX}
	 X_{n}''(x)+(k^{2}-\mu_{n}^{2})X_{n}(x) = 0 \quad \text{in} \quad \mathcal{I}:=(0,1), \quad n\in \N_{0},\\
	& \label{1dX:bc:g4non0}
	 \mathcal{B}_{4}X_{n}(0)=\delta_{j,4}, \quad \mathcal{B}_{2}X_{n}(1)=\delta_{j,2},
\end{align}
where $\{\mu_{n}=n\pi\}_{n\in \NN}$ or $\{\mu_{n}=(n+\frac{1}{2})\pi\}_{n\in \NN}$ with $j\in \{2,4\}$, $\delta_{j,j}=1$, and $\delta_{j,m}=0$ for $j\neq m$. Define
\[
\lambda_{n}:=\sqrt{\left|1-\tfrac{\mu_{n}^{2}}{k^2}\right|} \quad \text{and} \quad
\mathring{\lambda}_{n}:=
\begin{cases}
	\lambda_{n} &\text{if }\; \mu_n^2 \le k^2,\\
	i \lambda_{n} &\text{if }\; \mu_n^2 > k^2,
\end{cases}
\quad
\forall n \in \NN.
\]
Recall that $\bc_4=\frac{\partial}{\partial \nu}-ik \id$ in \eqref{bc}.
\begin{itemize}
\item[(1)]  If $\mathcal{B}_{4}X_{n}(0)=-X'_{n}(0) -ik X_{n}(0)=1$
and $\mathcal{B}_{2}X_{n}(1)=X_{n}'(1)-ikX_{n}(1)=0$ with $\bc_2=\frac{\partial}{\partial \nu}-ik \id$,
then the solutions $\{X_n\}_{n\in \NN}$ to the problem \eqref{1dX}--\eqref{1dX:bc:g4non0} are given by
\begin{align*}
	X_{n}(x) & =	
	 \tfrac{\rgl((1-\rgl^{2})\cos(k\rgl)\sin(k\rgl(1-x))
	-(1+\rgl^2)\sin(k \rgl x))}
	 {k(4\rgl^2+(1-\rgl^{2})^2\sin^2(k\rgl))}
	  + \tfrac{(1+\rgl^{2}) \cos(k\rgl x)-(1-\rgl^{2})\cos(k\rgl) \cos(k\rgl(1-x))}
	 {k(4\rgl^2+(1-\rgl^{2})^2\sin^2(k\rgl))}
	 i.
\end{align*}
If $\mathcal{B}_{4}X_{n}(0)=-X'_{n}(0) -ik X_{n}(0)=0$ and $\mathcal{B}_{2}X_{n}(1)
=X_{n}'(1)-ikX_{n}(1)=1$ with $\bc_2=\frac{\partial }{\partial \nu}-\id$, then the solutions $\{X_n\}_{n\in \NN}$ to
\eqref{1dX}--\eqref{1dX:bc:g4non0} are given above with $x$ replaced by $1-x$.
Moreover, for both cases,
the norms $\|X_n\|_{0,\mathcal{I}}$ and
$\|X_n'\|_{0,\mathcal{I}}$  are given by
\begin{align*}
\|X_{n}\|^{2}_{0,\mathcal{I}} =
\tfrac{k\rgl(1+\rgl^2)-(1-\rgl^2)\cos(k\rgl)\sin(k\rgl)}
{2k^3 \rgl(4 \rgl^2+(1-\rgl^2)^2\sin^2(k \rgl))}, \quad
 \|X'_{n}\|^{2}_{0,\mathcal{I}} =
\tfrac
{k\rgl^2(1+\lambda_{n}^2)-\rgl(\rgl^2-1)\cos(k\rgl)\sin(k\rgl)}
{2k(4\rgl^2+(1-\rgl^2)^2\sin^2(k\rgl))}.
\end{align*}

\item[(2)] If $\mathcal{B}_{4}X_{n}(0)=1$ and $\mathcal{B}_{2}X_{n}(1)=\alpha X_{n}'(1) + (1-\alpha)X_{n}(1)=0$ with $\alpha \in \{0,1\}$, then the solutions $\{X_n\}_{n\in \NN}$ to the problem \eqref{1dX}--\eqref{1dX:bc:g4non0} are given by
\begin{align*}
	X_n(x) =
	 \tfrac{-\rgl(\cos(k\rgl)\sin(k\rgl(1-x))+\alpha\sin(k\rgl x))}
	{k((1-\rgl^2)\cos^2(k\rgl) + \rgl^2-(1-\alpha)(1+\rgl^2))}
	 +\tfrac{\cos(k\rgl)\cos(k\rgl(1-x)) -(1-\alpha) \cos(k\rgl x)}{k((1-\rgl^2)\cos^2(k\rgl) + \rgl^2-(1-\alpha)(1+\rgl^2))}i.
\end{align*}
Moreover, the norms $\|X_n\|_{0,\mathcal{I}}$ and
$\|X_n'\|_{0,\mathcal{I}}$  are given by
\begin{align*}
\|X_{n}\|^{2}_{0,\mathcal{I}} =
	\tfrac{\sin(k\rgl) \cos(k\rgl) +(-1)^{1-\alpha}k\rgl}
	{2k^3 \rgl \left(\left(1-\rgl^2 \right)
	 \cos^{2}(k\rgl)+\lambda_n^{2}-(1-\alpha)(1+\lambda_{n}^{2}) \right)},
\quad
\|X'_{n}\|^{2}_{0,\mathcal{I}} =
		\tfrac{\rgl\left( -\sin \left(k\rgl \right) \cos \left(k\rgl\right) + (-1)^{1-\alpha} k\rgl \right)}
		{2k\left(\cos^{2}\left(k \rgl \right)(1 -
		 \gl^{2})+\gl^{2}-(1-\alpha)(1+\lambda_n^2)\right)}.
\end{align*}

\item[(3)] If $\mathcal{B}_{4}X_{n}(0)=0$ and $\mathcal{B}_{2}X_{n}(1)=\alpha X_{n}'(1) + (1-\alpha)X_{n}(1)=1$ with $\alpha \in \{0,1\}$, then the solutions $\{X_n\}_{n\in \NN}$ to the problem \eqref{1dX}--\eqref{1dX:bc:g4non0} are given by
\begin{align*}
	X_n(x) & =
	\tfrac{\sin(k\rgl x)(\alpha \cos(k\rgl) -(1-\alpha) \sin(k\rgl)) - \cos(k\rgl x) \rgl^2 (\alpha \sin(k\rgl) + (1-\alpha) \cos(k\rgl))}
	{(k \rgl)^\alpha ((1-\rgl^2)\cos^2(k\rgl)+\rgl^2-(1-\alpha)(1+\rgl^2))}\\
	& \quad +\tfrac{\alpha k^{-\alpha} \cos(k\rgl(1-x))-(1-\alpha)k^{-2\alpha}\rgl^{1-\alpha}\sin(k\rgl(1-x))}
	 {(1-\rgl^2)\cos^2(k\rgl)+\rgl^2-(1-\alpha)(1+\rgl^2)}i.
\end{align*}
Moreover, the norms $\|X_n\|_{0,\mathcal{I}}$ and
$\|X_n'\|_{0,\mathcal{I}}$  are given by
\begin{align*}
	& \|X_{n}\|^{2}_{0,\mathcal{I}} =
	 \tfrac{k\rgl(1+\rgl^2)-(1-\rgl^2)\cos(k\rgl)\sin(k\rgl)}
	{2(k\rgl)^{2\alpha+1}(\alpha \rgl^2 + (1-\alpha) + (-1)^{1-\alpha} (1-\rgl^2) \cos^2(k\rgl))},\\
	& \|X'_{n}\|^{2}_{0,\mathcal{I}} =
	\tfrac{k\rgl (1 + \rgl^2) +(1 - \rgl^2) \sin(k\rgl) \cos(k\rgl)}{2(k\rgl)^{2\alpha-1} (\alpha \rgl^2 + (1-\alpha) + (-1)^{1-\alpha} (1-\rgl^2) \cos^2(k\rgl))}.
\end{align*}
\end{itemize}
\end{lemma}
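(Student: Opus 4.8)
The plan is to handle all three cases by one recipe — solve the constant-coefficient ODE \eqref{1dX}, fix the two free constants from the boundary relations \eqref{1dX:bc:g4non0}, and integrate the resulting closed form — with the single unifying device that $k^2-\mu_n^2=k^2\rgl^2$ in \emph{both} regimes (where $\rgl=\gl$ is real when $\mu_n^2\le k^2$ and $\rgl=i\gl$ is purely imaginary when $\mu_n^2>k^2$). Hence the general solution of \eqref{1dX} is $X_n(x)=A\cos(k\rgl x)+B\sin(k\rgl x)$ with $A,B\in\mathbb{C}$, uniformly in $n$. At the outset I would record that, since $\Gamma_4=\{0\}\times(0,1)$ and $\Gamma_2=\{1\}\times(0,1)$ have outward normals in the $-x$ and $+x$ directions, on a one-variable function the operators read $\mathcal{B}_4X_n(0)=-X_n'(0)-ikX_n(0)$ and $\mathcal{B}_2X_n(1)\in\{X_n(1),X_n'(1),X_n'(1)-ikX_n(1)\}$; in cases (2)--(3) the parameter $\alpha\in\{0,1\}$ selects Dirichlet ($\alpha=0$) or Neumann ($\alpha=1$) at $x=1$.

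For each case I would insert $X_n$ and $X_n'$ into the two boundary relations, producing a $2\times2$ linear system for $(A,B)$, and solve by Cramer's rule. The system determinant is complex; rationalizing by its conjugate (for real $\rgl$) is precisely what turns the denominator real and reproduces the quantities in the statement. For instance, in case (1) the determinant equals $-k^2[(1+\rgl^2)\sin(k\rgl)+2i\rgl\cos(k\rgl)]$, whose squared modulus is $k^4(4\rgl^2+(1-\rgl^2)^2\sin^2(k\rgl))$ after using $\cos^2=1-\sin^2$ and the identity $(1+\rgl^2)^2-4\rgl^2=(1-\rgl^2)^2$; similarly cases (2)--(3) produce the factor $(1-\rgl^2)\cos^2(k\rgl)+\rgl^2-(1-\alpha)(1+\rgl^2)$. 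Regrouping the resulting $\cos(k\rgl x),\sin(k\rgl x),\cos(k\rgl(1-x)),\sin(k\rgl(1-x))$ terms then yields the displayed split of $X_n$ into real and imaginary parts. I would exploit the evident duality between cases (2) and (3) (inhomogeneity at $x=0$ versus $x=1$) and the $x\mapsto1-x$ symmetry noted in case (1) to avoid redoing equivalent computations.

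For the norms, write $X_n=P+iQ$ with $P,Q$ the displayed parts (so $|X_n|^2=P^2+Q^2$ in the oscillatory regime) and integrate over $\mathcal{I}$ using only $\int_0^1\cos^2(ax)\,dx=\tfrac12+\tfrac{\sin(2a)}{4a}$, $\int_0^1\sin^2(ax)\,dx=\tfrac12-\tfrac{\sin(2a)}{4a}$, and $\int_0^1\sin(ax)\cos(ax)\,dx=\tfrac{\sin^2 a}{2a}$ with $a=k\rgl$, and likewise for $X_n'$. A convenient cross-check, and a shortcut for the second norm, is the energy identity obtained by multiplying \eqref{1dX} by $\overline{X_n}$ and integrating by parts, namely $\|X_n'\|_{0,\mathcal{I}}^2=k^2\rgl^2\|X_n\|_{0,\mathcal{I}}^2+\Re[X_n'\overline{X_n}]_0^1$, whose boundary term is read off directly from \eqref{1dX:bc:g4non0}. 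Collecting terms and simplifying with $\cos^2+\sin^2=1$ then gives the stated closed forms.

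The main obstacle is less conceptual than bookkeeping, with one genuine subtlety: the real/imaginary splitting of $|X_n|^2$ is literal only when $\mu_n^2\le k^2$. In the evanescent regime $\mu_n^2>k^2$ one has $\cos(k\rgl x)=\cosh(k\gl x)$ and $\sin(k\rgl x)=i\sinh(k\gl x)$, so $P,Q$ are no longer the true real and imaginary parts and the elementary integrals above must be replaced by their hyperbolic analogues. I expect the cleanest route is to carry every computation with $\rgl$ as a formal parameter and verify at the end that each stated fraction is manifestly real — the factors of $i$ cancel between numerator and denominator when $\rgl=i\gl$ — so that a single formula covers both regimes, sidestepping a two-regime split at every step. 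The remaining effort is the patient algebraic simplification of the four solution formulas and their eight associated norms.
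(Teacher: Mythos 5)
Your proposal follows essentially the same route as the paper's (very brief) proof: write the general solution of the constant-coefficient ODE, determine the two constants from the boundary conditions, integrate $|\Re X_n|^2+|\Im X_n|^2$ using elementary trigonometric integrals, and obtain the evanescent regime $\mu_n^2>k^2$ by the formal substitution $\rgl=i\gl$, which is exactly the paper's device (the paper uses the basis $e^{\pm ik\gl x}$ rather than $\cos/\sin$, an immaterial difference, and your energy-identity cross-check is a nice extra). The one point you omit is the resonant case $\mu_n^2=k^2$: there your ansatz $A\cos(k\rgl x)+B\sin(k\rgl x)$ degenerates (the solution is affine, $A+Bx$), and the paper recovers the stated formulas, whose denominators carry factors of $\rgl$, by letting $\gl\to 0$; this limit is actually used later for the quantities $\theta_{N_c}$ in the stability proofs, so it should be treated explicitly rather than claiming the trigonometric ansatz holds uniformly in $n$.
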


\begin{proof}
	Recall from the standard ordinary differential equation theory that the solution to \eqref{1dX}-\eqref{1dX:bc:g4non0} for each $n \in \N_{0}$ with $\mu_n^2 < k^2$ takes the form
	\be \label{1dX:AB}
	X_{n}(x) = A_n \exp(ik\lambda_n x) + B_n \exp(-ik\lambda_n x),
	\ee
	where $A_{n},B_{n}$ are uniquely determined by imposing the boundary conditions.
	Then,
	\[
	\|X_n\|^2_{0,\mathcal{I}}= \int_0^1 |\Re(X_n)|^2 + |\Im(X_n)|^2 dx \quad \text{and} \quad
	\|X'_n\|^2_{0,\mathcal{I}}= \int_0^1 |\Re(X'_n)|^2 + |\Im(X'_n)|^2 dx.
	\]
	For $n \in \NN$ with $\mu_n^2 > k^2$, each solution $X_n$ and its norms can be directly obtained by replacing $\gl$ with $i\gl$ in \eqref{1dX:AB}. For $n \in \NN$ such that $\mu_n^2 = k^2$, the solution $X_n$ and its norms can be obtained by letting $\gl$ tend to zero in \eqref{1dX:AB}.
\end{proof}

	The following quantities will be used numerous times in the proofs of \cref{thm:stability1,thm:stability2}. Similar quantities will also be used multiple times in the proof of \eqref{thm:stability3}. Let $\{\mu_n = n\pi\}_{n \in \NN}$ or $\{\mu_n= (n+\frac{1}{2})\pi\}_{n \in \NN}$ and $\{X_n\}_{n\in \NN}$ be solutions to \eqref{1dX}-\eqref{1dX:bc:g4non0} with boundary conditions explicitly given in the proofs. Define $N_p:=\max\{n \in \mathbb{N}_0: \mu_n^2<k^2\}$, $N_c \in \mathbb{N}$ such that $\mu_{N_c}^2=k^2$, $N_e:=\min\{n\in \mathbb{N}_0: \mu_n^{2} > k^2\}$, and
	\begin{align} \nonumber
		\phi_n & :=\|X'_{n}\|^{2}_{0,\mathcal{I}} + (\mu_{n}^{2} + k^{2}) \|X_{n}\|^{2}_{0,\mathcal{I}}, \quad  0 \le n \le N_p,\\
		\label{phinpsin}
		\theta_{N_c} & := \|X'_{N_c}\|^{2}_{0,\mathcal{I}} + (\mu_{N_c}^{2} + k^{2})\|X_{N_c}\|^{2}_{0,\mathcal{I}},\\
		\nonumber
		\psi_n & :=\|X'_{n}\|^{2}_{0,\mathcal{I}} + (\mu_{n}^{2} + k^{2}) \|X_{n}\|^{2}_{0,\mathcal{I}}, \quad n \ge N_e.
	\end{align}
	Also recall that if $a,b\ge 0$, then the following inequality always holds
	 \begin{equation}\label{ineq:ab}
		\sqrt{a^{2}+b^{2}} \le a+b \le \sqrt{2}\sqrt{a^2 + b^2}.
	\end{equation}
	Let $\bchi_A$ denote the indicator function of the set $A$.
	
	\begin{proof}[Proof of \cref{thm:stability1}]
		Given the boundary assumptions, the solution $u$ can be expressed as
		\begin{equation*} 
			u = \sum_{n=0}^{\infty} \wh{g_{4}}(n) X_{n}(x) Y_n(y) \quad \text{with} \quad \wh{g_{4}}(n):=\int_{\Gamma_4}g_{4}(y)Y_{n}(y) dy,
		\end{equation*}
		where $\{X_{n}\}_{n \in \N_{0}}$ are stated in \cref{1dX:sol} and $\{Y_{n}\}_{n \in \N_{0}}$ are stated in \eqref{1dY:sol}.
		
		Recall that  $\gl=\sqrt{\left|1-\frac{\mu_{n}^{2}}{k^2}\right|}$ and observe that $\|Y_n'\|_{0,\mathcal{I}}=\mu_n$. By \eqref{Zn}, since both $\{Y_{n}\}_{n \in \NN}$ and $\{Y_{n}'\}_{n \in \N}$ are orthogonal systems in $L^{2}(\mathcal{I})$, we deduce that
		\begin{align}
			\nonumber
			\|\nabla u \|^{2}_{0,\Omega} & + k^{2} \|u\|^{2}_{0,\Omega}
			\nonumber
			= \left\|\sum_{n=0}^{\infty} \wh{g_{4}}(n) X'_{n} Y_{n}\right\|^{2}_{0,\Omega} + \left\|\sum_{n=0}^{\infty} \wh{g_{4}}(n) X_{n}Y'_{n}\right\|^{2}_{0,\Omega} + k^{2} \left\|\sum_{n=0}^{\infty} \wh{g_{4}}(n) X_{n}Y_{n}\right\|^{2}_{0,\Omega}\\
			& \label{ubg}
			= \sum_{n=0}^{\infty} |\wh{g_{4}}(n)|^{2} \|X'_{n}\|^{2}_{0,\mathcal{I}} + \sum_{n=0}^{\infty} |\wh{g_{4}}(n) \mu_{n}|^{2} \|X_{n}\|^{2}_{0,\mathcal{I}} + k^{2} \sum_{n=0}^{\infty} |\wh{g_{4}}(n)|^{2} \|X_{n}\|^{2}_{0,\mathcal{I}}\\
			& \nonumber
			= \sum_{n=0}^{\infty} |\wh{g_{4}}(n)|^{2} \left(\|X'_{n}\|^{2}_{0,\mathcal{I}} + (\mu_{n}^{2} +k^2) \|X_{n}\|^{2}_{0,\mathcal{I}}\right)
			\le
			\max\left\{\max_{0 \le n \le N_p} \phi_{n},
			\theta_{N_c},
			\max_{n \ge N_e} \psi_{n}\right\}\sum_{n=0}^{\infty} |\wh{g_{4}}(n)|^{2},
		\end{align}
		where $\phi_{n}$, $\theta_{N_c}$, and $\psi_{n}$ are defined as in \eqref{phinpsin}.
		
Case I: suppose $\mathcal{B}_{2}=\frac{\partial}{\partial \nu} - ik\id$. Using item (1) of \cref{1dX:sol}, we obtain
\[ 
\phi_n=\tfrac{(1+\gl^2)-\frac{\sin(2k\gl)}{2k\gl}(1-\gl^2)^2}{(1+\gl^2)^2 - \cos^2(k\gl)(1-\gl^2)^2},
	\quad
\psi_n=\tfrac{\frac{\sinh(2 k \gl)}{2k\gl}2(\gl^2+1)^2+2(\gl^2-1)}{(\gl^2+1)^2(\cosh(2k \gl)-1)+8\gl^2}.
\]
To obtain an upper bound for $\phi_n$, we note that for all $n \le N_p$ and $k \gl \in (0,\tfrac{\pi}{4}]$
\be \label{sincos}
\begin{aligned}
& 1 -k^2\gl^{2} \le \cos^{2}(k\gl) \le 1 -k^2\gl^{2} + \tfrac{1}{3}k^4\gl^{4}, \quad  1-\tfrac{2}{3}k^2\gl^{2} \le \tfrac{\sin(2k\gl)}{2k\gl}, \quad \text{and}\\
& 1 -k^2\gl^{2} + \tfrac{1}{3}k^4 \gl^{4} \le 1-\tfrac{2}{3}k^2\gl^{2} \le 1.
\end{aligned}
\ee
Moreover, for all $0<k<1$,
\be \label{kless1}
\begin{aligned}
N_p = 0 \quad (\text{i.e., }\gl=1) \quad \text{if} \quad \{\mu_n=n\pi\}_{n \in \NN}; \text{ otherwise, } N_p \text{ does not exist if } \{\mu_n=(n+\tfrac{1}{2})\pi\}_{n \in \NN}.
\end{aligned}
\ee
Now, for $k>0$ and $n \le N_p$,
\begin{align*}
	\phi_n & \le \tfrac{1}{2} \bchi_{\{k<1\}} + \tfrac{(1+\gl^2)^2-\left(1-\frac{2}{3}k^2\lambda_{n}^{2}\right)(1-\gl^2)^2}{(1+\gl^2)^2 - \left(1 -k^2\lambda_{n}^{2} + \frac{1}{3}k^4\lambda_{n}^{4}\right)(1-\gl^2)^2} \bchi_{\{k\ge1,0 < \gl \le \frac{\pi}{4k}\}} + \tfrac{(1+\gl^2)+(1-\gl^2)^2}{(1+\gl^2)^2-(1-\gl^2)^2} \bchi_{\{k\ge1, \frac{\pi}{4k} < \gl \le 1\}}\\
	& \le \tfrac{1}{2} \bchi_{\{k<1\}} + \bchi_{\{k\ge1,0 < \gl \le \frac{\pi}{4k}\}} + \left(\tfrac{1}{2 \gl^2} - \tfrac{1}{4} + \tfrac{1}{4}\gl^2\right) \bchi_{\{k\ge1, \frac{\pi}{4k} < \gl \le 1\}}\\
	& \le \tfrac{1}{2} \bchi_{\{k<1\}} + \bchi_{\{k\ge1,0 < \gl \le \frac{\pi}{4k}\}} + \tfrac{8}{\pi^2}k^2 \bchi_{\{k\ge1, \frac{\pi}{4k} < \gl \le 1\}}\\
	& \le \max\{\tfrac{1}{2},1,\tfrac{8}{\pi^2} \} \max\{k^2,1\} \le \max\{k^2,1\},
\end{align*}
where we respectively used \eqref{kless1} and \eqref{sincos} to obtain the first and second terms of the first inequality.
Next, to obtain an upper bound for $\psi_n$, we note that $\frac{\sinh(x)}{x}\le \cosh(x)$ for all $x\in \R$ and so
\[
\psi_n\le F(\gl,z):=
\tfrac{z2(\gl^2+1)^2+2(\gl^2-1)}{(\gl^2+1)^2(z-1)+8\gl^2}
\quad \mbox{with}\quad z:=\cosh(2k\gl).
\]
Then we have $\frac{dF}{d z}=
\frac{2(\gl^2+1)^2 \gl^2(5-\gl^2)}{((z-1)\gl^4+(2z+6)\gl^2+z-1)^2}$.
For $k >0$ and $0< \gl\le \sqrt{5}$, $F$ is increasing to $\lim_{z\to \infty} F(\gl,z)=2$. Since $\{\mu_n=n\pi\}_{n\in \NN}$ or $\{\mu_n=(n+\frac{1}{2})\pi\}_{n\in \NN}$, we also note that
\begin{equation} \label{lbgl}
\eta:=\sqrt{\tfrac{\pi^2}{4}-1} \le \sqrt{\mu_n^2 - k^2} = k \gl \quad \text{for} \quad 0<k<1 \quad \text{and} \quad \forall n \in \NN \quad \text{such that} \quad \mu_n^2 >k^2.
\end{equation}
Now, for $k>0$ and $n \ge N_e$,
\begin{align*}
\psi_n & \le 2 \bchi_{\{k>0,\gl \le \sqrt{5}\}} +  F(\gl,z) \bchi_{\{k>0,\gl > \sqrt{5}\}} \le 2 \bchi_{\{k>0,\gl \le \sqrt{5}\}} +  \tfrac{2(z+1)}{z-1} \bchi_{\{k>0,\gl > \sqrt{5}\}} \\
& \le 2 \bchi_{\{k>0,\gl \le \sqrt{5}\}} +  \tfrac{2(\cosh(2\sqrt{5})+1)}{\cosh(2\sqrt{5})-1}\bchi_{\{k\ge 1,\gl > \sqrt{5}\}} + \tfrac{2(\cosh(2\eta)+1)}{\cosh(2\eta)-1}\bchi_{\{k<1,\gl > \sqrt{5}\}}\\
& \le \max\left\{2,\tfrac{2(\cosh(2\sqrt{5})+1)}{\cosh(2\sqrt{5})-1}, \tfrac{2(\cosh(2\eta)+1)}{\cosh(2\eta)-1}\right\} \le 3.
\end{align*}
Consequently,
\[
	\max\left\{\max_{0 \le n \le N_p} \phi_{n},\theta_{N_c},\max_{n \ge N_e} \psi_{n}\right\} \le \max\left\{\max\{k^2,1\},\tfrac{2k^2+9}{3k^{2}+12},3\right\} \le 3 \max\{k^{2},1\}.
\]
Plugging in the above estimate back into \eqref{ubg}, applying the Parseval's identity, and finally using \eqref{ineq:ab}, we have \eqref{estim:stability1}.
		
Case II: suppose $\mathcal{B}_{2} = \frac{\partial}{\partial \nu}$. Using item (2) of \cref{1dX:sol} with $\alpha=1$, we obtain
\[
	 \phi_{n}=\tfrac{1+(1-\gl^2)\frac{\sin(2k\gl)}{2k\gl}}{(1-\gl^2)\cos^2(k\gl)+\gl^2}, \quad \psi_{n}=\tfrac{2\left(\frac{\sinh(2k\gl)}{2k\gl}(1+\gl^2)+1\right)}{(\gl^2+1)\cosh(2k\gl) + 1-\gl^2}.
\]
Now, for $k>0$ and $n \le N_p$,
\begin{align*}
	\phi_{n} & \le \bchi_{\{k<1\}} + \tfrac{2-\gl^2}{(1-\gl^2)(1-k^2\gl^2)+\gl^2} \bchi_{\{k\ge 1, \gl \le \min\{\frac{1}{\sqrt{2}},\frac{\pi}{4k}\}\}} + \tfrac{2}{\gl^2} \bchi_{\{k\ge 1, \min\{\frac{1}{\sqrt{2}},\frac{\pi}{4k}\} < \gl \le 1 \}}\\
	& \le \bchi_{\{k<1\}} + \tfrac{2}{1+\gl^2(\gl^2-1)k^2} \bchi_{\{k\ge 1, \gl \le \min\{\frac{1}{\sqrt{2}},\frac{\pi}{4k}\}\}} + 4k^2 \bchi_{\{k\ge 1, \min\{\frac{1}{\sqrt{2}},\frac{\pi}{4k}\} < \gl \le 1\}}\\
	& \le \bchi_{\{k<1\}} + \tfrac{2}{1-\frac{k^2}{4}}\bchi_{\{1\le k < \frac{\pi}{2\sqrt{2}}, \gl \le \frac{1}{\sqrt{2}}\}}
	+ \tfrac{512k^2}{256k^2-16\pi^2k^2+\pi^4} \bchi_{\{k\ge \frac{\pi}{2\sqrt{2}},  \gl \le \frac{\pi}{4k}\}}
	+ 4k^2 \bchi_{\{k\ge 1, \min\{\frac{1}{\sqrt{2}},\frac{\pi}{4k}\} < \gl \le 1\}} \\
	& \le \bchi_{\{k<1\}} + \tfrac{64}{32-\pi^2} \bchi_{\{1\le k < \frac{\pi}{2\sqrt{2}}, \gl \le \frac{1}{\sqrt{2}}\}}
	+ \tfrac{512}{256-16\pi^2} \bchi_{\{k\ge \frac{\pi}{2\sqrt{2}},  \gl \le \frac{\pi}{4k}\}}
	+ 4k^2 \bchi_{\{k\ge 1, \min\{\frac{1}{\sqrt{2}},\frac{\pi}{4k}\} < \gl \le 1\}} \\
	& \le \max\{1, \tfrac{64}{32-\pi^2} ,  \tfrac{512}{256-16\pi^2} , 4\} \max\{k^2,1\} \le 6 \max\{k^2,1\},
\end{align*}
where we respectively used \eqref{kless1} and \eqref{sincos} with $k\lambda_{n} \in (0,\min\{\frac{k}{\sqrt{2}},\frac{\pi}{4}\}]$ to obtain the first and second terms of the first inequality.

Next, to obtain an upper bound for $\psi_n$, we note that $\tfrac{x^2}{\cosh(x)} < \tfrac{5}{4}$ and $\frac{\sinh(x)}{x} \le \cosh(x)$ for all $x\in \R$. Now, for $k>0$ and $n \ge N_e$,
\begin{align*}
\psi_n & =  \tfrac{2\left(\frac{\sinh(2k\gl)}{2k\gl}(1+\gl^2)+1-\gl^2\right)}{\cosh(2k\gl)(1+\gl^2) + 1-\gl^2} + \tfrac{2\gl^2}{\gl^2(\cosh(2k\gl)-1) + 1 + \cosh(2k\gl)} \le 2 + \tfrac{2\gl^2}{\gl^2(\cosh(2k\gl)-1) + 1 + \cosh(2k\gl)}\\
& \le 2 + \tfrac{2}{\cosh(2k\gl)-1} \bchi_{\{k < 1\}} + \tfrac{(2k \gl)^2}{2k^2\cosh(2k\gl)} \bchi_{\{k \ge 1\}}\\
& \le 2 + \tfrac{2}{\cosh(2\eta)-1} \bchi_{\{k < 1\}} + \tfrac{5}{8} \bchi_{\{k \ge 1\}}\\
& \le \max\{2, \tfrac{2}{\cosh(2\eta)-1} ,  \tfrac{5}{8} \} \le 3,
\end{align*}
where we used \eqref{lbgl} to arrive at the second term of the third inequality. Consequently,
\[
	\max\left\{\max_{0 \le n \le N_p} \phi_{n},\theta_{N_c},\max_{n \ge N_e} \psi_{n}\right\} \le \max\left\{6\max\{k^2,1\},2,3\right\} = 6 \max\{k^{2},1\}.
\]
Plugging in the above estimate back into \eqref{ubg}, applying the Parseval's identity, and finally using \eqref{ineq:ab}, we have \eqref{estim:stability1}.
		
Case III: suppose $\mathcal{B}_{2} = \id$. This configuration has been studied in \cite{DLS15}, but we include the proof for the sake of completeness. Using item (2) of \cref{1dX:sol} with $\alpha=0$, we obtain
\[
	\phi_{n}= \tfrac{1-(1-\gl^2)\frac{\sin(2k\gl)}{2k\gl}}{1-(1-\gl^2)\cos^2(k\gl)}, \quad
	 \psi_{n}=\tfrac{2\left(\frac{\sinh(2k\gl)}{2k\gl}(1+\gl^2)-1\right)}{(1+\gl^2)\cosh(2k\gl)-(1-\gl^2)}.
\]
Now, for $k>0$ and $n \le N_p$,
\begin{align*}
	\phi_{n} & \le \bchi_{\{k<1\}} + \tfrac{1-(1-\gl^2)\left(1-\frac{2}{3}k^2\gl^2\right)}{1-(1-\gl^2)\left(1-k^2\gl^2+\frac{1}{3}k^4\gl^{4}\right)} \bchi_{\{k\ge1, \gl \le \frac{\pi}{4k}\}}
	+ \tfrac{2}{\gl^2} \bchi_{\{k\ge1, \frac{\pi}{4k} < \gl \le 1\}}\\
	& \le \bchi_{\{k<1\}} + \bchi_{\{k\ge1, \gl \le \frac{\pi}{4k}\}} + \tfrac{32}{\pi^2}k^2  \bchi_{\{k\ge1, \frac{\pi}{4k} < \gl \le 1\}}\\
	&\le \max\{1,\tfrac{32}{\pi^2} \}\max\{k^2,1\} \le 4\max\{k^2,1\},
\end{align*}
where we respectively used \eqref{kless1} and \eqref{sincos} to obtain the first and second terms of the first inequality. Next, for $k>0$ and $n \ge N_e$, we have
\[
\psi_{n} \le \tfrac{2\left(\frac{\sinh(2k\gl)}{2k\gl}(1+\gl^2)-1\right)}{(1+\gl^2)\cosh(2k\gl)-1} \le 2.
\]
Consequently,
\[
	\max\left\{\max_{0 \le n \le N_p} \phi_{n},\theta_{N_c},\max_{n \ge N_e} \psi_{n}\right\} \le \max\left\{ 4 \max\{k^2,1\},\tfrac{2k^2+3}{3k^2+3},2\right\} = 4 \max\{k^{2},1\}.
\]
Plugging in the above estimate back into \eqref{ubg}, applying the Parseval's identity, and finally using \eqref{ineq:ab}, we have \eqref{estim:stability1}.
\end{proof}

\begin{proof}[Proof of \cref{thm:stability2}]
	We shall only focus on items (2) and (3), since the proof of item (1) is identical to the proof of \cref{thm:stability1} (Case I). Given the boundary assumptions, the solution $u$ can be expressed as
	\begin{equation*} 
		u = \sum_{n=0}^{\infty} \wh{g_{2}}(n) X_{n}(x) Y_n(y) \quad \text{with} \quad \wh{g_{2}}(n):=\int_{\Gamma_2}g_{2}(y)Y_{n}(y) dy,
	\end{equation*}
	where $\{X_{n}\}_{n \in \NN}$ are stated in \cref{1dX:sol} and $\{Y_{n}\}_{n \in \N_{0}}$ are stated in \eqref{1dY:sol}.
	
	Recall that  $\gl:=\sqrt{\left|1-\frac{\mu_{n}^{2}}{k^2}\right|}$ and observe that $\|Y_n'\|_{0,\mathcal{I}}=\mu_n$. By \eqref{Zn}, since $\{Y_{n}\}_{n \in \NN}$ and $\{Y'_{n}\}_{n \in \NN}$ are orthogonal systems in $L^{2}(\mathcal{I})$, we deduce that
	\be \label{normXY}
	\begin{aligned}
		\|\nabla u \|^{2}_{0,\Omega} & + k^{2} \|u\|^{2}_{0,\Omega} = \left\|\sum_{n=0}^{\infty}\wh{g_{2}}(n)X'_{n}Y_{n}\right\|^{2}_{0,\Omega} + \left\|\sum_{n=0}^{\infty}\wh{g_{2}}(n)X_{n}Y'_{n}\right\|^{2}_{0,\Omega} + k^{2} \left\|\sum_{n=0}^{\infty}\wh{g_{2}}(n)X_{n}Y_{n}\right\|^{2}_{0,\Omega}\\
		& = \sum_{n=0}^{\infty} |\wh{g_{2}}(n)|^{2} \|X'_{n}\|^{2}_{0,\mathcal{I}} + \sum_{n=0}^{\infty} |\wh{g_{2}}(n) \mu_{n}|^{2} \|X_{n}\|^{2}_{0,\mathcal{I}} + k^{2} \sum_{n=0}^{\infty} |\wh{g_{2}}(n)|^{2} \|X_{n}\|^{2}_{0,\mathcal{I}},
	\end{aligned}
	\ee
	Regrouping the terms, we have
 	\begin{equation}\label{ubginDN}
		\sum_{n=0}^{\infty} |\wh{g_{2}}(n)|^{2} (\|X'_{n}\|^{2}_{0,\mathcal{I}} + (\mu_{n}^{2} + k^{2})\|X_{n}\|^{2}_{0,\mathcal{I}})
		\le
		\max\left\{\max_{0 \le n \le N_p} \phi_{n}, \theta_{N_c},\max_{n \ge N_e} \psi_{n}\right\}\sum_{n=0}^{\infty} |\wh{g_{2}}(n)|^{2},
	\end{equation}
	where $\phi_{n}$, $\theta_{N_c}$, and $\psi_{n}$ are defined as in \eqref{phinpsin}.
		
	Item (2): suppose $\mathcal{B}_{2}=\frac{\partial}{\partial \nu}$. Using item (3) of \cref{1dX:sol} with $\alpha=1$, we obtain
	\[ 
	 \phi_{n}=\tfrac{(1+\lambda_{n}^2) - \frac{\sin(2k\lambda_n)}{2k\lambda_n}(1-\lambda_n^2)^2}{\lambda_{n}^{2}((1 -\lambda_n^2)\cos^2(k\lambda_n) + \lambda_n^2)},
		\quad
	\psi_{n}= \tfrac{2\left((1 + \gl^2)^2 \frac{\sinh(2k\gl)}{2k\gl} + (\gl^{2}-1)\right)}{\gl^2\left((\gl^2 + 1)(\cosh(2k\gl)-1)+2\right)}.
	\]
	First, we note that for $k\ge 1$,
	\[
	\tfrac{d}{d\gl}\Big(2k^2\gl^4 - (4k^2+3)\gl^2+2k^2+9\Big) = 2\gl(4(\gl^2-1)k^2-3) \le 0, \quad \forall \gl \in (0,\sqrt{\tfrac{3}{4k^2}+1}].
	\]
	Now, for $k>0$ and $n \le N_p$,
	\begin{align*}
		\phi_{n} & \le 2\bchi_{\{k<1\}} + \tfrac{(1+\gl^2)-\left(1-\tfrac{2}{3}k^2\gl^2\right)(1-\gl^2)^2}{\gl^2((1-\gl^2)(1-k^2\gl^2)+\gl^2)}\bchi_{\{k\ge1,\gl \le \frac{\pi}{4k}\}} + \tfrac{(1+\gl^2)+(1-\gl^2)^2}{\gl^4}\bchi_{\{k\ge1,\frac{\pi}{4k}<\gl\le 1\}},\\
		& \le 2\bchi_{\{k<1\}} + \tfrac{2k^2\gl^4-(4k^2+3)\gl^2+2k^2+9}{3+3\gl^2(\gl^2-1)k^2} \bchi_{\{k\ge1,\gl \le \frac{\pi}{4k}\}} +
		\left(\tfrac{512}{\pi^4}k^4 - \tfrac{16}{\pi^2}k^2+1\right)
		 \bchi_{\{k\ge1,\frac{\pi}{4k}<\gl\le 1\}}\\
		& \le 2\bchi_{\{k<1\}} +  \tfrac{2k^2+9}{3(1-\gl^2k^2)} \bchi_{\{k\ge1,\gl \le \frac{\pi}{4k}\}} +
		\left(\tfrac{512}{\pi^4}k^4 - \tfrac{16}{\pi^2}k^2+1\right)
		 \bchi_{\{k\ge1,\frac{\pi}{4k}<\gl\le 1\}}\\
		& \le 2\bchi_{\{k<1\}} +  \tfrac{2k^2+9}{3(1-\frac{\pi^2}{16})} \bchi_{\{k\ge1,\gl \le \frac{\pi}{4k}\}} +
		\left(\tfrac{512}{\pi^4}k^4 - \tfrac{16}{\pi^2}k^2+1\right)
		 \bchi_{\{k\ge1,\frac{\pi}{4k}<\gl\le 1\}} \\
		& \le \max\left\{2, \tfrac{11}{3(1-\frac{\pi^2}{16})}, \tfrac{512}{\pi^4} - \tfrac{16}{\pi^2}+1 \right\} \max\{k^4,1\} \le 10 \max\{k^4,1\},
	\end{align*}
	where we respectively used \eqref{kless1} and \eqref{sincos} to obtain the first and second terms of the first inequality. Next, we note that for all $n\ge N_e$ and $k\gl \in (0,1]$,
	\be \label{sinhcosh}
	\tfrac{\sinh(2k\gl)}{2k\gl} \le 1+\tfrac{4}{3}k^2\gl^2 \quad \text{and} \quad 1+2k^2\gl^2\le \cosh(2k\gl).
	\ee
	Now, for $k>0$ and $n \ge N_e$,
	\begin{align*}
		\psi_{n} & \le 	 
		 2\left(\tfrac{(1+\gl^2)\frac{\sinh(2k\gl)}{2k\gl}+1}{\gl^2(\cosh(2k\gl)-1)}\right) \bchi_{\{k<1\} \cup \{k \ge 1, \gl > \frac{1}{k}\}}
		+ \tfrac{(1+\gl^2)^2\left(1+\frac{4}{3}k^2\gl^2\right)+(\gl^2-1)}{\gl^2} \bchi_{\{k \ge 1, \gl \le \frac{1}{k}\}}\\
		 & \le 2\left(\tfrac{k^2}{k^2\gl^2}+1\right)\left(\tfrac{\cosh(2k\gl)+1}{\cosh(2k\gl)-1}\right) \bchi_{\{k<1\} \cup \{k \ge 1, \gl > \frac{1}{k}\}} + \left(\gl^2+3+\tfrac{4}{3}k^2\gl^4+\tfrac{8}{3}k^2\gl^2+\tfrac{4}{3}k^2\right) \bchi_{\{k \ge 1, \gl \le \frac{1}{k}\}}\\
		 & \le 2\left(\tfrac{1}{\eta^2}+1\right) \left(\tfrac{\cosh(2\eta)+1}{\cosh(2\eta)-1}\right) \bchi_{\{k<1\}} + \left(\tfrac{7}{3k^2}+\tfrac{17}{3}+\tfrac{4}{3}k^2\right) \bchi_{\{k \ge 1, \gl \le \frac{1}{k}\}} + \left(2k^2+2\right) \left(\tfrac{\cosh(2)+1}{\cosh(2)-1}\right)\bchi_{\{k \ge 1, \gl > \frac{1}{k}\}}\\
		 & \le \max\left\{ 2\left(\tfrac{1}{\eta^2}+1\right) \left(\tfrac{\cosh(2\eta)+1}{\cosh(2\eta)-1}\right), \tfrac{28}{3},  4 \left(\tfrac{\cosh(2)+1}{\cosh(2)-1}\right)  \right\} \max\{k^2,1\} \le 10 \max\{k^2,1\},
	\end{align*}
	where we used \eqref{sinhcosh} to arrive at the second term of the first inequality and \eqref{lbgl} to arrive at the first term of the third inequality. Consequently,
	\[
		\max\left\{\max_{0 \le n \le N_p} \phi_{n},\theta_{N_c},\max_{n \ge N_e} \psi_{n}\right\} \le \max\left\{10\max\{k^4,1\},\tfrac{2}{3}k^2+3,10\max\{k^2,1\}\right\} = 10 \max\{k^{4},1\}.
	\]
	Plugging in the above estimate back into \eqref{ubginDN}, applying the Parseval's identity, and finally using \eqref{ineq:ab}, we have \eqref{Ck:B4inneum}.
		
	Item (3): suppose $\mathcal{B}_{2}=\id$. First, we note that for $k\ge 1$,
	\[
	 \tfrac{d}{d\gl}\Big(
3+\gl^2(\gl^2-1)k^4+3(1-\gl^2)k^2\Big) = 2\gl ((2\gl^2-1)k^4-3k^2)\le 0, \quad \forall \gl \in (0,\sqrt{\tfrac{3}{2k^2}+\tfrac{1}{2}}].
	\]
	By item (3) of \cref{1dX:sol} with $\alpha=0$, we obtain for $k>0$ and $n \le N_p$,
	\begin{align*}
	\phi_{n} & =\tfrac{k^{2}\left((1+\lambda_{n}^{2}) - (1-\lambda_{n}^2)^2 \frac{\sin(2k\lambda_n)}{2k\lambda_n}\right)}
	 {1-(1-\lambda_{n}^{2})\cos^{2}(k\lambda_{n})}
	\le 2\bchi_{\{k<1\}} + \tfrac{k^2((1+\gl^2)+(1-\gl^2)^2)}{\gl^2}\bchi_{\{k\ge1,\frac{\pi}{4k} < \gl \le 1\}} \\ & \quad + k^2\left(\tfrac{(1-\gl^2)\left(1-(1-\gl^2)\left(1-\frac{2}{3}k^2\gl^2\right)\right)}{1-(1-\gl^2)\left(1-k^2\gl^2+\frac{1}{3}k^4\gl^4\right)} +  \tfrac{2\gl^2}{1-(1-\gl^2)\left(1-k^2\gl^2+\frac{1}{3}k^4\gl^4\right)}\right)\bchi_{\{k\ge1,\gl \le \frac{\pi}{4k}\}} \\
	& \le 2\bchi_{\{k<1\}} + k^2 \left((1-\gl^2)+ \tfrac{6}{3+\gl^2(\gl^2-1)k^4+3(1-\gl^2)k^2}\right) \bchi_{\{k\ge1,\gl \le \frac{\pi}{4k}\}} + k^2\left(\gl^2-1+\tfrac{2}{\gl^2}\right)\bchi_{\{k\ge1,\frac{\pi}{4k} < \gl \le 1\}}\\
	& \le 2\bchi_{\{k<1\}} + k^2 \left(1+\tfrac{1536}{(768-16\pi^2)k^2+(768+\pi^4-48\pi^2)}\right)\bchi_{\{k\ge1,\gl \le \frac{\pi}{4k}\}} + \tfrac{32}{\pi^2}k^4\bchi_{\{k\ge1,\frac{\pi}{4k} < \gl \le 1\}}\\
	& \le \max\left\{2, 1+\tfrac{1536}{(768-16\pi^2)+(768+\pi^4-48\pi^2)}, \tfrac{32}{\pi^2} \right\}\max\{k^4,1\} \le 4\max\{k^4,1\},
	\end{align*}
	where we respectively used \eqref{kless1} and \eqref{sincos} to obtain the first and second terms of the first inequality, and substitute $\gl=\tfrac{\pi}{4k}$ into the second term of the third inequality. Next, for $n \ge N_e$ and $\lambda_{n} \in (0,\infty)$, we have
	\[ 
		k^2 \|X_{n}\|^{2}_{0,\mathcal{I}}
		= k^2\tfrac{\frac{\sinh(2k\gl)}{2k\gl}(1+\gl^2)-(1-\gl^2)}
		 {\cosh(2k\gl)(1+\gl^2)-(1-\gl^2)}
		\le k^2.
	\]
	Next, we note that for all $\gl \in \mathbb{R}$
	\[
	 \tfrac{\frac{\sinh(2k\gl)}{2k\gl}-1}{\cosh(2k\gl)-1} \le \lim_{k\gl \rightarrow 0}\tfrac{\frac{\sinh(2k\gl)}{2k\gl}-1}{\cosh(2k\gl)-1} \le \tfrac{1}{3}, \quad \tfrac{\gl(\frac{3}{2}+\gl^2)}{(1+\gl^2)^{\frac{3}{2}}} \le \lim_{\gl \rightarrow \infty} \tfrac{\gl(\frac{3}{2}+\gl^2)}{(1+\gl^2)^{\frac{3}{2}}}\le 1, \quad \text{and} \quad
	 \tfrac{\gl^{2}}{(1+\gl^2)^{\frac{3}{2}}}\le \tfrac{2\sqrt{3}}{9}.
	\]
	By item (3) of \cref{1dX:sol} with $\alpha=0$, we obtain for $k>0$ and $n \ge N_e$
		\begin{align*}
			& \tfrac{\|X'_{n}\|^{2}_{0,\mathcal{I}} + \mu_{n}^{2} \|X_{n}\|^{2}_{0,\mathcal{I}}}{\mu_{n}}
			= \tfrac{k((1+ 3 \gl^2 + 2\gl^4)\frac{\sinh(2k\gl)}{2k\gl} +\gl^2 -1)}{(1 + \gl^2)^{\frac{3}{2}} \cosh(2k\gl) - (1-\gl^2)(1+\gl^2)^{\frac{1}{2}}} \\
			& \quad \le \tfrac{k((1+ 3 \gl^2 + 2\gl^4)\frac{\sinh(2k\gl)}{2k\gl} +\gl^2 + 1)}{(1 + \gl^2)^{\frac{3}{2}} (\cosh(2k\gl) - 1)} \bchi_{\{k<1\}}
			+ \tfrac{k((1+ 3 \gl^2 + 2\gl^4)\frac{\sinh(2k\gl)}{2k\gl} +\gl^2 -1)}
			{(1 + \gl^2)^{\frac{1}{2}} ((1 + \gl^2)\cosh(2k\gl) - (1-\gl^2))} \bchi_{\{k\ge1,\gl \le \frac{1}{k}\} \cup \{k\ge1,\gl > \frac{1}{k}\}}\\
			& \quad \le \left(\tfrac{(1+3\gl^2+2\gl^4)\sinh(2k\gl)}{2\gl^4(\cosh(2k\gl)-1)} + \tfrac{1}{(1+\gl^2)^{\frac{1}{2}}(\cosh(2k\gl)-1)}\right)\bchi_{\{k<1\}}
			+ \tfrac{k((1+ 3 \gl^2 + 2\gl^4)\left(1+\frac{4}{3}k^2\gl^2\right) +\gl^2 - 1)}{(1 + \gl^2)^{\frac{1}{2}} ((1 + \gl^2)(1 + 2k^2\gl^2) - (1-\gl^2))} \bchi_{\{k\ge1,\gl \le \frac{1}{k}\}}\\
			& \qquad + \left(\tfrac{\left(\frac{\sinh(2k\gl)}{2k\gl}-1\right)}{(1+\gl^2)^{\frac{3}{2}}(\cosh(2k\gl)-1)} k
			+ \left(\tfrac{\gl\left(\frac{3}{2}+\gl^2\right)}{(1+\gl^2)^{\frac{3}{2}}}\right) \left(\tfrac{\sinh(2k\gl)}{\cosh(2k\gl)-1}\right) + \tfrac{\gl^{2} }{(1+\gl^2)^{\frac{3}{2}} (\cosh(2k\gl)-1)} k
			\right) \bchi_{\{k\ge1,\gl > \frac{1}{k}\}}\\
			& \quad \le
			 \left(\left(\tfrac{1}{2\eta^4} +\tfrac{3}{2\eta^2} +1\right)\left(\tfrac{\sinh(2\eta)}{\cosh(2\eta)-1}\right) + \tfrac{1}{\cosh(2\eta)-1}\right) \bchi_{\{k<1\}} +
			 \tfrac{(4\gl^4+6\gl^2+2)k^3+3k(\gl^2+2)}{3+(3\gl^2+3)k^2} \bchi_{\{k\ge1,\gl \le \frac{1}{k}\}}\\
			& \qquad + \left(\tfrac{1}{3}k + \tfrac{\sinh(2)}{\cosh(2)-1} + \tfrac{2\sqrt{3}}{9(\cosh(2)-1)}k\right) \bchi_{\{k\ge1,\gl > \frac{1}{k}\}}\\
			& \quad \le
			\tfrac{(2\eta^4 + 3\eta +1)\sinh(2\eta)+2\eta^4}{2\eta^4 (\cosh(2\eta)-1)} \bchi_{\{k<1\}} +
			 \tfrac{(4\gl^4+6\gl^2+2)k^3+3k(\gl^2+2)}{3k^2} \bchi_{\{k\ge1,\gl \le \frac{1}{k}\}} +  \tfrac{3(\cosh(2)-1)+9\sinh(2)+2\sqrt{3}}{9(\cosh(2)-1)}k \bchi_{\{k\ge1,\gl > \frac{1}{k}\}}\\
			& \quad \le
			\tfrac{(2\eta^4 + 3\eta +1)\sinh(2\eta)+2\eta^4}{2\eta^4 (\cosh(2\eta)-1)} \bchi_{\{k<1\}} +
			 \left(\tfrac{7}{3k^3}+\tfrac{4}{k}+\tfrac{2}{3}k\right) \bchi_{\{k\ge1,\gl \le \frac{1}{k}\}} +  \tfrac{3(\cosh(2)-1)+9\sinh(2)+2\sqrt{3}}{9(\cosh(2)-1)}k \bchi_{\{k\ge1,\gl > \frac{1}{k}\}}\\
			& \quad \le \max\left\{ \tfrac{(2\eta^4 + 3\eta +1)\sinh(2\eta)+2\eta^4}{2\eta^4 (\cosh(2\eta)-1)} ,7, \tfrac{3(\cosh(2)-1)+9\sinh(2)+2\sqrt{3}}{9(\cosh(2)-1)} \right\} \max\{k,1\} \le 7 \max\{k,1\},
		\end{align*}
		where we used \eqref{sinhcosh} to arrive at the last term of the second inequality and \eqref{lbgl} to arrive at the first term of the third inequality. Continuing from \eqref{normXY}, we have
		{\small
		\begin{align*}
			&
			\sum_{n=0}^{\infty} |\wh{g_{2}}(n)|^{2} \|X'_{n}\|^{2}_{0,\mathcal{I}} + \sum_{n=0}^{\infty} |\wh{g_{2}}(n) \mu_{n}|^{2} \|X_{n}\|^{2}_{0,\mathcal{I}} + k^{2} \sum_{n=0}^{\infty} |\wh{g_{2}}(n)|^{2} \|X_{n}\|^{2}_{0,\mathcal{I}}\\
			&
			\quad = \sum_{n=0}^{N_e-1} |\wh{g_{2}}(n)|^{2} \left(\|X'_{n}\|^{2}_{0,\mathcal{I}} + (\mu_{n}^{2}+k^2) \|X_{n}\|^{2}_{0,\mathcal{I}}\right) + \sum_{n=N_e}^{\infty} |\wh{g_{2}}(n)|^{2} \mu_{n} \left(\tfrac{\|X'_{n}\|^{2}_{0,\mathcal{I}} + \mu_{n}^{2} \|X_{n}\|^{2}_{0,\mathcal{I}}}{\mu_{n}}\right) +  \sum_{n=N_e}^{\infty} k^{2}|\wh{g_{2}}(n)|^{2}\|X_{n}\|^{2}_{0,\mathcal{I}}\\
			&
			\quad \le \max\left\{\max_{0\le n \le N_p} \phi_{n},\tfrac{k^{2}(2k^{2}+9)}{3(k^{2}+1)}\right\} \sum_{n=0}^{N_e-1} |\wh{g_{2}}(n)|^{2} + 7 \max\{k,1\} \sum_{n=N_e}^{\infty} |\wh{g_{2}}(n)|^{2} \mu_n + k^{2}\sum_{n=N_e}^{\infty} |\wh{g_{2}}(n)|^{2} \\
			&
			\quad \le 4\max\{k^4,1\} \sum_{n=0}^{\infty} |\wh{g_{2}}(n)|^{2} +7 \max\{k,1\} \sum_{n=0}^{\infty} |\wh{g_{2}}(n)|^{2} \mu_n \\
			&
			\quad \le 7 \left(\max\{k^4,1\} \|g_{2}\|^{2}_{0,\Gamma_{2}} + \max\{k,1\} \|g_{2}\|^{2}_{\mathcal{Z}^{\frac{1}{2}}(\Gamma_{2})}\right),
		\end{align*}
		}	
		where we used our assumptions that $\mathcal{Z}^{\frac{1}{2}}(\Gamma_{2})$ and applied the Parseval's identity to arrive at the last line. Finally by \eqref{ineq:ab}, the stability estimate in \eqref{Ck:B4indiric} is proved.
	\end{proof}

\begin{proof}[Proof of \cref{thm:stabilityf}]
	All three cases start the same way. Letting $v=u$ in \eqref{weakform}, we have
	\be \label{weakuu}
	\| \nabla u\|^{2}_{0,\Omega} - k^2\|u\|_{0,\Omega}^2 - i k \|u\|_{0,\Gamma_R}^2 = \langle f, u\rangle_{\Omega},
	\ee
	where $\Gamma_R=\Gamma_2 \cup \Gamma_4$ if $\mathcal{B}_2=\frac{\partial}{\partial \nu}-ik\id$; otherwise, $\Gamma_R=\Gamma_4$ if $\mathcal{B}_2 \in \{\id, \frac{\partial}{\partial \nu}\}$. Separately considering the real and imaginary parts of \eqref{weakuu}, and applying the Cauchy-Schwarz inequality, we have
	\be \label{ReImmodulus}
	\| \nabla u \|^{2}_{0,\Omega} \le k^{2} \|u\|^{2}_{0,\Omega} + \|f\|_{0,\Omega}\|u\|_{0,\Omega}, \quad k\|u\|^{2}_{0,\Gamma_{R}} \le \|f\|_{0,\Omega}\|u\|_{0,\Omega}.
	\ee
	It is known in \cite[Proposition 2.1]{CF06} that for $u \in H^{2}(\Omega)$ and $\textbf{z} \in (C^{1}(\overline{\Omega}))^{2}$, the following identity holds
	\begin{equation}\label{rellich}
		2 \Re \int_{\Omega} \Delta u (\textbf{z} \cdot \nabla \overline{u}) = 2 \Re \int_{\partial \Omega} \frac{\partial u}{\partial \nu} (\textbf{z} \cdot \nabla \overline{u}) -2 \Re \int_{\Omega} \nabla u \cdot (\nabla \overline{u} \cdot \nabla)\textbf{z} + \int_{\Omega} (\nabla \cdot \textbf{z}) |\nabla u|^{2} - \int_{\partial \Omega} \textbf{z}\cdot \textbf{n} |\nabla u|^{2}.
	\end{equation}
	Take $\textbf{z}=(x-1,0)$. Since $\Delta u= - f- k^{2}u$, $2\Re(u \overline{u}_{x})=(|u|^{2})_{x}$, by applying integration by parts to the left-hand side of \eqref{rellich}, we have
	\[
	2 \Re \int_{\Omega} \Delta u (\textbf{z} \cdot \nabla \overline{u}) = -2\Re \int_{\Omega} (x-1) f \overline{u}_{x} + k^{2} (\|u\|^{2}_{0,\Omega} -\|u\|^{2}_{0,\Gamma_{4}}).
	\]
	Note that $\frac{\partial u}{\partial \nu} =iku$ on $\Gamma_{4}$. If $\mathcal{B}_{1}u=u=0$ on $\Gamma_1$, then $u_{x}(x,0)=(u(x,0))_{x}=0$. Similarly, if $\mathcal{B}_{3}u=u=0$ on $\Gamma_3$, then $u_{x}(x,1)=(u(x,1))_{x}=0$. Therefore, we have
	$2 \Re \int_{\partial \Omega} \frac{\partial u}{\partial \nu} (\textbf{z} \cdot \nabla \overline{u})=2k^{2} \|u\|^{2}_{0,\Gamma_{4}}$ for any $\mathcal{B}_1,\mathcal{B}_3 \in \{\id, \frac{\partial}{\partial \nu}\}$. Hence, fully expanding \eqref{rellich}, we have
	\begin{align*}
		-2\Re \int_{\Omega} (x-1) f \overline{u}_{x} + k^{2} (\|u\|^{2}_{0,\Omega} -\|u\|^{2}_{0,\Gamma_{4}}) & = 2k^{2} \|u\|^{2}_{0,\Gamma_{4}} - 2\|u_{x}\|^{2}_{0,\Omega} + \|\nabla u\|^{2}_{0,\Omega} - \|\nabla u\|^{2}_{0,\Gamma_{4}}.\\
		& = 2k^{2} \|u\|^{2}_{0,\Gamma_{4}} - \|u_{x}\|^{2}_{0,\Omega} + \|u_{y}\|^{2}_{0,\Omega} - \|\nabla u\|^{2}_{0,\Gamma_{4}},
	\end{align*}
	from which, after using \eqref{weakuu} to replace $\|u_{y}\|^{2}_{0,\Omega}$,  we obtain
	\begin{equation}\label{calc:f}
		2\|u_{x}\|^{2}_{0,\Omega} + \|u_{y}\|^{2}_{0,\Gamma_{4}}= \langle f,u\rangle_{\Omega} + ik\|u\|^{2}_{0,\Gamma_{R}} + 2\Re\langle (x-1)f,u_{x}\rangle_{\Omega} + 2k^{2} \|u\|_{0,\Gamma_{4}}^2.
	\end{equation}
	Taking the real part of \eqref{calc:f}, we have
	\be \label{f:uxnorm}
	2\|u_{x}\|^{2}_{0,\Omega} \le \|f\|_{0,\Omega} \|u\|_{0,\Omega} + 2 \|f\|_{0,\Omega} \|u_{x}\|_{0,\Omega}+ 2k^{2}\|u\|^{2}_{0,\Gamma_{4}}
	\ee

	Suppose $\mathcal{B}_2=\id$. Since $\|u\|^{2}_{0,\Omega} \le \|u_{x}\|^{2}_{0,\Omega}$ (which is proved by noting that $u(x,y)=-\int_{x}^{1}u_{x}(s,y)ds$ and then estimating an upper bound), \eqref{f:uxnorm} and the second inequality of \eqref{ReImmodulus} yield $\|u_{x}\|_{0,\Omega} \le \frac{1}{2}(3+2k)\|f\|_{0,\Omega}$. I.e., $\|u\|_{0,\Omega} \le \|u_{x}\|_{0,\Omega} \le \frac{1}{2}(3+2k) \|f\|_{0,\Omega}$. So, by the first inequality of \eqref{ReImmodulus}, we have
		\[
		\| \nabla u \|^{2}_{0,\Omega} + k^{2} \|u\|^{2}_{0,\Omega}\le 2k^{2} \|u\|^{2}_{0,\Omega} + \|f\|_{0,\Omega}\|u\|_{0,\Omega} \le \left(\tfrac{1}{2}k^{2}(3+2k)^2 + \tfrac{1}{2}(3+2k)\right) \|f\|_{0,\Omega}^{2} \le 15 \max\{k^{4},1\} \|f\|_{0,\Omega}^{2}.
		\]
	Finally, by using \eqref{ineq:ab}, we obtain \eqref{estim:withf:nd}.
	
	Suppose $\mathcal{B}_2 = \frac{\partial}{\partial \nu}$. We have $\|u\|_{0,\Omega}^2 \le 2(\|u\|^2_{0,\Gamma_4} + \|u_x\|^2_{0,\Omega}$) (which is proved by noting that $u(x,y)=u(0,y) + \int_{0}^x u_x(s,y)ds$ and then estimating an upper bound). By the second inequality of \eqref{ReImmodulus}, we have
	$
	\|u\|^2_{0,\Omega} - 2k^{-1} \|f\|_{0,\Omega} \|u\|_{0,\Omega} -2 \|u_x\|^2_{0,\Omega} \le 0.
	$
	This implies that
	\be \label{unorm:neum}
	\|u\|_{0,\Omega} \le k^{-1} \|f\|_{0,\Omega} + \tfrac{1}{2} \sqrt{4k^{-2} \|f\|^2_{0,\Omega} + 8 \|u_x\|^2_{0,\Omega}} \le 2k^{-1} \|f\|_{0,\Omega} + \sqrt{2} \|u_x\|_{0,\Omega}.
	\ee
	By \eqref{f:uxnorm}, the second inequality of \eqref{ReImmodulus}, and \eqref{unorm:neum}, we have
	\[
	2\|u_x\|^2_{0,\Omega} \le (2k+1) \|f\|_{0,\Omega} \|u\|_{0,\Omega} + 2\|f\|_{0,\Omega} \|u_x\|_{0,\Omega} \le 2(2+k^{-1}) \|f\|^2_{0,\Omega} + (2\sqrt{2} k + \sqrt{2} + 2) \|f\|_{0,\Omega}\|u_x\|_{0,\Omega}.
	\]
	I.e.,
	$
	2\|u_x\|^2_{0,\Omega} - (2\sqrt{2} k + \sqrt{2} + 2) \|f\|_{0,\Omega}\|u_x\|_{0,\Omega} - 2(2+k^{-1}) \|f\|^2_{0,\Omega} \le 0.
	$ This implies that
	\be \label{uxnorm:neum}
	\begin{aligned}
	\|u_x\|_{0,\Omega} & \le \tfrac{1}{4}(2\sqrt{2} k + \sqrt{2} + 2) \|f\|_{0,\Omega} + \tfrac{1}{4} \sqrt{(2\sqrt{2} k + \sqrt{2} + 2)^2 + 16(2+k^{-1})} \|f\|_{0,\Omega}\\
	& \le (\tfrac{1}{2}(2\sqrt{2} k + \sqrt{2} + 2) + (2+k^{-1})^{1/2})\|f\|_{0,\Omega} \le (\sqrt{2} k + \tfrac{3}{2}\sqrt{2}+1+k^{-1/2})\|f\|_{0,\Omega}.
	\end{aligned}
	\ee
	By the first inequality of \eqref{ReImmodulus}, \eqref{unorm:neum}, and \eqref{uxnorm:neum}, we have
	\[
	\begin{aligned}
		& \| \nabla u \|^{2}_{0,\Omega} + k^{2} \|u\|^{2}_{0,\Omega}  \le 2k^{2} \|u\|^{2}_{0,\Omega} + \|f\|_{0,\Omega}\|u\|_{0,\Omega} \\ 
		& \quad \le 4k^2 (4k^{-2} \|f\|_{0,\Omega}^2 + 2 \|u_x\|_{0,\Omega}^2) + 2k^{-1} \|f\|_{0,\Omega}^2 + \sqrt{2} \|f\|_{0,\Omega}\|u_x\|_{0,\Omega}\\
		& \quad \le 4k^2 (4k^{-2} + 2 (\sqrt{2} k + \tfrac{3}{2}\sqrt{2}+1+k^{-1/2})^2)\|f\|_{0,\Omega}^2 + 2k^{-1} \|f\|_{0,\Omega}^2 + \sqrt{2} (\sqrt{2} k + \tfrac{3}{2}\sqrt{2}+1+k^{-1/2}) \|f\|_{0,\Omega}^2\\
		& \quad \le (155+82\sqrt{2}) \max\{k^4,k^{-1}\} \|f\|_{0,\Omega}^2  \le 271 \max\{k^4,k^{-1}\} \|f\|_{0,\Omega}^2.
	\end{aligned}
	\]
	Finally, by using \eqref{ineq:ab}, we obtain \eqref{estim:withf:rn}.
	
	Suppose $\mathcal{B}_2=\frac{\partial}{\partial \nu}-ik\id$. Keeping in mind that the second inequality of \eqref{ReImmodulus} implies $k\|u\|^{2}_{0,\Gamma_{4}} \le k\|u\|^{2}_{0,\Gamma_{2} \cup \Gamma_{4}}\le \|f\|_{0,\Omega}\|u\|_{0,\Omega}$, the proof of this case is identical to the case where $\mathcal{B}_2=\frac{\partial}{\partial \nu}$.
\end{proof}

In order to prove \cref{traceL2} and \cref{thm:stability3}, we first prove two auxiliary results stated in \cref{lem:tYn,normsaux} below.

\begin{lemma}\label{lem:tYn}
	Let $\tilde{\mu}_n, n\in \NN$ be given in \eqref{allcases} and
	define $\tilde{\lambda}_n:=\sqrt{\left|1-\tfrac{\tilde{\mu}_n^2}{k^2}\right|}$ for $n\in \NN$.
	For $\bc_1=\bc_3$,
	\be \label{gl:lower}
	\inf_{j\in \Z} |\katgl -j\pi|\ge \tfrac{\frac{1}{8}\pi}{1+2\pi^{-1} \katgl},\qquad \forall\; n\in \NN,
	\ee
	and for $\bc_1\ne \bc_3$,
	\be \label{gl:lower:2}
	\inf_{j\in \Z} |\katgl -(j+\tfrac{1}{2})\pi|\ge \tfrac{\frac{1}{8}\pi}{1+2\pi^{-1} \katgl},\qquad \forall\; n\in \NN.
	\ee
\end{lemma}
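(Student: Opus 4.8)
The plan is to reduce both \eqref{gl:lower} and \eqref{gl:lower:2} to a single lower bound on the distance of the squared quantity $t^2:=(\katgl)^2=|k^2-\tilde{\mu}_n^2|$ to a suitable $\pi^2$-spaced lattice, and then to pass from a separation in $t^2$ to a separation in $t$ by factoring a difference of squares. Throughout write $t:=\katgl\ge 0$. Let $M:=\pi\Z$ and $\Lambda:=\pi^2\Z$ when $\bc_1=\bc_3$, and $M:=\pi(\tfrac12+\Z)$ and $\Lambda:=\tfrac14\pi^2+\pi^2\Z$ when $\bc_1\ne\bc_3$; then the left-hand sides of \eqref{gl:lower} and \eqref{gl:lower:2} are exactly $\inf_{p\in M}|t-p|$.

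Let $p\in M$ be the lattice point nearest to $t$ and set $\delta:=t-p$, so that $|\delta|=\inf_{p\in M}|t-p|$ and $|\delta|\le\tfrac{\pi}{2}$ (the spacing of $M$ is $\pi$). The point $p^2$ always lies in $\Lambda$: for $\bc_1=\bc_3$ we have $p=j_0\pi$ with $p^2=j_0^2\pi^2\in\pi^2\Z$, while for $\bc_1\ne\bc_3$ we have $p=(j_0+\tfrac12)\pi$ with $p^2=(j_0^2+j_0+\tfrac14)\pi^2\in\tfrac14\pi^2+\pi^2\Z$. The difference-of-squares identity together with $p\ge 0$ and $p\le t+\tfrac{\pi}{2}$ gives
\[
|t^2-p^2|=|\delta|\,(t+p)\le |\delta|\,\bigl(2t+\tfrac{\pi}{2}\bigr),
\]
and hence, since $p^2\in\Lambda$,
\[
\inf_{p\in M}|t-p|=|\delta|\ge\frac{|t^2-p^2|}{2t+\tfrac{\pi}{2}}\ge\frac{\operatorname{dist}(t^2,\Lambda)}{2t+\tfrac{\pi}{2}}.
\]
It therefore suffices to prove the single claim $\operatorname{dist}(t^2,\Lambda)\ge\tfrac18\pi^2$: granting it, the right-hand side is at least $\frac{\pi^2/8}{2t+\pi/2}=\frac{\pi/4}{1+4\pi^{-1}t}\ge\frac{\pi/8}{1+2\pi^{-1}t}$, which is precisely \eqref{gl:lower}/\eqref{gl:lower:2}.

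The core is thus the claim $\operatorname{dist}(t^2,\Lambda)\ge\tfrac18\pi^2$, which I would verify through the four cases of \eqref{allcases} by reducing everything to $d_0$ and $d_1=\tfrac12\pi^2-d_0$. Since $\tilde{\mu}_n^2\in\pi^2\Z$ when $\tilde{\mu}_n=n\pi$ and $\tilde{\mu}_n^2\in\tfrac14\pi^2+\pi^2\Z$ when $\tilde{\mu}_n=(n+\tfrac12)\pi$, subtracting the integer (resp.\ quarter-integer) part of $\tilde{\mu}_n^2$ turns $\operatorname{dist}(t^2,\Lambda)=\operatorname{dist}\bigl(|k^2-\tilde{\mu}_n^2|,\Lambda\bigr)$ into $\operatorname{dist}(k^2\pm c\pi^2,\pi^2\Z)$ for an explicit $c\in\{0,\tfrac14,\tfrac12\}$. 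For the symmetric lattice $\pi^2\Z$ (the cases $\bc_1=\bc_3$) the sign of $k^2-\tilde{\mu}_n^2$ is immaterial, and one reads off $\operatorname{dist}(t^2,\pi^2\Z)=d_0\ge\tfrac18\pi^2$ when $\tilde{\mu}_n=n\pi$ (the branch forced by $d_0\notin[0,\tfrac18\pi^2]$) and $\operatorname{dist}(t^2,\pi^2\Z)=\operatorname{dist}(k^2-\tfrac14\pi^2,\pi^2\Z)\ge\tfrac18\pi^2$ when $\tilde{\mu}_n=(n+\tfrac12)\pi$ (the branch forced by $d_0\in[0,\tfrac18\pi^2]$), the latter being exactly where the threshold $\tfrac18\pi^2$ enters. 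For the asymmetric lattice $\tfrac14\pi^2+\pi^2\Z$ (the cases $\bc_1\ne\bc_3$) both signs must be retained, and writing $k^2=m\pi^2\pm d_0$ the resulting elementary distances reduce to $|d_0-\tfrac14\pi^2|$, $\min\{d_0+\tfrac14\pi^2,\tfrac34\pi^2-d_0\}$, $d_0$, and $d_1$, each of which is $\ge\tfrac18\pi^2$ precisely under the dichotomy $d_0\in[0,\tfrac18\pi^2]\cup[\tfrac38\pi^2,\tfrac12\pi^2]$ versus $d_0\in(\tfrac18\pi^2,\tfrac38\pi^2)$ encoded in \eqref{allcases}.

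I expect the main obstacle to be the bookkeeping in this final step: confirming that the thresholds $\tfrac18\pi^2$ and $\tfrac38\pi^2$ in \eqref{allcases} are exactly those for which every branch of the elementary distance computation meets the common bound $\tfrac18\pi^2$, while correctly tracking the two signs of $k^2-\tilde{\mu}_n^2$ against the non-symmetry of the quarter-integer lattice. The factorization and the conversion to the stated form are, by contrast, routine once the identity $|t^2-p^2|=|\delta|\,(t+p)$ and the crude estimate $t+p\le 2t+\tfrac{\pi}{2}$ are in place.
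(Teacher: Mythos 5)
Your proposal is correct and follows essentially the same route as the paper: both arguments factor the difference of squares $|(\katgl)^2-p^2|=|\katgl-p|\,(\katgl+p)$, bound the numerator below by $\tfrac18\pi^2$ using exactly the $d_0$/$d_1$ dichotomy built into \eqref{allcases}, and bound the denominator linearly in $\katgl$. The only (harmless) difference is that you work with the single nearest lattice point (giving $\katgl+p\le 2\katgl+\tfrac{\pi}{2}$ and hence a marginally sharper constant before relaxing to the stated bound), whereas the paper checks both neighboring lattice points $j\pi$ and $(j+1)\pi$ with the cruder bound $2\katgl+\pi$; your list of elementary distances $|d_0-\tfrac14\pi^2|$, $\min\{d_0+\tfrac14\pi^2,\tfrac34\pi^2-d_0\}$, $d_0$, $d_1$ matches the paper's case-by-case numerator estimates.
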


	\begin{proof}
		We first consider $\bc_1=\bc_3$. Let $j$ be the unique integer such that
		$j\le \frac{\katgl}{\pi}<j+1$.
Then it is obvious that $\inf_{m\in \Z}|\katgl-m\pi|=
\min(|\katgl-j\pi|, |\katgl-(j+1)\pi)|)$.
		If $d_0\in [0,\frac{1}{8}\pi^2]$, then $-\frac{1}{4}\pi^2 \pm d_0\in [-\frac{3}{8}\pi^2,-\frac{1}{8}\pi^2]$ and $\tilde{\mu}_n=(n+\frac{1}{2})\pi$ by \eqref{allcases}.
		By the definition of $d_0$, we have $k^2=m\pi^2 \pm d_0$ for some $m\in \Z$. By $\tilde{\mu}_n=(n+\tfrac{1}{2})\pi$ in \eqref{allcases} and $(\katgl)^2=|k^2-\tilde{\mu}_n^2|=|(m-n^2-n)\pi^2+(-\frac{1}{4}\pi^2\pm d_0)|$, we have
		\[
		 |\katgl-j\pi|=\tfrac{|(\katgl)^2-j^2\pi^2|}
		{\katgl+j\pi}
		=\tfrac{|(N-j^2)\pi^2\pm (-\frac{1}{4}\pi^2\pm d_0)|}{\katgl+j\pi}
		\ge \tfrac{\frac{1}{8}\pi^2}{\katgl+j\pi}
		\ge \tfrac{\frac{1}{8}\pi}{2\pi^{-1}\katgl+1},
		\]
		where $N:=m-n^2-n$ for $k^2\ge \tilde{\mu}_n^2$ or $N:=n^2+n-m$ for $k^2<\tilde{\mu}_n^2$,
		and
		\[	 |\katgl-(j+1)\pi|=\tfrac{|(\katgl)^2-(j+1)^2\pi^2|}
		 {\katgl+(j+1)\pi}
		 =\tfrac{|((j+1)^2-N)\pi^2\pm (-\frac{1}{4}\pi^2\pm d_0)|}{\katgl+(j+1)\pi}
		\ge \tfrac{\frac{1}{8}\pi}{2\pi^{-1}\katgl+1},
		\]
		where we used $j\pi \le \katgl$ and hence $\katgl+j\pi\le \katgl+(j+1)\pi \le 2\katgl+\pi$.
		
		If $d_0\not \in [0,\frac{1}{8}\pi^2]$, then $d_1=\frac{1}{2}\pi^2-d_0\in [0,\frac{3}{8}\pi^2]$ and hence, $\frac{1}{2}\pi^2 \pm d_1\in [\frac{1}{8}\pi^2,\frac{7}{8}\pi^2]$.
		By the definition of $d_1$, we have
		 $k^2=(m+\frac{1}{2})\pi^2\pm d_1$ for some $m\in \Z$. By $\tilde{\mu}_n=n\pi$ in \eqref{allcases} and $(\katgl)^2=|k^2-\tilde{\mu}_n^2|=|(m-n^2)\pi^2+(\frac{1}{2}\pi^2\pm d_1)|$, we have
		\[
		|\katgl-j\pi|=
		 \tfrac{|(\katgl)^2-j^2\pi^2|}
		{\katgl+j\pi}
		=\tfrac{|(N-j^2)\pi^2\pm (\frac{1}{2}\pi^2\pm d_1)|}{\katgl+j\pi}
		\ge \tfrac{\frac{1}{8}\pi^2}{\katgl+j\pi}
		\ge \tfrac{\frac{1}{8}\pi}{2\pi^{-1}\katgl+1},
		\]
		where $N:=m-n^2$ for $k^2\ge \tilde{\mu}_n^2$ or $N:=n^2-m$ for $k^2<\tilde{\mu}_n^2$,
		and
		\[
		 |\katgl-(j+1)\pi|=\tfrac{|(\katgl)^2-(j+1)^2\pi^2|}
		 {\katgl+(j+1)\pi}
		 =\tfrac{|((j+1)^2-N)\pi^2\pm  (\frac{1}{2}\pi^2 \pm d_1)|}{\katgl+(j+1)\pi}
		\ge \tfrac{\frac{1}{8}\pi}{2\pi^{-1}\katgl+1}.
		\]
		This proves \eqref{gl:lower} for the case $\bc_1=\bc_3$.
		
		We now consider $\bc_1\ne \bc_3$.
		Consider the unique integer $j$ such that $j \le \frac{\katgl}{\pi}+\frac{1}{2}<j+1$. Then it is obvious that $(j-\frac{1}{2})\pi \le \katgl<(j+\frac{1}{2})\pi$ and $\inf_{m\in \Z} |\katgl-(m+\frac{1}{2})\pi|=\min(|\katgl-(j-\frac{1}{2})\pi|,
|\katgl-(j+\frac{1}{2})\pi|)$.
		If $d_0\in [0, \frac{1}{8}\pi^2]\cup [\frac{3}{8}\pi^2,\frac{1}{2}\pi^2]$, then
		$-\frac{1}{4}\pi^2 \pm d_0\in
		 [-\frac{6}{8}\pi^2,-\frac{5}{8}\pi^2]\cup [-\frac{3}{8}\pi^2,-\frac{1}{8}\pi^2]\cup[\frac{1}{8}\pi^2, \frac{2}{8}\pi^2]$.
		By the definition of $d_0$, we have $k^2=m\pi^2 \pm d_0$ for some $m\in \Z$. Therefore, by $\tilde{\mu}_n=n\pi$ in \eqref{allcases} and $(\katgl)^2=|k^2-\tilde{\mu}_n^2|=|(m-n^2)\pi^2 \pm d_0|$, we have
		\[
		 |\katgl-(j-\tfrac{1}{2})\pi|
		 =\tfrac{|(\katgl)^2-(j-\frac{1}{2})^2\pi^2|}
		 {\katgl+(j-\frac{1}{2})\pi}
		=\tfrac{|(N-j^2+j)\pi^2+ (-\frac{1}{4}\pi^2\pm d_0)|}{\katgl+(j-\frac{1}{2})\pi}
		\ge \tfrac{\frac{1}{8}\pi}{2\pi^{-1}\katgl+1},
		\]
		where $N:=m-n^2$ for $k^2\ge \tilde{\mu}_n^2$ or $N:=n^2-m$ for $k^2<\tilde{\mu}_n^2$,
		and
		\[
		 |\katgl-(j+\tfrac{1}{2})\pi|=
		 \tfrac{|(\katgl)^2-(j+\frac{1}{2})^2\pi^2|}
		 {\katgl+(j+\frac{1}{2})\pi}
		=\tfrac{|(j^2+j-N)\pi^2- (-\frac{1}{4}\pi^2\pm d_0)|}{\katgl+(j+\frac{1}{2})\pi}
		\ge \tfrac{\frac{1}{8}\pi}{2\pi^{-1}\katgl+1},
		\]
		where we used $(j-\frac{1}{2})\pi \le \katgl$ and hence $\katgl+(j-\frac{1}{2})\pi\le \katgl+(j+\frac{1}{2})\pi \le 2\katgl+\pi$.
		
		If $d_0\not \in [0, \frac{1}{8}\pi^2]\cup [\frac{3}{8}\pi^2,\frac{1}{2}\pi^2]$, then
		 $d_1=\frac{1}{2}\pi^2-d_0
		\in [\frac{1}{8}\pi^2,\frac{3}{8}\pi^2]$ and consequently, $\pm(\frac{1}{4}\pi^2\pm d_1)\in [-\frac{5}{8}\pi^2,-\frac{3}{8}\pi^2]\cup [-\frac{1}{8}\pi^2,\frac{1}{8}\pi^2]
\cup [\frac{3}{8}\pi^2, \frac{5}{8}\pi^2]$, from which we obtain $\frac{1}{4}\pi^2\pm (\frac{1}{4}\pi^2\pm d_1) \in [-\frac{3}{8}\pi^2,-\frac{1}{8}\pi^2]\cup [\frac{1}{8}\pi^2,\frac{3}{8}\pi^2]\cup [\frac{5}{8}\pi^2,\frac{7}{8}\pi^2]$.
By the definition of $d_1$, we have $k^2=(m+\frac{1}{2})\pi^2 \pm d_1$ for some $m\in \Z$. By $\tilde{\mu}_n=(n+\frac{1}{2})\pi$ in \eqref{allcases} and $(\katgl)^2=|k^2-\tilde{\mu}_n^2|=	 |(m-n^2-n)\pi^2+\frac{1}{4}\pi^2\pm d_1|$, we have
		\[ |\katgl-(j-\tfrac{1}{2})\pi| =\tfrac{|(\katgl)^2-(j-\frac{1}{2})^2\pi^2|} {\katgl+(j-\frac{1}{2})\pi}
		 =\tfrac{|(N-j^2+j)\pi^2-[\frac{1}{4}\pi^2\pm (\frac{1}{4}\pi^2\pm d_1)]|}{\katgl+(j+1/2)\pi}
		\ge \tfrac{\frac{1}{8}\pi}{2\pi^{-1}\katgl+1},
		\]
		where $N:=m-n^2-n$ for $k^2\ge \tilde{\mu}_n^2$ or $N:=n^2+n-m$ for $k^2<\tilde{\mu}_n^2$,
		and
		\[
		 |\katgl-(j+\tfrac{1}{2})\pi|=
		 \tfrac{|(\katgl)^2-(j+\frac{1}{2})^2\pi^2|}
		 {\katgl+(j+\frac{1}{2})\pi}
		 =\tfrac{|(j^2+j-N)\pi^2+[\frac{1}{4}\pi^2\pm (\frac{1}{4}\pi^2\pm d_1)]|}{\katgl+(j+\frac{1}{2})\pi}
		\ge \tfrac{\frac{1}{8}\pi}{2\pi^{-1}\katgl+1}.
		\]
	This proves \eqref{gl:lower:2} for the case $\bc_1\ne \bc_3$.
	\end{proof}
	
\begin{lemma}\label{normsaux}
Consider the problem \eqref{1dYtil}. Define
\[
\tgl:=\sqrt{\left|1-\tfrac{\tilde{\mu}_{n}^{2}}{k^2}\right|} \quad \text{and} \quad
\rtgl:=
\begin{cases}
	\tgl &\text{if }\; \tilde{\mu}_n^2 \le k^2,\\
	i \tgl &\text{if }\; \tilde{\mu}_n^2 > k^2,
\end{cases}
\quad
\forall n \in \NN.
\]
where $\tilde{\mu}_n$ for $n \in \NN$ is given in \eqref{allcases}.
\begin{itemize}
	\item[(1)] Suppose that $\mathcal{B}_{1} \tilde{Y}_n(0)=-\tilde{Y}'_n(0)=1$ and
	$\mathcal{B}_3 \tilde{Y}_n(1) = \alpha\tilde{Y}'_n(1)+(1-\alpha) \tilde{Y}_n(1)=0$ with $\alpha=\{0,1\}$. Then, the solutions $\{\tilde{Y}_n\}_{n \in \NN}$ to the problem \eqref{1dYtil} satisfy
	\[
	\tilde{Y}_{n}(y) =
	\begin{cases}
		 \tfrac{-\alpha\cos(k\rtgl(y-1))-(1-\alpha)\sin(k\rtgl(y-1))}{k\rtgl(\alpha\sin(k\rtgl)+(1-\alpha)\cos(k\rtgl))}, & \text{if} \quad \tilde{\mu}_n^2 \neq k^2,\\
		\alpha(\tfrac{1}{2}y^2-y) -(1-\alpha)(y - 1), & \text{if} \quad \tilde{\mu}_n^2 = k^2.
	\end{cases}
	\]
	If $\mathcal{B}_1 \tilde{Y}_n(0) = -\alpha\tilde{Y}'_n(0)+(1-\alpha) \tilde{Y}_n(0)=0$ and $\mathcal{B}_{3} \tilde{Y}_n(1)=\tilde{Y}'_n(1)=1$ with $\alpha=\{0,1\}$, then the solutions to \eqref{1dYtil} are given above with $y$ replaced by $1-y$. Moreover, for both cases,
	the norms $\|\tilde{Y}_{n}\|_{0,\mathcal{I}}$ and $\|\tilde{Y}'_{n}\|_{0,\mathcal{I}}$ are given by
	\begin{align*}
	& \|\tilde{Y}_{n}\|^{2}_{0,\mathcal{I}}=
	\begin{cases}
		\tfrac{k\rtgl + (-1)^{1-\alpha} \sin(k\rtgl) \cos(k\rtgl)}{2 (k\rtgl)^{3}((1-\alpha)\cos^{2}(k\rtgl) + \alpha \sin^{2}(k\rtgl))}, &
		\text{if} \quad \tilde{\mu}_n^2 \neq k^2,\\
		\tfrac{1}{3} (1-\alpha) + \tfrac{2}{15} \alpha, & \text{if} \quad \tilde{\mu}_n^2 = k^2,
	\end{cases}\\
	& \|\tilde{Y}'_{n}\|^{2}_{0,\mathcal{I}}=
	\begin{cases}
		\tfrac{k\rtgl + (-1)^{\alpha} \sin(k\rtgl) \cos(k\rtgl)}{2 k\rtgl((1-\alpha)\cos^{2}(k\rtgl) + \alpha \sin^{2}(k\rtgl))}, &
		\text{if} \quad \tilde{\mu}_n^2 \neq k^2,\\
		(1-\alpha) + \tfrac{1}{3}\alpha, & \text{if} \quad \tilde{\mu}_n^2 = k^2.
	\end{cases}
	\end{align*}
	\item[(2)]
	Suppose that $\mathcal{B}_{1} \tilde{Y}_n(0)=\tilde{Y}_n(0)=1$ and
	$\mathcal{B}_3 \tilde{Y}_n(1) = \alpha\tilde{Y}'_n(1)+(1-\alpha) \tilde{Y}_n(1)=0$ with $\alpha=\{0,1\}$. Then, the solutions $\{\tilde{Y}_n\}_{n \in \NN}$ to the problem \eqref{1dYtil} satisfy
	\[
		\tilde{Y}_{n}(y) =
		 \tfrac{\alpha\cos(k\rtgl(y-1))+(1-\alpha)\sin(k\rtgl(1-y))}{\alpha\cos(k\rtgl)+(1-\alpha)\sin(k\rtgl)}.
	\]
	If $\mathcal{B}_1 \tilde{Y}_n(0) = -\alpha\tilde{Y}'_n(0)+(1-\alpha) \tilde{Y}_n(0)=0$ and $\mathcal{B}_{3} \tilde{Y}_n(1)=\tilde{Y}_n(1)=1$  with $\alpha=\{0,1\}$, then the solutions to \eqref{1dYtil} are given above with $y$ replaced by $1-y$. Moreover, for both cases,
	the norms $\|\tilde{Y}_{n}\|_{0,\mathcal{I}}$ and $\|\tilde{Y}'_{n}\|_{0,\mathcal{I}}$ are given by
	\[
		 \|\tilde{Y}_{n}\|^{2}_{0,\mathcal{I}} =						 \tfrac{k\rtgl+(-1)^{1-\alpha}\sin(k\rtgl)\cos(k\rtgl)}{2k\rtgl((1-\alpha)\sin^{2}(k\rtgl) + \alpha \cos^{2}(k\rtgl))}, \quad
		 \|\tilde{Y}'_{n}\|^{2}_{0,\mathcal{I}} =
		 \tfrac{k\rtgl((-1)^{\alpha}\sin(k\rtgl)\cos(k\rtgl)+k\rtgl)}{2((1-\alpha)\sin^{2}(k\rtgl)+\alpha\cos^{2}(k\rtgl))}.
	\]
	\end{itemize}
	\end{lemma}
	\begin{proof}
		The above solutions and norms can be obtained from direct calculations (similar to the proof of \cref{1dX:sol}).
	\end{proof}
		
\begin{proof}[Proof of \cref{traceL2}]
	Given that \eqref{allcases} holds, we know by \cref{lem:tYn} that each solution stated in \cref{normsaux} is well defined and hence each term of $\tilde{u}$ in \eqref{auxsol} is well defined. It is also straightforward to see that \eqref{auxsol} satisfies \eqref{helmholtz:aux}.
	
	By \eqref{Zn}, we recall that $\{\tilde{X}_{n}\}_{n \in \N_{0}}$, $\{\tilde{X}'_{n}\}_{n \in \N_{0}}$, $\{\tilde{X}''_{n}\}_{n \in \N_{0}}$ are orthogonal systems in $L^{2}(\mathcal{I})$. Also,  $\|\tilde{X}_n'\|_{0,\mathcal{I}}=\tilde{\mu}_n$ and $\|\tilde{X}_n''\|_{0,\mathcal{I}}=\tilde{\mu}_n^2$.
	Let $S_{M}(\tilde{u})$ denote a partial sum of $\tilde{u}$ with the $M$th term as its last term. Let $\au_x$ be the first partial derivative of $\au$ in the $x$ direction obtained by term-by-term differentiation. Define $\au_y$, $\au_{xx}$, and $\au_{xy}$ similarly.
 	
 	Suppose that $\mathcal{B}_{1} = \id$ and $\mathcal{B}_3 \in \{\id,\tfrac{\partial}{\partial \nu}\}$. We can pick $M \in \N$ such that for all $n \ge M$, we have $\tilde{\mu}_{n}^2 >k^2$, $\coth(k\tgl) \le 2$, and $\frac{1}{4} \le \left|1 - \tfrac{k^2}{\tilde{\mu}_n^2} \right|$ such that item (2) of \cref{normsaux} with $\alpha=0$ (i.e., $\mathcal{B}_3=\id$) implies
 	\[
 	\begin{aligned}
 	& \|\tilde{Y}_n\|_{0,\mathcal{I}}^2 = \tfrac{\sinh(k\tgl)\cosh(k\tgl)-k\tgl}{2 k\tgl \sinh^2(k\tgl)} \le \tfrac{\coth(k\tgl)}{2k\tgl} \le (k\tgl)^{-1} =  \tilde{\mu}_n^{-1} \left|1 - \tfrac{k^2}{\tilde{\mu}_n^2} \right|^{-1/2} \le 2 \tilde{\mu}_n^{-1},\\
 	& \|\tilde{Y}'_n\|_{0,\mathcal{I}}^2 = \tfrac{k\tgl(\sinh(k\tgl)\cosh(k\tgl)+k\tgl)}{2\sinh^2(k\tgl)} \le \tfrac{3}{2} k\tgl = \tfrac{3}{2}\tilde{\mu}_n \left|1 - \tfrac{k^2}{\tilde{\mu}_n^2} \right|^{1/2} \le \tfrac{3}{2}\tilde{\mu}_n,
 	\end{aligned}
 	\]
	and item (2) of \cref{normsaux} with $\alpha=1$ (i.e., $\mathcal{B}_3=\tfrac{\partial}{\partial \nu}$) implies
	\[
	\begin{aligned}
	& \|\tilde{Y}_n\|_{0,\mathcal{I}}^2 =  \tfrac{\sinh(k\tgl)\cosh(k\tgl) + k\tgl}{2 k\tgl \cosh^2(k\tgl)} \le \tfrac{\tanh(k\tgl)}{k\tgl} \le \tilde{\mu}_n^{-1} \left|1 - \tfrac{k^2}{\tilde{\mu}_n^2} \right|^{-1/2} \le 2 \tilde{\mu}_n^{-1}, \\
	& \|\tilde{Y}'_n\|_{0,\mathcal{I}}^2 = \tfrac{k\tgl(\sinh(k\tgl)\cosh(k\tgl)-k\tgl)}{2\cosh^2(k\tgl)} \le \tfrac{1}{2} k\tgl = \tfrac{1}{2}\tilde{\mu}_n \left|1 - \tfrac{k^2}{\tilde{\mu}_n^2} \right|^{1/2} \le \tfrac{1}{2}\tilde{\mu}_n.
	\end{aligned}
	\]
 	That is given $\mathcal{B}_{1} = \id$ and $\mathcal{B}_3 \in \{\id,\tfrac{\partial}{\partial \nu}\}$, $\|\tilde{Y}_{n}\|_{0,\mathcal{I}}^{2} \le (k\tgl)^{-1} \le 2 \tilde{\mu}_{n}^{-1}$ and $\|\tilde{Y}'_{n}\|_{0,\mathcal{I}}^{2} \le k\tgl \le \tfrac{3}{2} \tilde{\mu}_{n}$ for all $n \ge M$.
 	Furthermore,
	\begin{align*}
		& \|\au_{x} - (S_{M}(\au))_x\|_{0,\Omega}
		= \sum_{n=M}^{\infty} |\wh{g}_{1}(n) \tilde{\mu}_{n}|^{2} \|\tilde{Y}_{n}\|^{2}_{0,\mathcal{I}},
		& \|\au_{xx} - (S_{M}(\au))_{xx}\|_{0,\Omega}
		= \sum_{n=M}^{\infty}|\wh{g}_{1}(n) \tilde{\mu}^2_{n}|^{2} \|\tilde{Y}_{n}\|^{2}_{0,\mathcal{I}},\\
		& \|\au_{y} - (S_{M}(\au))_{y}\|_{0,\Omega}
		= \sum_{n=M}^{\infty} |\wh{g}_{1}(n)|^{2} \|\tilde{Y}'_{n}\|^{2}_{0,\mathcal{I}},
		& \|\au_{xy} - (S_{M}(\au))_{xy}\|_{0,\Omega}
		= \sum_{n=M}^{\infty} |\wh{g}_{1}(n) \tilde{\mu}_{n}|^{2} \|\tilde{Y}'_{n}\|^{2}_{0,\mathcal{I}},
	\end{align*}
	Since $g_{1} \in \mathcal{Z}^{\frac{3}{2}}(\Gamma_1)$ for $\mathcal{B}_{1} = \id$, the above inequalities all tend to zero as $M \rightarrow \infty$.
	
	Now suppose that $\mathcal{B}_{1} = \frac{\partial}{\partial \nu}$ and $\mathcal{B}_3 \in \{\id,\tfrac{\partial}{\partial \nu}\}$. We can pick $M \in \NN$ such that for all $n \ge M$, we have $\tilde{\mu}_n^2 > k^2$, $\sin(2k\tgl) \le 2(\cosh(2k\tgl)-1)$, and $2^{-2/3} \le \left|1 - \tfrac{k^2}{\tilde{\mu}_n^2} \right|$ such that item (1) of \cref{normsaux} with $\alpha=0$ (i.e., $\mathcal{B}_3=\id$) implies
	\[
	\begin{aligned}
		& \|\tilde{Y}_n\|^{2}_{0,\mathcal{I}} = \tfrac{\sinh(k\tgl) \cosh(k\tgl)  - k\tgl}{2(k\tgl)^3 \cosh^2(k\tgl)} \le \tfrac{1}{2} (k\tgl)^{-3} \le \tfrac{1}{2}\tilde{\mu}_{n}^{-3} \left|1 - \tfrac{k^2}{\tilde{\mu}_n^2} \right|^{-3/2} \le \tilde{\mu}_{n}^{-3},\\
		& \|\tilde{Y}'_n\|^{2}_{0,\mathcal{I}} = \tfrac{\sinh(k\tgl)\cosh(k\tgl)+k\tgl}{2k\tgl\cosh^2(k\tgl)} \le \tilde{\mu}_{n}^{-1} \left|1 - \tfrac{k^2}{\tilde{\mu}_n^2} \right|^{-1/2} \le 2^{1/3}\tilde{\mu}_{n}^{-1},
	\end{aligned}
	\]
	and item (1) of \cref{normsaux} with $\alpha=1$ (i.e., $\mathcal{B}_3=\tfrac{\partial}{\partial \nu}$) implies
	\[
	\begin{aligned}
		& \|\tilde{Y}_n\|^{2}_{0,\mathcal{I}} = \tfrac{\sinh(k\tgl)\cosh(k\tgl) + k\tgl}{2(k\tgl)^3 \sinh^2(k\tgl)} \le \tfrac{3}{2} (k\tgl)^{-3} \le \tfrac{3}{2} \tilde{\mu}_{n}^{-3} \left|1 - \tfrac{k^2}{\tilde{\mu}_n^2} \right|^{-3/2} \le 3\tilde{\mu}_{n}^{-3},\\
		& \|\tilde{Y}'_n\|^{2}_{0,\mathcal{I}} = \tfrac{\sinh(k\tgl)\cosh(k\tgl)-k\tgl}{2k\tgl\sinh^2(k\tgl)} \le \tilde{\mu}_{n}^{-1} \left|1 - \tfrac{k^2}{\tilde{\mu}_n^2} \right|^{-1/2} \le 2^{1/3}\tilde{\mu}_{n}^{-1}.
	\end{aligned}
	\]
	That is given $\mathcal{B}_{1} = \frac{\partial}{\partial \nu}$ and $\mathcal{B}_3 \in \{\id,\tfrac{\partial}{\partial \nu}\}$, $\|\tilde{Y}_{n}\|_{0,\mathcal{I}}^{2} \le (k\tgl)^{-3} \le 3 \tilde{\mu}_{n}^{-3}$ and $\|\tilde{Y}'_{n}\|_{0,\mathcal{I}}^{2} \le (k\tgl)^{-1} \le 2^{1/3} \tilde{\mu}_{n}^{-1}$ for all $n \ge M$. Since $g_{1} \in \mathcal{Z}^{\frac{1}{2}}(\Gamma_1)$ for $\mathcal{B}_{1} = \frac{\partial}{\partial \nu}$, the above inequalities all tend to zero as $M \rightarrow \infty$. Therefore, in both of the previously discussed cases, we have shown that $\au_{x} \in L^{2}(\Omega)$ and $(S_{M}(\au))_x$ converges to $\au_{x}$ in $L^{2}(\Omega)$; the same implications also hold for the other three cases involving $\au_{xx}$, $\au_{y}$, and $\au_{xy}$.
	
	Clearly, $\au \in H^{1}(\Omega)$. By the trace inequality \cite[Theorem 3.37]{M00}, we have $\|\au\|_{1/2,\partial \Omega} \le C \|\au\|_{1,\Omega} < \infty$ for some constant $C$. Also, by the multiplicative trace inequality \cite[Theorem 1.5.10 and the last inequality of p. 41]{G85}, we have $\|\au_{x}\|_{0,\partial \Omega}^{2} \le C \|\au_{x}\|_{1,\Omega}\|\au_{x}\|_{0,\Omega} < \infty$ for some other constant $C$.
\end{proof}	
	
\begin{proof}[Proof of \cref{thm:stability3}]
	Let  $N_p := \max\{n \in \mathbb{N}_0:\tilde{\mu}_n^2 < k^2\}$, $N_c \in \mathbb{N}$ be such that $\tilde{\mu}_{N_c}^2 = k^2$, and $N_e := \min\{n \in \mathbb{N}:\tilde{\mu}_n^2 > k^2\}$. Recall that  $\tgl:=\sqrt{\left|1-\frac{\tilde{\mu}_{n}^{2}}{k^2}\right|}$ and observe that $\|\tilde{X}_n'\|_{0,\mathcal{I}}=\tilde{\mu}_n$. By \eqref{Zn}, since $\{\tilde{X}_{n}\}_{n \in \N_{0}}$ and $\{\tilde{X}'_{n}\}_{n \in \N_{0}}$ are orthogonal systems in $L^{2}(\mathcal{I})$, we deduce from \eqref{auxsol} that
	\begin{align} \nonumber
		\|\nabla \au \|^{2}_{0,\Omega} & + k^{2} \|\au\|^{2}_{0,\Omega} = \left\|\sum_{n=0}^{\infty} \wh{g_{1}}(n) \tilde{X}'_{n} \tilde{Y}_{n}\right\|^{2}_{0,\Omega} + \left\|\sum_{n=0}^{\infty} \wh{g_{1}}(n) \tilde{X}_{n}\tilde{Y}'_{n}\right\|^{2}_{0,\Omega} + k^{2} \left\|\sum_{n=0}^{\infty} \wh{g_{1}}(n) \tilde{X}_{n}\tilde{Y}_{n}\right\|^{2}_{0,\Omega}\\
		& \le \sum_{n=0}^{\infty} |\wh{g_{1}}(n) \tilde{\mu}_{n}|^{2} \|\tilde{Y}_{n}\|^{2}_{0,\mathcal{I}} + \sum_{n=0}^{\infty} |\wh{g_{1}}(n)|^{2} \|\tilde{Y}'_{n}\|^{2}_{0,\mathcal{I}} + k^{2} \sum_{n=0}^{\infty} |\wh{g_{1}}(n)|^{2} \|\tilde{Y}_{n}\|^{2}_{0,\mathcal{I}}.
	 \label{normXY:tilde}
	\end{align}
	Regrouping the terms, we have
	\begin{equation} \label{ubginaux}
		\sum_{n=0}^{\infty} |\wh{g_{1}}(n)|^{2} (\|\tilde{Y}'_{n}\|^{2}_{0,\mathcal{I}} + (\tilde{\mu}_{n}^{2} + k^{2}) \|\tilde{Y}_{n}\|^{2}_{0,\mathcal{I}})
		\le
		\max\left\{\max_{0 \le n \le N_p} \tilde{\phi}_{n},\tilde{\theta}_{N_c},\max_{n \ge N_e} \tilde{\psi}_{n}\right\}\sum_{n=0}^{\infty} |\wh{g_{1}}(n)|^{2},
	\end{equation}
	where $\tilde{\phi}_{n}$, $\tilde{\theta}_{N_c}$, and $\tilde{\psi}_{n}$ are defined similar to \eqref{phinpsin} with $X_{n}$, $x$, and $\mu_n$ being replaced by $\tilde{Y}_n$, $y$, and $\tilde{\mu}_n$ respectively.
	
	Item (1): Suppose $\mathcal{B}_{1} = \frac{\partial}{\partial \nu}$  and $\mathcal{B}_{3} = \frac{\partial}{\partial \nu}$. Define $z_n:=k\tgl$ for $n \in \NN$. From \eqref{gl:lower}, we observe that if the infimum on the left-hand side of the inequality occurs at $j=0$, then
	\be \label{lbkatgl}
	\tfrac{\pi}{2} > z_n \ge \tfrac{\pi}{8 (1+2\pi^{-1} z_n)}  \ge \tfrac{\pi}{16}, \quad \text{that is,} \quad z_n \ge \tfrac{\pi}{16} \quad \forall n \in \NN,k>0.
	\ee
	Otherwise, if the infimum on the left-hand side of \eqref{gl:lower} occurs at a nonzero $j$, then $z_n \ge \frac{\pi}{2}$. Using item (i) of \cref{normsaux} with $\alpha=1$, we obtain
	\[
			\tilde{\phi}_n=\tfrac{k^2 + (k^2-z_n^2) \tfrac{\sin(2z_n)}{2z_n}}{z_n^2 \sin^{2}(z_n)}, \quad
 		 \tilde{\psi}_n=\tfrac{2((k^2+z_n^2)\frac{\sinh(2z_n)}{2z_n}+ k^2)}{z_n^2\left(\cosh(2z_n)-1\right)}.
	\]
	For each $n \in \NN$, let $\gamma_n=\arginf_{j \in \Z} |z_n - j\pi|$. By \eqref{gl:lower}, we have
	\be \label{lbsin}
	\sin^{2}(z_n) = \sin^2(z_n - \gamma_n \pi) \ge \tfrac{4}{\pi^2} (z_n-\gamma_n\pi)^2  \ge \tfrac{1}{16(1+2\pi^{-1}z_n)^2}, \quad \forall z_n \in [(\gamma_n - \tfrac{1}{2}) \pi,(\gamma_n+\tfrac{1}{2})\pi].
	\ee 
	For $k>0$, $n \le N_p$, and $z_n \in (0,k]$, we have
	\begin{align*}
	\tilde{\phi}_n & \le \tfrac{2k^2 - z_n^2}{z_n^2 \sin^{2}(z_n)} \le 32 \tfrac{k^2}{z_n^2} (1+2\pi^{-1} z_n)^{2} \le 32 k^2\left(\tfrac{1}{z_n^2} + \tfrac{4}{z_n\pi} + \tfrac{4}{\pi^2}\right) \le 32 \left(\tfrac{256}{\pi^2} + \tfrac{64}{\pi^2} + \tfrac{4}{\pi^2}\right)\max\{k^2,1\}\\
	& \le \tfrac{10368}{\pi^2} \max\{k^2,1\} \le 1051 \max\{k^2,1\},
	\end{align*}
	where we used \eqref{lbsin} to arrive at the second inequality, and applied \eqref{lbkatgl} to arrive at the fourth inequality. Next for $k>0$, $n \ge N_e$, and $z_n\in (0,\infty)$, we have
	\begin{align*}
	\tilde{\psi}_n &
	\le 2\left(\tfrac{k^2}{z_n^2}+1\right) \left(\tfrac{\cosh(2z_n)+1}{\cosh(2z_n)-1}\right)
	\le 2  \left(\tfrac{256}{\pi^2}+1\right)\left(\tfrac{\cosh(\tfrac{\pi}{8})+1}{\cosh(\tfrac{\pi}{8})-1}\right) \max\{k^2,1\} \le 1434 \max\{k^2,1\},
	\end{align*}
	where we used \eqref{lbkatgl} to arrive at the second inequality. Consequently,
	\[
	\max\left\{\max_{0 \le n \le N_p} \tilde{\phi}_{n},\tilde{\theta}_{N_c},\max_{n \ge N_e} \tilde{\psi}_{n}\right\} \le  \max\left\{1051 \max\{k^2,1\},\tfrac{4}{15}k^2+\tfrac{1}{3},1434 \max\{k^2,1\}\right\} = 1434 \max\{k^2,1\}.
	\]
	Applying the Parseval's identity to \eqref{ubginaux} and finally using \eqref{ineq:ab}, we have \eqref{Ck:auxsol:neum}.
	
	Item (1): suppose $\mathcal{B}_{1} = \frac{\partial}{\partial \nu}$ and $\mathcal{B}_{3} = \id$. Using item (i) of \cref{normsaux} with $\alpha=0$, we obtain
 \[
 		\tilde{\phi}_n=\tfrac{k^2 -(k^2-z_n^2) \frac{\sin(2z_n)}{2z_n}}{
	 	z_n^2 \cos^{2}(z_n)},
 		\quad
		\tilde{\psi}_n= \tfrac{k^{2} (\sinh(2z_n) -2z_n) + z_n^2 \sinh(2z_n)}{z_n^3(\cosh(2z_n)+1)}.
	\]
	To obtain an upper bound for $\tilde{\phi}_n$, we shall list several observations, which are used in its estimation. For each $n \in \NN$, let $\gamma_n=\arginf_{j \in \Z} |z_n -(j+\tfrac{1}{2})\pi|$. By \eqref{gl:lower:2}, we have
	\be \label{lbcos}
	\cos^2(z_n) = \sin^2(z_n-(\gamma_n+\tfrac{1}{2})\pi) \ge \tfrac{4}{\pi^2} (z_n-(\gamma_n+\tfrac{1}{2})\pi)^2  \ge \tfrac{1}{16(1+2\pi^{-1}z_n)^2}, \quad \forall z_n \in [\gamma_n \pi,(\gamma_n+1)\pi].
	\ee
	Also,
	\begin{align*}
		 \tfrac{d}{dz_n}\left(\tfrac{\left(1-\frac{\sin(2z_n)}{2z_n}\right)}{z_n^2\cos^2(z_n)}\right) & = \tfrac{3\cos(z_n)z_n\left(\frac{\sin(2z_n)}{2z_n}+ \frac{2}{3} z_n \tan(z_n) -1 \right)}{z_n^4 \cos^{3}(z_n)}
		\ge \tfrac{3\left(1-\frac{2}{3}z_n^2 + \frac{2}{3}(z_n^2 + \frac{1}{3}z_n^4)-1\right)}{z_n^3 \cos^{2}(z_n)} = \tfrac{2z_n}{3 \cos^{2}(z_n)} \ge 0, \quad \forall z_n \in (0,1],
	\end{align*}
	where we used \eqref{sincos} and $z_n^2 + \tfrac{1}{3}z_n^4 \le z_n \tan(z_n)$ for all $z_n \in (0,1]$. Now, for $k>0$, $n \le N_p$, and $z_n \in (0,k]$, we have
	\begin{align*}
	\tilde{\phi}_n & \le \left(\tfrac{1-\frac{\sin(2z_n)}{2z_n}}{z_n^2\cos^2(z_n)}k^2 + \tfrac{\sin(2z_n)}{2 z_n \cos^{2}(z_n)}\right) \bchi_{\{z_n \le 1\}} + \tfrac{2}{z_n^2\cos^2(z_n)}k^2 \bchi_{\{1<z_n \le k\}} \\
	& \le \left(\tfrac{1-\frac{\sin(2)}{2}}{\cos^2(1)}k^2 + \tfrac{1}{\cos^{2}(1)}\right) \bchi_{\{z_n \le 1\}} + \tfrac{32(1+2\pi^{-1} z_n)^2}{z_n^2}k^{2} \bchi_{\{1<z_n \le k\}}\\
	& \le \tfrac{4-\sin(2)}{2\cos^2(1)} \max\{k^2,1\} \bchi_{\{z_n \le 1\}} + 32 (1+2\pi^{-1})^2 \max\{k^2,1\}\bchi_{\{1<z_n \le k\}}\le 86 \max\{k^2,1\},
	\end{align*}
	where we used \eqref{lbcos} to arrive at the second term of the first inequality. Next, note that for all $z_n>0$
	{\small
	\begin{align*}
		& \tfrac{d}{dz_n} \left(\tfrac{ (\sinh(2z_n) -2z_n)}{2z_n^3(\cosh(2z_n)+1)}\right) = \tfrac{6 \sinh(z_n) \cosh^{3}(z_n)}{z_n^{4} (\cosh(2z_n)+1)^2} \left(-1 + \tfrac{2z_n^2}{3\cosh^2(z_n)} + \tfrac{2z_n}{\sinh(2z_n)}\right) \\
		& \quad \le \tfrac{6 \sinh(z_n) \cosh^{3}(z_n)}{z_n^{4} (\cosh(2z_n)+1)^2} \left(
		(-1 + \tfrac{2}{3} (z_n^2 - z_n^4 + \tfrac{2}{3} z_n^6) + \left(1 - \tfrac{2}{3} z_n^2 + \tfrac{14}{45} z_n^4\right))\bchi_{\{z_n \le \frac{2}{\sqrt{5}}\}} + \left(-\tfrac{2}{3} + \tfrac{4}{\sqrt{5}\sinh(\tfrac{4}{\sqrt{5}})}\right)\bchi_{\{z_n>\frac{2}{\sqrt{5}}\}}\right)\\
		& \quad \le 0,
	\end{align*}
	}
	where we used $\tfrac{z_n^2}{\cosh^2(z_n)} \le z_n^2 - z_n^4 + \tfrac{2}{3} z_n^6$ and $\tfrac{2z_n}{\sinh(2z_n)} \le 1 - \tfrac{2}{3} z_n^2 + \tfrac{14}{45} z_n^4$ for all $z_n \in (0,\tfrac{2}{\sqrt{5}}]$.
	Now, for $k>0$, $n \ge N_e$, and $z_n \in (0,\infty)$, we have
	\[
	\tilde{\psi}_n \le \lim_{z_n \rightarrow 0} \left(\tfrac{ \sinh(2z_n) -2z_n}{z_n^3(\cosh(2z_n)+1)}k^{2} + \tfrac{\sinh(2z_n)}{z_n(\cosh(2z_n)+1)}\right)
	= \tfrac{2}{3}k^2 + 1 \le 2 \max\{k^2,1\}.
	\]
	Consequently,
	\[
	\max\Big\{\max_{0 \le n \le N_p} \tilde{\phi}_{n},\tilde{\theta}_{N_c},\max_{n \ge N_e} \tilde{\psi}_{n}\Big\} \le
	\max\left\{86 \max\{k^2,1\},\tfrac{2}{3}k^{2}+1,2\max\{k^2,1\}\right\} = 86  \max\{k^2,1\}.
	\]
	Applying the Parseval's identity to \eqref{ubginaux}, and finally using \eqref{ineq:ab}, we have \eqref{Ck:auxsol:neum}.
	
	Item (2): suppose $\mathcal{B}_{1} = \id$ and $\mathcal{B}_{3} = \frac{\partial}{\partial \nu}$. Continuing from \eqref{normXY:tilde}, we have
	\begin{align}
		\nonumber
		& \sum_{n=0}^{\infty} |\wh{g_{1}}(n) \tilde{\mu}_{n}|^{2} \|\tilde{Y}_{n}\|^{2}_{0,\mathcal{I}} + \sum_{n=0}^{\infty} |\wh{g_{1}}(n)|^{2} \|\tilde{Y}'_{n}\|^{2}_{0,\mathcal{I}} + k^{2} \sum_{n=0}^{\infty} |\wh{g_{1}}(n)|^{2} \|\tilde{Y}_{n}\|^{2}_{0,\mathcal{I}} = \sum_{n=N_e}^{\infty} k^{2} |\wh{g_{1}}(n)|^{2}  \|\tilde{Y}_{n}\|^{2}_{0,\mathcal{I}}\\
		\nonumber
		& \qquad + \sum_{n=0}^{N_e-1} |\wh{g_{1}}(n)|^{2} \left( (\tilde{\mu}_{n}^{2}+k^{2}) \|\tilde{Y}_{n}\|^{2}_{0,\mathcal{I}} + \|\tilde{Y}'_{n}\|^{2}_{0,\mathcal{I}} \right) + \sum_{n=N_e}^{\infty} |\wh{g_{1}}(n)|^{2} \tilde{\mu}_{n} \left(\tfrac{ \tilde{\mu}_{n}^{2} \|\tilde{Y}_{n}\|^{2}_{0,\mathcal{I}}  + \|\tilde{Y}'_{n}\|^{2}_{0,\mathcal{I}} }{\tilde{\mu}_{n}}\right)\\
		&
		\label{ubginauxd}
		\quad \le \max\left\{\max_{0 \le n \le N_p} \tilde{\phi}_{n},\tilde{\theta}_{N_c}, k^{2}\max_{n\ge N_e} \|\tilde{Y}_{n}\|^{2}_{0,\mathcal{I}}\right\}\sum_{n=0}^{\infty} |\wh{g_{1}}(n)|^{2} + \max_{n\ge N_e} \left(\tfrac{\tilde{\mu}_{n}^{2} \|\tilde{Y}_{n}\|^{2}_{0,\mathcal{I}}  + \|\tilde{Y}'_{n}\|^{2}_{0,\mathcal{I}}}{\tilde{\mu}_{n}}\right) \sum_{n=0}^{\infty} |\wh{g_{1}}(n)|^{2} \tilde{\mu}_{n}.
	\end{align}
	Now, item (ii) of \cref{normsaux} with $\alpha=1$, we have for $k>0$, $0 \le n \le N_p$, and $z_n \in (0,k]$
	\begin{align*}
		\tilde{\phi}_{n} & = \tfrac{k^{2} +(k^{2}-z_n^2)\frac{\sin(2z_n)}{2z_n}}{ \cos^{2}(z_n)} \le \tfrac{2}{\cos^2(1)} \max\{k^2,1\} \bchi_{\{z_n \le 1\}} + 32k^2(1+2\pi^{-1}z_n)^2 \bchi_{\{1<z_n\le k\}}\\
		&\le \tfrac{2}{\cos^2(1)} \max\{k^2,1\} \bchi_{\{z_n \le 1\}} + 32k^2(1+2\pi^{-1}k)^2 \bchi_{\{1<z_n\le k\}}\\
		& \le \tfrac{2}{\cos^2(1)} \max\{k^2,1\} \bchi_{\{z_n \le 1\}} + 32(1+2\pi^{-1})^2  \max\{k^4,1\} \bchi_{\{1<z_n\le k\}}\\
		& \le \max\left\{ \tfrac{2}{\cos^2(1)} ,  32(1+2\pi^{-1})^2 \right\} \max\{k^4,1\} \le 86\max\{k^4,1\},
	\end{align*}
	where we used \eqref{lbcos} to arrive at the second term of the first inequality.
	Next, for $k>0$, $n\ge N_e$, and $z_n \in (0,\infty)$, we have
$ k^{2}\|\tilde{Y}_{n}\|^{2}_{0,\mathcal{I}} =  \tfrac{\frac{\sinh(2z_n)}{2z_n}+1}{\cosh(2z_n)+1} k^2\le k^2.
$
	Consequently,
	\begin{equation} \label{thm3:3max:item1}
	\max\left\{\max_{0 \le n \le N_p} \tilde{\phi}_{n},\tilde{\theta}_{N_c},k^2\max_{n \ge N_e} \|\tilde{Y}_{n}\|^{2}_{0,\mathcal{I}}\right\} \le
	\max\left\{86 \max\{k^4,1\}, 2k^2, k^{2}\right\} = 86 \max\{k^4,1\}.
	\end{equation}
	Applying the Parseval's identity and using \eqref{ineq:ab}, we have the first term of the right-hand side of \eqref{Ck:auxsol:diric}. Next, we recall that since $\{\tilde{\mu}_n=n\pi\}_{n \in \NN}$ or $\{\tilde{\mu}_n=(n+\frac{1}{2})\pi\}_{n \in \NN}$, the lower bound in \eqref{lbgl} still holds with $\mu_n,\gl$ replaced by $\tilde{\mu}_n,\tgl$ respectively. We also note that for all $k >0$ and $z_n \in (0,\infty)$
	\begin{align*}
	\tfrac{(z_n^2 + k^2)^{\frac{1}{2}}\sinh(z_n)}{z_n \cosh(z_n)} & \le \left(1 + \tfrac{k^{2}}{z_n^2}\right)^{\frac{1}{2}} \tanh(z_n)\bchi_{\{k\ge 1,z_n>1\}\cup \{k< 1,z_n>0\}}
 	+ \left(\tfrac{z_n^2}{k^2} + 1\right)^{\frac{1}{2}}\left(\tfrac{\tanh(z_n)}{z_n}\right)k\bchi_{\{k\ge 1,z_n \le 1\}}\\
 	& \le \sqrt{2} k \bchi_{\{k\ge 1,z_n>1\}} + (1+\tfrac{1}{\eta^2})^{\frac{1}{2}} \bchi_{ \{k< 1,z_n>0\}} + \sqrt{2}k\bchi_{\{k\ge 1,z_n \le 1\}}\\
 	& \le \max\left\{\sqrt{2}, (1+\tfrac{1}{\eta^2})^{\frac{1}{2}} \right\}  \max\{k,1\} \le 2\max\{k,1\},
	\end{align*}
	where we used the lower bound in \eqref{lbgl} to arrive at the second term of the second inequality. Now, for $k>0$ and $n \ge N_e$, we have
	\begin{equation} \label{thm3:term2:item1}
 		\tfrac{\tilde{\mu}_{n}^{2} \|\tilde{Y}_{n}\|^{2}_{0,\mathcal{I}}  + \|\tilde{Y}'_{n}\|^{2}_{0,\mathcal{I}}}{\tilde{\mu}_{n}}
		 =\tfrac{(2z_n^{2} + k^{2}) \sinh(2z_n) + 2 k^{2} z_n}{2z_n
		 (z_n^{2} + k^{2})^{\frac{1}{2}}(\cosh(2z_n) +1)}
		\le \tfrac{(z_n^2 + k^2)^{\frac{1}{2}}\sinh(z_n)}{z_n \cosh(z_n)} + \tfrac{k^2}{(z_n^2 + k^2)^{\frac{1}{2}} (\cosh(2z_n)+1)} \le 3 \max\{k,1\}.
	\end{equation}
	Recall that we assumed $g_{1} \in \mathcal{Z}^{\frac{1}{2}}(\Gamma_1)$. Plugging \eqref{thm3:3max:item1}-\eqref{thm3:term2:item1} into \eqref{ubginauxd} and using \eqref{ineq:ab}, we have the second term of the right-hand side of \eqref{Ck:auxsol:diric}.
	
	Item (2): suppose $\mathcal{B}_{1} = \id$ and $\mathcal{B}_{3} = \id$.
	Now, by item (ii) of \cref{normsaux} with $\alpha=0$, we have for $k>0$, $0 \le n \le N_p$, and $z_n \in (0,k]$
	\begin{align*}
	\tilde{\phi}_n & = \tfrac{k^{2} - (k^{2}-z_n^{2})\frac{\sin(2z_n)}{2z_n}}{\sin^{2}(z_n)} \le \tfrac{2}{\sin^{2}(z_n)}k^2 \le 32 k^2 (1 + 2\pi^{-1} z_n)^2 \le 32k^2 (1 + 2\pi^{-1} k)^2 \\
	& \le  32 (1 + 2\pi^{-1})^2  \max\{k^4,1\} \le 86 \max\{k^4,1\},
	\end{align*}
	where we used \eqref{lbsin} to arrive at the second inequality. Next for $n \ge N_e$ and $z_n > 0$, we have
$ k^{2}\|\tilde{Y}_n\|_{0,\mathcal{I}}^2 \le \tfrac{(\sinh(2z_n)-2z_n)}
	{2z_n(\cosh(2z_n)-1)}k^{2} \le k^2.
$
	Consequently,
	\begin{equation} \label{thm3:3max:item2}
	\max\Big\{\max_{0 \le n \le N_p} \tilde{\phi}_{n},\tilde{\theta}_{N_c},\max_{n \ge N_e} \tilde{\psi}_{n}\Big\} \le
	\max\left\{86 \max\{k^4,1\},\tfrac{2}{3}k^2 + 1,k^2\right\} = 86 \max\{k^4,1\}.
	\end{equation}
	Applying the Parseval's identity and using \eqref{ineq:ab}, we have the first term of the right-hand side of \eqref{Ck:auxsol:diric}. As before, since $\{\tilde{\mu}_n=n\pi\}_{n \in \NN}$ or $\{\tilde{\mu}_n=(n+\frac{1}{2})\pi\}_{n \in \NN}$, the lower bound in \eqref{lbgl} still holds with $\mu_n,\gl$ replaced by $\tilde{\mu}_n,\tgl$ respectively. Now, for $k >0$ and $n \ge N_e$, we have
	\begin{align}
		\nonumber
		& \tfrac{\tilde{\mu}_{n}^{2} \|\tilde{Y}_{n}\|^{2}_{0,\mathcal{I}}  + \|\tilde{Y}'_{n}\|^{2}_{0,\mathcal{I}}}{\tilde{\mu}_{n}}
		=\tfrac{(2z_n^{2} + k^{2}) \sinh(2 z_n) - 2 k^{2} z_n}{2z_n
		 (z_n^{2} + k^{2})^{\frac{1}{2}}(\cosh(2z_n) - 1)}\\
 		\nonumber
 		& \quad \le \tfrac{\left(1 + \tfrac{k^{2}}{z_n^{2}}\right)^{1/2} \sinh(2 z_n)}{(\cosh(2z_n) - 1)} \bchi_{\{k\ge 1, z_n>1\}}
 		+ \tfrac{2k\left(1 + \tfrac{z_n^{2}}{k^{2}}\right)^{1/2} \sinh(2 z_n)}{2z_n(\cosh(2z_n) - 1)} \bchi_{\{k\ge 1, z_n\le1\}} + \tfrac{\left(1 + \tfrac{k^{2}}{z_n^{2}}\right)^{1/2} \sinh(2 z_n)}{(\cosh(2z_n) - 1)} \bchi_{\{k< 1\}}\\
 		& \nonumber
 		\quad \le \tfrac{\sqrt{2}\sinh(2)}{(\cosh(2) - 1)}k\bchi_{\{k\ge 1, z_n>1\}} + \tfrac{2\sqrt{2}\cosh(\frac{\pi}{8})}{(\cosh(\frac{\pi}{8}) - 1)} k \bchi_{\{k\ge 1, z_n \le 1\}}
		+ \tfrac{\left(1 + \eta^{-2}\right)^{1/2} \sinh(2 \eta)}{(\cosh(2\eta) - 1)} \bchi_{\{k< 1\}} \\
		\label{thm3:term2:item2}
		& \quad \le \max\Big\{ \tfrac{\sqrt{2}\sinh(2)}{(\cosh(2) - 1)} ,  \tfrac{2\sqrt{2}\cosh(\frac{\pi}{8})}{(\cosh(\frac{\pi}{8}) - 1)},  \tfrac{\left(1 + \eta^{-2}\right)^{1/2} \sinh(2 \eta)}{(\cosh(2\eta) - 1)}  \Big\} \max\{k,1\} \le 40 \max\{k,1\},
	\end{align}
	where we used \eqref{lbkatgl} and \eqref{lbgl} to arrive at the second and third terms of the second inequality. Recall that we assumed $g_{1} \in \mathcal{Z}^{\frac{1}{2}}(\Gamma_1)$. Plugging \eqref{thm3:3max:item2}-\eqref{thm3:term2:item2} into \eqref{ubginauxd} and using \eqref{ineq:ab}, we have the second term of the right-hand side of \eqref{Ck:auxsol:diric}.
\end{proof}

\end{document}